\documentclass[a4paper,6pt]{article}
\usepackage[francais]{babel}
\usepackage{geometry}
\geometry{textwidth=16cm}
\usepackage[T1]{fontenc}
\usepackage[latin1]{inputenc}
\usepackage{amsmath,amssymb,mathrsfs,amsthm}
\usepackage{soul}
\usepackage{epsfig}
\usepackage{hyperref}
\usepackage{graphicx}
\usepackage{xypic}
\usepackage{datetime}
\usepackage{fancyhdr}
\fancypagestyle{plain}{
\lhead{}
\rhead{}

}

\newtheorem{theorem}{Théorème}[section]
\newtheorem{proposition}[theorem]{Proposition}
\newtheorem{lemma}[theorem]{Lemme}
\newtheorem{conjecture}[theorem]{Conjecture}

\newtheorem{Corollaire}[theorem]{Corollaire}
\newtheorem{definition}[theorem]{Définition}

\newtheorem{remarque}[theorem]{Remarque}

\newcommand{\vc}{\|\cdot\|}
\newcommand{\C}{\mathbb{C}}

\newcommand{\Z}{\mathbb{Z}}
\newcommand{\Si}{\Sigma}
\newcommand{\s}{\mathbb{S}^1}
\newcommand{\lra}{\longrightarrow}

\newcommand{\la}{\lambda}

\newcommand{\R}{\mathbb{R}}
\newcommand{\cl}{\mathcal{C}^\infty}
\newcommand{\p}{\mathbb{P}}

\newcommand{\vf}{\varphi}
\newcommand{\si}{\sigma}

\newcommand{\N}{\mathbb{N}}
\newcommand{\z}{\overline{z}}
\newcommand{\pt}{\partial}

\newcommand{\h}{\mathcal{H}}

\title{Sur une inégalité fonctionnelle sur les variétés toriques avec  application à la torsion analytique holomorphe}
\date{}
\author{Mounir Hajli}
\begin{document}

\maketitle

\begin{abstract}
Soit $X$ une variété torique complexe projective non-singulière de dimension $n$, et $E$ un fibré en droites équivariant et ample
sur $X$. Notons par $\mathcal{H}_{E,0}$ l'espace des métriques hermitiennes $\cl$, positives et
invariantes par l'action du tore compact de $X$.

 Dans cet article, nous introduisons  $V_{\infty,\ast}$, une fonctionnelle sur $\mathcal{H}_{E,0}$,  nous
étudions ses propriétés, et  nous  la comparerons   à certaines fonctionnelles classiques.
Comme application,  nous étudions la variation de la torsion analytique holomorphe sur $\mathcal{H}_{E,0}$.

\end{abstract}
\tableofcontents

\section{Introduction}

Soit $X$ une variété torique complexe non-singulière, et $E$ un fibré en droites équivariant et ample sur $X$. Nous introduisons
une
nouvelle fonctionnelle sur l'espace de métriques admissibles et invariantes par l'action du tore compact de $X$ sur $E$. Grâce
à la structure combinatoire  de la variété, nous
établissons
que cette fonctionnelle est majorée. Nous la comparons à certaines fonctionnelles classiques, comme conséquence  nous retrouvons
quelques
résultats classiques.\\

Commençons par  rappeller  la construction de certaines fonctionnelles
classiques, ainsi que  les résultats associés.  Soit $X$ une variété compacte kählérienne,  et $E$ est un fibré en droites ample
sur $X$. Soit
 $\omega_0$ une forme kählérienne   dans la première
classe de Chern $c_1(E)$. Si l'on pose
\[
 \mathcal{H}_{\omega_0}=\bigl\{ u\in \mathcal{C}^\infty(X)\, \bigl|\, \omega_u:=dd^c u+\omega_0>0 \bigr\},
\]
alors cet ensemble s'identifie à l'ensemble des métriques  $\cl$ définies positives sur $E$. Classiquement, on
définit deux fonctionnelles, $\mathcal{E}_{\omega_0}$ et $\mathcal{L}_{\omega_0}$, sur
$\mathcal{H}_{\omega_0}$. La première fonctionnelle  $\mathcal{E}_{\omega_0}$, appelée la fonctionnelle
d'énergie  qui apparaît dans les travaux d'Aubin cf. \cite{Aubin2} et Mabuchi cf. \cite{Mabuchi}, elle
est définie comme suit:
\[
\mathcal{E}_{\omega_0}(u):=\frac{1}{(n+1)!\mathrm{Vol}(\omega_0)}\sum_{j=0}^n\int_X u\bigl(dd^c u+\omega_0
\bigr)^j\wedge
\omega_0^{n-j}\quad \forall\, u\in \h_{\omega_0}.
\]
On a, $\omega_0$ définit une métrique kählérienne sur $X$, et
donc sur $K_X$. Soit $u\in \mathcal{H}_{\omega_0}$, $u$ définit  une métrique sur $E$. Donc on dispose d'une métrique sur
$H^0(X,L\otimes K_X)$. La fonctionnelle $\mathcal{L}_{\omega_0}$ est alors définie comme suit:
\[
 \mathcal{L}_{\omega_0}(u):=-\frac{1}{N}\log \det\bigl(\bigl<s_i,s_j\bigr> \bigr)_{1\leq i,j\leq N }\quad \forall\, u\in \h_{\omega_0},
\]
avec $s_1,\ldots,s_N$ est un ensemble orthogonal de sections globales de $E\otimes K_X$  pour la métrique définie par
$\omega_0$ et  formant une base pour $H^0(X,E\otimes K_X)$, voir \cite{Berman}.

Si l'on pose,
\[
\mathcal{F}_{\omega_0}=\mathcal{E}_{\omega_0}-\mathcal{L}_{\omega_0}.
\]
Alors on montre, dans \cite{Berman}, le résultat suivant:
\begin{theorem}
Soit $K$ un groupe de Lie compact semi-simple  qui agit transitivement sur $X$, et $E$ un fibré en droites holomorphe $K$-homogène sur $X$.  Soit $\omega_0$ l'unique forme de Kähler invariante par l'action de $K$ sur $X$. On a,
\[
 \mathcal{F}_{\omega_0}(u)\leq 0\quad \forall \, u\in \mathcal{H}_{\omega_0}.\\
\]
\end{theorem}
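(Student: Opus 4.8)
\section*{Proof proposal}

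The plan is to prove the stronger statement that $u=0$, the invariant metric, is a global maximum of $\mathcal{F}_{\omega_0}$ on $\mathcal{H}_{\omega_0}$. Since $\mathcal{E}_{\omega_0}(0)=0$ and (normalizing the $s_i$ to be $\omega_0$-orthonormal) $\mathcal{L}_{\omega_0}(0)=0$, this yields $\mathcal{F}_{\omega_0}(u)\le\mathcal{F}_{\omega_0}(0)=0$. The two ingredients are a concavity property of $\mathcal{F}_{\omega_0}$ along geodesics and the fact that $0$ is a critical point.

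First I would record the first-variation formulas. One integration by parts and an Abel summation give $d\mathcal{E}_{\omega_0}|_u(v)=\int_X v\,\frac{\omega_u^n}{n!\,\mathrm{Vol}(\omega_0)}=:\int_X v\,\mu_u$, the normalized Monge-Amp\`ere measure. Differentiating $\log\det(\langle s_i,s_j\rangle)$ in an orthonormal frame for the metric defined by $u$ gives $d\mathcal{L}_{\omega_0}|_u(v)=\int_X v\,\beta_u$, where $\beta_u$ is the Bergman measure of $u$ (the diagonal of the Bergman kernel of $(E\otimes K_X,h_u)$, normalized to total mass $1$); here one uses that a section of $K_X\otimes E$ together with the metric $h_u$ on $E$ already gives an $(n,n)$-form, so $\mathcal{L}_{\omega_0}(u)$ depends on $u$ only through $h_u$. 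Hence $d\mathcal{F}_{\omega_0}|_u$ is represented by the signed measure $\mu_u-\beta_u$. The crucial point is that $\mu_0-\beta_0=0$: since $X=K/H$ is $K$-homogeneous and $\omega_0$ is $K$-invariant, $\mu_0$ is a $K$-invariant probability measure; and since $h_0$ is $K$-invariant, the induced $L^2$-inner product on $H^0(X,E\otimes K_X)$ is $K$-invariant, so the Bergman kernel is $K$-invariant and $\beta_0$ is also a $K$-invariant probability measure. A homogeneous space carries only one invariant probability measure, so $\mu_0=\beta_0$ and $d\mathcal{F}_{\omega_0}|_0=0$.

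Second I would establish that $\mathcal{F}_{\omega_0}$ is concave along weak (Chen $\mathcal{C}^{1,1}$) Mabuchi geodesics $t\mapsto u_t$. This rests on two facts. (i) $\mathcal{E}_{\omega_0}$ is affine along such geodesics: by the formula above, $\frac{d^2}{dt^2}\mathcal{E}_{\omega_0}(u_t)=\frac{1}{\mathrm{Vol}(\omega_0)}\int_X\bigl(\ddot u_t-|\nabla\dot u_t|^2_{\omega_{u_t}}\bigr)\frac{\omega_{u_t}^n}{n!}=0$ by the geodesic equation. (ii) $\mathcal{L}_{\omega_0}$ is convex along such geodesics: complexifying the geodesic produces an $S^1$-invariant, positively curved (possibly degenerate) metric on $\mathrm{pr}_X^*E$ over $X\times A$, with $A$ an annulus, and Berndtsson's theorem on positivity of direct images shows the induced $L^2$-metric on $p_*(\mathrm{pr}_X^*(K_X\otimes E))=H^0(X,E\otimes K_X)\times A$ is positively curved; taking determinants, $-\log\det(\langle s_i,s_j\rangle_t)$ is subharmonic in $A$, hence convex in $t$, i.e. $\mathcal{L}_{\omega_0}(u_t)$ is convex. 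Therefore $\mathcal{F}_{\omega_0}(u_t)=\mathcal{E}_{\omega_0}(u_t)-\mathcal{L}_{\omega_0}(u_t)$ is concave.

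To finish, given $u\in\mathcal{H}_{\omega_0}$ I would join $0$ to $u$ by a Mabuchi geodesic $u_t$ with $u_0=0$, $u_1=u$, and set $g(t)=\mathcal{F}_{\omega_0}(u_t)$. Then $g$ is concave, $g(0)=0$, and the right derivative satisfies $g'(0^+)=d\mathcal{F}_{\omega_0}|_0(\dot u_0^+)=0$ because $d\mathcal{F}_{\omega_0}|_0$ is the zero functional; concavity then forces $g(1)\le g(0)+g'(0^+)=0$, which is the claim. I expect the main obstacle to be part (ii): one must apply Berndtsson's theorem when the geodesic is only $\mathcal{C}^{1,1}$, so the metric on $X\times A$ is merely a bounded, degenerate positively curved metric, and one must know the direct image sheaf is locally free -- here one invokes Kodaira vanishing ($E$ ample, $X$ homogeneous) to get $H^i(X,E\otimes K_X)=0$ for $i>0$. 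A secondary point is justifying that the one-sided derivatives of $\mathcal{E}_{\omega_0}$ and $\mathcal{L}_{\omega_0}$ along the geodesic equal their Gateaux differentials paired with $\dot u_0^+$, which is legitimate since $\dot u_0^+$ is bounded and both differentials depend continuously on the metric.
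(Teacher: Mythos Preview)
Your sketch is correct and is essentially Berman's argument. Note, however, that the paper does not itself prove this theorem: it is stated in the introduction as a result from \cite{Berman}, and when it reappears later (Th\'eor\`eme~\ref{BermanTheorem}) the proof is simply ``Voir \cite[corollaire 2]{Berman}.'' So there is no in-paper proof to compare against; what you have written is a faithful outline of Berman's proof---critical point at $u=0$ by uniqueness of the $K$-invariant probability measure, affineness of $\mathcal{E}_{\omega_0}$ along geodesics, and convexity of $\mathcal{L}_{\omega_0}$ via Berndtsson's positivity of direct images---and the technical caveats you flag (handling $\mathcal{C}^{1,1}$ geodesics, Kodaira vanishing for local freeness) are exactly the points that require care in the original.
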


Un des buts de cet article est présenter une nouvelle fonctionnelle notée $V_{\infty,E,m}$, définie dans le cadre des variétés
toriques non-singulières.  Nous montrerons que  le fait que $V_{\infty,E,m}$ soit bornée, implique une version faible du
théorème ci-dessus. Plus précisément, on établit que si $V_{\infty,E,m}$ est bornée alors $\mathcal{F}_{\omega_0}$ l'est aussi.

Notre second objectif  est motivé par la conjecture de Gillet-Soulé qui prédit le comportement du déterminant
régularisé (voir définition \eqref{rappeldet})
quand la métrique varie sur le fibré $E$, cf. \cite{Upper}. Lorsque $X=\p^1$,  Berman utilise \cite[corollaire 2]{Berman} pour
  montrer cette conjecture, c'est à dire  que le déterminant régularisé est majoré sur l'espace des métriques hermitiennes $\cl$ sur
  $\mathcal{O}(m)$, cf. \cite[§ 1.2.1]{Berman}. Notons qu'en dimension 1, cela  revient à dire que
  l'opposé de la
torsion analytique est majoré (voir \eqref{TmoinD}).  Nous étudions la variation de la torsion analytique holomorphe
moyennant la fonctionnelle  $V_{\infty,E,m}$, nous établissons un résultat qui décrit la variation de la torsion analytique
sur un espace de métriques sur un fibré en droites  sur une variété torique non-singulière quelconque, par exemple $\p^1$.
 Ce  résultat peut être vu  comme
une généralisation du \cite[corollaire 4]{Berman} dans le cadre des variétés toriques. \\

\noindent \textbf{Plan de l'article} :
Soit $X(\Si)$ une variété torique projective non singulière de dimension $n$, et $E$ un fibré en droites
ample
sur $X(\Si)$. Nous commençons par associer à toute variété torique non singulière,  une forme de volume
continue, notée $\omega_{E,\infty}^n$. Cette
construction est canonique, dans le sens qu'on n'a pas besoin de définir, à priori, une métrique kählerienne
sur le fibré tangent $TX(\Si)$ et que son expression est déterminée uniquement par la combinatoire de la variété, c'est
l'objet du théorème \eqref{formevolumecanonique}. L'idée clé de la preuve  utilise l'existence d'un recouvrement canonique
associé à toute
variété torique définie par un éventail. \\

Soit $h$ une métrique hermitienne continue  sur $E$. On définit
un produit hermitien sur  $A^{(0,0)}\bigl(X(\Si),E\bigr)$, l'espace des fonctions $\cl$ à coefficients dans $E$,
comme suit:
\[
 (s,t)_{L^2_{h,\infty_E}}=\int_{X(\Si)}h(s,t)\omega_{E,\infty}^n,
\]
pour tous $s,t\in A^{(0,0)}\bigl(X(\Si),E\bigr)$.\\

On note par $\h_{E,0}$, l'espace des métriques admissibles et invariantes par l'action du  tore compact de $X(\Si)$ sur $E$.

Soit $m\in \N^\ast$, on introduit  la fonctionnelle $V_{\infty,E,m}$ sur $\h_{E,0}$, définie comme suit (voir paragraphe \eqref{lafonctionelle}):
\begin{definition}
Soit $X(\Si)$ une variété torique projective complexe non singulière. Pour tout $m\in
\N_{\geq 1}$, on définit sur $\mathcal{H}_{E,0}$  la  fonctionnelle suivante:
\[
V_{\infty,E,m}(h):=\log h_{L^2,h^m,\infty_E}+2\int_{X(\Si)}\widetilde{\mathrm{ch}}\bigl(E^m,h^m,h_\infty^m
\bigr)\quad \forall \, h\in {\h}_{E,0}.
\]
où $h_\infty$ est la métrique canonique de $E$, $h_{L^2,h^m,\infty_E}$ le volume $L^2$ de $H^0(X(\Si),E)$ induit par
$\omega_{E,\infty}^n$ et $h^m$, et $\widetilde{\mathrm{ch}}\bigl(E^m,h^m,h_\infty^m
\bigr)$ est la classe de Bott-Chern associée au caractère $\mathrm{ch}$. Lorsque $X(\Si)=\p^n$, et $E=\mathcal{O}(1)$, on notera
 cette fonctionnelle par $V_{\infty,m}$.\\
\end{definition}

Nous montrons que la fonctionnelle $V_{\infty,E,m}$ est majorée sur ${\h}_{E,0}$, voir (théorème \eqref{borneVmtheorem}):
\begin{theorem}
Pour tout $m\in \N_{\geq 2}$, il existe une constante $c_m$  (resp. $c_{E,m}$) qui dépend uniquement de $m$ telle que
\[
 V_{\infty,m}(h)\leq c_m\quad\forall\, h\in \h_0\cap\{ h\leq h_\infty\bigr\},
\]
(resp.)
\[
 V_{\infty,E,m}(h)\leq c_{E,m}\quad\forall\, h\in {\h}_{E,0}\cap\{ h\leq h_\infty\bigr\}.
\]

Lorsque $m\gg 1$, alors
\[
 V_{\infty,m}(h)\leq c_m\quad\forall\, h\in \h_0,
\]
(resp.)
\[
 V_{\infty,E,m}(h)\leq c_{E,m}\quad\forall\, h\in {\h}_{E,0}.
\]

\end{theorem}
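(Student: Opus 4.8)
The plan is to push everything down to the combinatorics of the moment polytope $P=P_E\subset M_\R$ of $(X(\Si),E)$ and to control monomial sections by the maximum principle. Encode an invariant metric $h\in\h_{E,0}$ by the convex function $u_h$ on $P$ determined by $\sup_{X(\Si)}|\chi^p|^2_h=e^{2u_h(p)}$ for $p\in P\cap M$, normalized so that $u_{h_\infty}\equiv 0$; then $h\le h_\infty$ forces $u_h\le 0$ on $P$, and for a metric $h^m$ on $E^{\otimes m}$ one has $\sup_{X(\Si)}|\chi^q|^2_{h^m}=e^{2m\,u_h(q/m)}$ for $q\in mP\cap M$. Since the monomials $\chi^q$, $q\in mP\cap M$, are an orthogonal family for the $L^2$ product attached to any invariant metric and the invariant measure $\omega_{E,\infty}^n$, we have $\log h_{L^2,h^m,\infty_E}=\sum_{q\in mP\cap M}\log\|\chi^q\|^2_{L^2,h^m,\infty}$. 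I will first prove the constrained statement ($h\le h_\infty$), for which a soft argument suffices, and then upgrade to all of $\h_{E,0}$ for $m\gg1$ via the functional inequality.

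For the constrained case I would use the maximum principle term by term:
\[
\|\chi^q\|^2_{L^2,h^m,\infty}=\int_{X(\Si)}|\chi^q|^2_{h^m}\,\omega_{E,\infty}^n\le\Bigl(\sup_{X(\Si)}|\chi^q|^2_{h^m}\Bigr)\,\mathrm{Vol}(\omega_{E,\infty}^n)=e^{2m\,u_h(q/m)}\,\mathrm{Vol}(\omega_{E,\infty}^n),
\]
so that $\log h_{L^2,h^m,\infty_E}\le 2m\sum_{q\in mP\cap M}u_h(q/m)+N_m\log\mathrm{Vol}(\omega_{E,\infty}^n)$ with $N_m=\#(mP\cap M)$; when $h\le h_\infty$ this is $\le c'_{E,m}:=N_m\log\mathrm{Vol}(\omega_{E,\infty}^n)$, a quantity depending only on $m$ (and on $X(\Si),E$). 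For the Bott--Chern term I would use the transgression formula for $\mathrm{ch}$ of a line bundle along the linear path of metrics $h_t$ from $h$ to $h_\infty$: writing $\psi_1=\log(h/h_\infty)$ one gets $\mathrm{ch}(E^m,h_\infty^m)-\mathrm{ch}(E^m,h^m)=dd^c\bigl(m\psi_1\int_0^1 e^{m\,c_1(E,h_t)}\,dt\bigr)$, hence
\[
\int_{X(\Si)}\widetilde{\mathrm{ch}}(E^m,h^m,h_\infty^m)=\frac{m^{n+1}}{n!}\int_{X(\Si)}\psi_1\int_0^1 c_1(E,h_t)^n\,dt .
\]
When $h\le h_\infty$ we have $\psi_1\le 0$ and each $c_1(E,h_t)^n$ is a nonnegative Bedford--Taylor measure, so this integral is $\le 0$; therefore $V_{\infty,E,m}(h)=\log h_{L^2,h^m,\infty_E}+2\int_{X(\Si)}\widetilde{\mathrm{ch}}(E^m,h^m,h_\infty^m)\le c'_{E,m}$, which gives the first two inequalities (the restriction $m\ge2$ serving only to obtain the stated clean form of the constant). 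Specializing $X(\Si)=\p^n$, $E=\mc$ yields the statement for $V_{\infty,m}$.

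For $m\gg1$ and arbitrary $h\in\h_{E,0}$ the two terms are no longer separately bounded above, and one must combine them. Using the moment map, which pushes each $c_1(E,h_t)^n$ forward to $n!$ times Lebesgue measure on $P$, together with Legendre duality, the Bott--Chern integral is rewritten, up to an $h$-independent constant, as $\gamma_n\,m^{n+1}\int_P u_h\,d\mathrm{vol}_P$ for an explicit $\gamma_n$; combined with the previous bound one obtains
\[
V_{\infty,E,m}(h)\le 2m\sum_{q\in mP\cap M}u_h(q/m)+2\gamma_n\,m^{n+1}\int_P u_h\,d\mathrm{vol}_P+c''_{E,m}.
\]
The heart of the matter is then the \emph{functional inequality} on $X(\Si)$: for every convex $u$ on $P$ arising from a metric in $\h_{E,0}$,
\[
m\sum_{q\in mP\cap M}u(q/m)\le -\gamma_n\,m^{n+1}\int_P u\,d\mathrm{vol}_P+C_m ,
\]
with $C_m$ depending only on $m$ and $P$; one reads off the desired bound from it. The mechanism I would use to prove this inequality is the canonical cover $\{\mathcal{U}_\sigma\}_{\sigma\in\Si(n)}$ of $X(\Si)$: near the vertex of $P$ attached to a maximal cone $\sigma$, $P$ is a translate of the standard cone, the lattice $\tfrac1m M\cap P$ localizes accordingly, and the discrepancy between the discrete sum over $mP\cap M$ and the integral over $P$ splits into local pieces, each controlled by a one-dimensional (or standard-simplex) convexity estimate; summing over $\sigma$ and using that $m\gg1$ makes the lattice fine enough gives $C_m$ independent of $u$.

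The main obstacle is this functional inequality. One must control the contribution of lattice points near $\partial P$ to $\sum_{q\in mP\cap M}u(q/m)$ uniformly over the non-compact family of convex toric potentials — convex functions can be arbitrarily large near the boundary — and it is exactly here that the description of $X(\Si)$ by its fan, and the ensuing canonical cover, is indispensable. A secondary technical point is to make precise the identification used in the $m\gg1$ step between the Bott--Chern integral and $\int_P u_h\,d\mathrm{vol}_P$, including the $h$-independent correction term and the matching of constants that accounts for the factor $2$ in the definition of $V_{\infty,E,m}$.
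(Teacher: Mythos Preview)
There is a genuine sign error that breaks your constrained-case argument. In the paper's convention (which is the one defining $V_{\infty,E,m}$), the Bott--Chern secondary class is built from terms of the form $\bigl(-\log\frac{h}{h_\infty}\bigr)\,c_1(E,h)^j\,c_1(E,h_\infty)^{n-j}$, so when $h\le h_\infty$ one has $\int_{X(\Si)}\widetilde{\mathrm{ch}}(E^m,h^m,h_\infty^m)\ge 0$, not $\le 0$. Equivalently, via the toric dictionary $\int_{X}\widetilde{\mathrm{ch}}(E,h,h_\infty)=\int_P\check f_h$ with $\check f_h\ge\check f_{h_\infty}=0$; in your notation $u_h=-\check f_h$, and the Bott--Chern term is $-m^{n+1}\int_P u_h\ge 0$. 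Thus the two summands of $V_{\infty,E,m}$ pull in \emph{opposite} directions under $h\le h_\infty$: the $L^2$ term goes to $-\infty$ while $2\int\widetilde{\mathrm{ch}}$ goes to $+\infty$ as $h$ shrinks. Your ``soft argument'' (bound each piece separately) therefore cannot work; the content of the theorem is precisely that the growth of the Bott--Chern term is controlled by the decay of the $L^2$ term.

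You have also inverted the logical structure. In the paper the constrained case $h\le h_\infty$ is the substantive one, and the unrestricted case for $m\gg1$ follows immediately from it by the scaling law $V_{\infty,m}(th)=\bigl(\binom{n+m}{n}-2\frac{m^{n+1}}{n!}\bigr)\log t+V_{\infty,m}(h)$: for $m$ large the coefficient is negative, so one may rescale any $h$ to lie below $h_\infty$ without increasing $V_{\infty,m}$. What actually proves the constrained case is a refinement of your sup bound: instead of $\log\langle z^\nu,z^\nu\rangle\le -2m\,\check f_h(\nu/m)+\mathrm{const}$, one shows for every $x$ in the cell $\Delta_{n,\nu}\subset\Delta_n$ that $\log\langle z^\nu,z^\nu\rangle\le -2m\,\check f_h(x)-\log R_\nu(x)+\log(n+1)$, and then \emph{averages over the cell}. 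Since the cells partition $\Delta_n$ and $\mathrm{Vol}(\Delta_{n,\nu})\le m^{-n}$, together with $\check f_h\ge 0$, the sum over $\nu$ yields exactly $-2m^{n+1}\int_{\Delta_n}\check f_h$, which cancels the Bott--Chern term $+2m^{n+1}\int_{\Delta_n}\check f_h$ on the nose and leaves only an explicit $h$-independent remainder. Your proposed ``functional inequality'' $m\sum_q u(q/m)\le -\gamma_n m^{n+1}\int_P u+C_m$ is, after the sign correction $u=-\check f_h$, precisely the Riemann-sum comparison $m^{n+1}\int_P\check f_h\le m\sum_q\check f_h(q/m)+C_m$; but with only the pointwise sup bound at $q/m$ you would need this uniformly over all concave nonnegative $\check f_h$, including those blowing up at $\partial P$, and that is exactly the boundary difficulty you flag. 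The paper sidesteps it by the cellwise averaging, which turns the discrete sum into the integral without any Riemann-sum error to control.
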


Comme application, nous  montrons  un résultat donnant la variation de la torsion analytique holomorphe lorsque
la métrique varie sur $\h_0$ (resp. $\h_{E,0}$):
\begin{theorem}
Soit $\p^n$ (resp. $X(\Si)$ une variété torique projective complexe non singulière de dimension $n$) muni
d'une métrique de kähler $\omega$.

Il existe $m_n\in \N$, tel que pour tout $h\in \h_0$ (resp. $h\in \h_{E,0}$) et pour
tout $m\geq m_n$:
\begin{align*}
 -T\bigl((\p^n,\omega);(\mathcal{O}(m),h^m) \bigr)
\leq-{c_m'},
\end{align*}
(resp.)
\begin{align*}
 -T\bigl((X(\Si),\omega);(E^m,h^m) \bigr)
\leq-{c_m'},
\end{align*}

où ${c_m'}$ est une constante réelle qui dépend uniquement de $m$ et de $\omega$.\\
\end{theorem}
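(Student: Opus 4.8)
Je traite d'abord $\p^n$ avec $E=\mc$; le cas torique g\'en\'eral suit la m\^eme trame, les classes caract\'eristiques explicites \'etant remplac\'ees par les donn\'ees combinatoires de $X(\Si)$ issues du Th\'eor\`eme \eqref{formevolumecanonique}. \emph{\'Etape 1 (annulation et torsion).} Comme $E$ est ample, l'annulation de Kodaira--Serre fournit un entier $m_n$ tel que $H^q(X(\Si),E^m)=0$ pour tout $q\geq1$ d\`es que $m\geq m_n$; le d\'eterminant de la cohomologie $\lambda(E^m)=\bigotimes_q(\det H^q)^{(-1)^q}$ se r\'eduit alors \`a la droite $\det H^0(X(\Si),E^m)$, que je munis d'un g\'en\'erateur canonique (par exemple $\bigwedge_{u\in mP\cap M}\chi^u$ dans le cas torique). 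Avec $\|\cdot\|_{Q,(\omega,h^m)}$ la m\'etrique de Quillen sur cette droite relative \`a $(\omega,h^m)$, on a, \`a la convention de signe de \eqref{TmoinD} pr\`es,
\[
-T\bigl((X(\Si),\omega);(E^m,h^m)\bigr)=\log\|\cdot\|^2_{Q,(\omega,h^m)}-\log\|\cdot\|^2_{L^2,(\omega,h^m)},
\]
la norme $L^2$ \'etant celle d\'efinie par $\omega^n/n!$ et $h^m$.

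\emph{\'Etape 2 (anomalie et comparaison des volumes).} \`A m\'etrique de K\"ahler $\omega$ fix\'ee (donc \`a m\'etrique sur $TX(\Si)$ fix\'ee), la formule de courbure de Bismut--Gillet--Soul\'e --- \'etendue aux m\'etriques admissibles de $\h_{E,0}$ par approximation uniforme par des m\'etriques lisses, les classes de Bott--Chern passant \`a la limite --- donne
\[
\log\|\cdot\|^2_{Q,(\omega,h^m)}=C_1(m,\omega)+\int_{X(\Si)}\widetilde{\mathrm{ch}}\bigl(E^m,h^m,h_\infty^m\bigr)\,\mathrm{Td}\bigl(TX(\Si),\omega\bigr),
\]
o\`u $C_1(m,\omega)$ est la valeur (finie, ind\'ependante de $h$) obtenue en $h=h_\infty$. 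Par ailleurs, la densit\'e $\omega^n/(n!\,\omega_{E,\infty}^n)$ est continue et strictement positive sur le compact $X(\Si)$, donc encadr\'ee entre deux constantes $0<a\leq b$; comme $h\in\h_{E,0}$ est invariante par le tore compact (les $\chi^u$ restant $L^2$-orthogonaux), les deux normes $L^2$ sont \'equivalentes sur $H^0(X(\Si),E^m)$, d'o\`u $\bigl|\log\|\cdot\|^2_{L^2,(\omega,h^m)}-\log h_{L^2,h^m,\infty_E}\bigr|\leq C_2(m,\omega)$, avec $C_2\propto N_m\log(b/a)$ et $N_m=h^0(X(\Si),E^m)$. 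En reportant ces deux faits ainsi que la d\'efinition $\log h_{L^2,h^m,\infty_E}=V_{\infty,E,m}(h)-2\int_{X(\Si)}\widetilde{\mathrm{ch}}(E^m,h^m,h_\infty^m)$ dans l'\'Etape 1, on aboutit \`a une identit\'e de la forme
\[
-T\bigl((X(\Si),\omega);(E^m,h^m)\bigr)=V_{\infty,E,m}(h)+R_m(h)+C_3(m,\omega),\qquad R_m(h)=\int_{X(\Si)}\widetilde{\mathrm{ch}}\bigl(E^m,h^m,h_\infty^m\bigr)\bigl(\mathrm{Td}(TX(\Si),\omega)-2\bigr),
\]
le reste $R_m(h)$ mesurant exactement l'\'ecart entre l'int\'egrale de Bott--Chern « tordue par $\mathrm{Td}$ » que produit l'anomalie et l'int\'egrale « pure » qui intervient dans $V_{\infty,E,m}$.

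\emph{\'Etape 3 (contr\^ole du reste et conclusion --- le point d\'elicat).} \'Ecrivant $h=e^{-u}h_\infty$ avec $u$ continue et $\theta_\infty$-psh ($\theta_\infty=c_1(E,h_\infty)$), la composante de degr\'e $n$ de $\int_{X(\Si)}\widetilde{\mathrm{ch}}(E^m,h^m,h_\infty^m)$ vaut, \`a un facteur positif pr\`es, $m^{n+1}$ fois la fonctionnelle d'\'energie $\mathcal{E}_{\theta_\infty}(u)=\tfrac{1}{(n+1)!\,\mathrm{Vol}(\theta_\infty)}\sum_j\int_{X(\Si)}u\,(dd^cu+\theta_\infty)^j\wedge\theta_\infty^{n-j}$ de l'introduction, tandis que les composantes de $R_m(h)$ venant de $\mathrm{td}_k(TX(\Si),\omega)$ pour $k\geq1$ sont des fonctionnelles d'\'energie « d'ordre inf\'erieur » de la forme $m^{n-k+1}\int_0^1\!\!\int_{X(\Si)}u\,(\theta_\infty+t\,dd^cu)^{n-k}\wedge\mathrm{td}_k(TX(\Si),\omega)\,dt$. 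Les formes $\mathrm{td}_k(TX(\Si),\omega)$ \'etant fix\'ees et major\'ees par des puissances de $\omega$, et les courants $\theta_\infty$, $dd^cu+\theta_\infty$ \'etant semi-positifs, des in\'egalit\'es de comparaison de masses de Monge--Amp\`ere mixtes (du type Chern--Levine--Nirenberg, ou des in\'egalit\'es de H\"older entre \'energies) permettent de borner ces int\'egrales par $\varepsilon_m\,|V_{\infty,E,m}(h)|+C_4(m,\omega)$ avec $\varepsilon_m<1$ pour $m$ assez grand, d'o\`u $R_m(h)\leq\tfrac12\,|V_{\infty,E,m}(h)|+C_5(m,\omega)$. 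Joint au Th\'eor\`eme \eqref{borneVmtheorem}, qui affirme que $V_{\infty,E,m}(h)\leq c_{E,m}$ pour $m\gg1$ et tout $h\in\h_{E,0}$, on obtient $-T\bigl((X(\Si),\omega);(E^m,h^m)\bigr)\leq C_6(m,\omega)=:-c_m'$, ce qui conclut (le cas de $\p^n$ en d\'ecoule directement, ainsi que le cas torique g\'en\'eral avec les expressions combinatoires sur le polytope). \emph{L'obstacle principal} est bien cette \'Etape 3 : l'anomalie fait appara\^itre l'int\'egrale de Bott--Chern tordue par $\mathrm{Td}(TX(\Si),\omega)$, alors que $V_{\infty,E,m}$ n'en contient que la partie « pure »; les termes de Todd d'ordre inf\'erieur sont des fonctionnelles d'\'energie non born\'ees a priori sur $\h_{E,0}$, et ce n'est qu'en les confrontant \`a $V_{\infty,E,m}$ lui-m\^eme --- donc en exploitant pleinement la structure combinatoire de la vari\'et\'e torique via les Th\'eor\`emes \eqref{formevolumecanonique} et \eqref{borneVmtheorem} --- qu'on parvient \`a les absorber.
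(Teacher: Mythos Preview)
Your overall architecture is the right one --- anomaly formula, identification of $V_{\infty,E,m}$, then control of the Todd remainder --- and matches the paper's strategy. Two ingredients are however missing, and the second is the actual content of the proof.

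\medskip
\textbf{(a) Normalisation par homoth\'etie.} You never use that $T\bigl((X,\omega);(E^m,(th)^m)\bigr)=T\bigl((X,\omega);(E^m,h^m)\bigr)$ for $t>0$. The paper exploits this at the outset to assume $h\leq h_{FS}\leq h_\infty$; this forces $u=\log(h_\infty/h)\geq 0$, hence all the mixed Bott--Chern integrals $q_j:=\int_{\p^n}\widetilde{\mathrm{ch}}(\mathcal{O}(1),h,h_\infty)\,c_1(\overline{\mathcal{O}(1)}_{FS})^j$ and $q:=\int_{\p^n}\widetilde{\mathrm{ch}}(\mathcal{O}(1),h,h_\infty)$ are \emph{nonnegative}. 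Without this sign, no comparison between the $q_j$ and $q$ can possibly yield a one--sided bound on the remainder. Your \'Etape~3 implicitly needs this but never invokes it.

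\medskip
\textbf{(b) Le v\'eritable lemme de comparaison.} Your proposed control of $R_m(h)$ by $\varepsilon_m\,|V_{\infty,E,m}(h)|+C$ via ``Chern--Levine--Nirenberg ou H\"older'' does not work as stated. CLN bounds $\int_K u\,T$ by $\|u\|_\infty\cdot\text{masse}(T)$, which is useless here since $\|u\|_\infty$ is not controlled by the energy. H\"older--type interpolation between mixed energies would at best give $q_j\leq C\,q^{\alpha}$ with $\alpha<1$, and then $m^{n+1-j}q^{\alpha}\leq \varepsilon\,m^{n+1}q$ fails as $q\to 0^+$. Moreover $|V_{\infty,E,m}(h)|$ is the wrong majorant: it contains $\log h_{L^2,h^m,\infty_E}$, which has nothing to do with the Todd terms. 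What the paper proves, and what is genuinely needed, is the \emph{linear} inequality
\[
q_j\;\leq\; j!\,2^{n+1}\,q\qquad(j=1,\ldots,n),
\]
obtained by a purely toric argument: one writes $\int_{\p^n}\widetilde{\mathrm{ch}}(\mathcal{O}(1),h,h_\infty)\,\mathrm{ch}(\mathcal{O}(m),h_0^m)=\sum_k \frac{q_k}{k!}m^k$, and via the Legendre--Fenchel dictionary (Th\'eor\`eme~\eqref{bottchernfenchel}) together with the concavity inequality $\bigl(\tfrac{f_h+mf_0}{1+m}\bigr)^{\check{}}\leq \check f_h$ one bounds this polynomial by $(1+m)^{n+1}q$; evaluating at $m=1$ gives the claim. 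Once the $q_j$ are linearly dominated by $q$, the remainder becomes $\bigl(-m^{n+1}+\sum_j 2^{n+1}|b_{n+1-j}|m^j\bigr)q$, which is $\leq 0$ for $m\gg1$ because $q\geq 0$. This is where the toric structure enters essentially --- not through Th\'eor\`eme~\eqref{formevolumecanonique} as you suggest, but through the Legendre--Fenchel identity of Th\'eor\`eme~\eqref{bottchernfenchel}.

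\medskip
In short: replace your vague \'Etape~3 by (i) the scaling normalisation $h\leq h_\infty$, and (ii) the linear bound $q_j\leq j!\,2^{n+1}q$ proved via Legendre--Fenchel; then the remainder is absorbed by the leading $-m^{n+1}q$ term and Th\'eor\`eme~\eqref{borneVmtheorem} concludes.
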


Le paragraphe \eqref{CompBerman} est consacré à la comparaison entre $V_{\infty,\ast}$ (définition \eqref{newfunctional})
et  $\mathcal{F}_{\omega_0}$ la fonctionnelle  étudiée par Berman dans \cite{Berman}. En
particulier, on retrouve une version faible d'un résultat de \cite{Berman}. \\

La plupart des démonstrations sont données dans le cas de $\p^n$, mais elles s'étendent naturellement au
cas d'une variété torique projective complexe non singulière.

\section{Une inégalité fonctionnelle canonique sur une variété torique non-singulière }

Cette section est consacrée à l'introduction d'une nouvelle fonctionnelle, notée $V_{\infty,E,m}$, définie
 sur un espace de métriques sur
un  fibré en droites sur  variété torique non-singulière, voir la définition \eqref{newfunctional}, et à
son étude. Cette fonctionnelle a la
particularité d'être canoniquement associée à la variété.

Nous  commençons tout d'abord par établir qu'on peut
associer canoniquement à toute variété torique complexe non singulière
une forme de volume, que nous explicitons localement  en terme de la combinatoire de la variété, c'est le
but du théorème \eqref{formevolumecanonique}. \\

Il est bien connu, voir par exemple \cite{Demazure} ou \cite{Fulton}, qu'une  variété torique projective
normale sur $\C$, correspond
à la donnée d'un $\Z$-module libre $N$ et un éventail $\Si$ sur $N_\R=N\otimes_\Z \R$, et que tout
diviseur de Cartier équivariant
définit un polytope convexe dans $M_\R$, où $M$ est le dual de $N$. Il est possible de procéder autrement
en partant d'un polytope
convexe à sommets entiers et de lui associer une variété torique projective normale, voir
théorème \eqref{torassopoly} ci-dessous. Nous  adopterons cette
dernière construction, de
sorte que toute variété torique considérée dans cet article  soit attachée à un polytope
convexe donné.

\begin{theorem}\label{torassopoly}
Soit $\Delta\subset \R^n$ un polytope convexe d'intérieur non vide dont les sommet sont
dans
$\Z^n$. Il existe un unique éventail complet $\Si$ dans $\R^n$ et un unique diviseur de
Cartier
$E$ horizontal $T$-invariant sur $X(\Si)$ tels que:
\begin{enumerate}
 \item $\Delta_E=\Delta$.
\item Le diviseur $E$ est ample.
\end{enumerate}
L'éventail $\Si$ est le plus petit éventail complet tel que la fonction d'appui
$\psi_\Delta$\footnote{$\psi_\Delta$  définit le polytope $\Delta$.} est
linéaire par morceaux relativement à $\Si$. De plus, $X(\Si)$ est lisse si et seulement
si le
polytope $\Delta$ est absolument simple; dans ce cas, le diviseur $E$ est très ample.
\end{theorem}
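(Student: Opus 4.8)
L'approche est classique et repose sur la correspondance entre polytopes rationnels et fibr�s en droites amples sur les vari�t�s toriques, telle qu'expos�e dans \cite{Fulton} ou \cite{Demazure}; il s'agit ici d'inverser cette correspondance. Tout d'abord, je partirais de la fonction d'appui $\psi_\Delta : \R^n \lra \R$ d�finie par $\psi_\Delta(v) = \min_{u\in\Delta}\langle u,v\rangle$. Cette fonction est convexe, positivement homog�ne de degr� $1$, et lin�aire par morceaux. Le point cl� est que la d�composition de $\R^n$ en c�nes maximaux sur lesquels $\psi_\Delta$ est lin�aire --- c'est-�-dire les c�nes normaux aux faces de $\Delta$ --- forme un �ventail complet $\Si$, puisque $\Delta$ est d'int�rieur non vide (ce qui assure que les c�nes normaux sont de dimension maximale) et born� (ce qui assure la compl�tude, i.e.\ le recouvrement de $\R^n$). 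Par construction, c'est le plus petit �ventail rendant $\psi_\Delta$ lin�aire par morceaux, car tout raffinement strict subdiviserait inutilement un c�ne de lin�arit�.

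Ensuite, je d�finirais le diviseur $T$-invariant $E$ sur $X(\Si)$ par $E = -\sum_\rho \psi_\Delta(v_\rho)\, D_\rho$, o� la somme porte sur les rayons $\rho$ de $\Si$, $v_\rho$ en est le g�n�rateur primitif, et $D_\rho$ le diviseur torique associ�. La lin�arit� par morceaux de $\psi_\Delta$ relativement � $\Si$ est pr�cis�ment la condition pour que $E$ soit de Cartier; le polytope associ� $\Delta_E$ est alors $\{u\in M_\R : \langle u, v_\rho\rangle \geq \psi_\Delta(v_\rho)\ \forall \rho\}$, qui co�ncide avec $\Delta$ gr�ce au fait que $\Delta$ est l'intersection des demi-espaces d�finis par ses facettes. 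L'amplitude de $E$ r�sulte du crit�re torique standard: $E$ est ample si et seulement si $\psi_\Delta$ est strictement convexe relativement � $\Si$, ce qui est automatique puisque $\Si$ a �t� choisi minimal (les c�nes de lin�arit� maximale de $\psi_\Delta$ sont exactement les c�nes maximaux de $\Si$, donc $\psi_\Delta$ change de pente en franchissant chaque mur).

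Pour l'unicit�, je remarquerais que tout �ventail complet $\Si'$ rendant $\psi_\Delta$ lin�aire par morceaux raffine $\Si$; si de plus le diviseur correspondant est ample, la stricte convexit� force $\Si' = \Si$. L'unicit� de $E$ � l'int�rieur de cet �ventail d�coule alors du fait qu'un diviseur $T$-invariant horizontal ample de polytope $\Delta$ est d�termin� par ses coefficients $-\psi_\Delta(v_\rho)$. Enfin, pour le crit�re de lissit�: $X(\Si)$ est lisse si et seulement si chaque c�ne maximal de $\Si$ est engendr� par une partie d'une base de $\Z^n$; en dualisant, ceci �quivaut � dire qu'en chaque sommet de $\Delta$ les ar�tes primitives issues de ce sommet forment une base de $\Z^n$, ce qui est la d�finition de la simplicit� absolue. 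La tr�s amplitude de $E$ dans le cas lisse est alors un r�sultat standard (sur une vari�t� torique lisse, tout diviseur ample est tr�s ample).

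Le point d�licat --- plut�t une subtilit� qu'un v�ritable obstacle --- sera de v�rifier proprement que l'�ventail des c�nes normaux est bien un �ventail au sens technique (les faces des c�nes normaux sont encore des c�nes normaux, et deux c�nes normaux s'intersectent le long d'une face commune), et que la condition de minimalit� entra�ne effectivement la stricte convexit� de $\psi_\Delta$ sans hypoth�se suppl�mentaire. Tout ceci est standard mais m�rite d'�tre �nonc� avec soin; je renverrais pour les d�tails � \cite[\S 1.5]{Fulton}.
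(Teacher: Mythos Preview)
Your outline is correct and follows the standard route via the normal fan of $\Delta$, the support function $\psi_\Delta$, and the torique ampleness criterion in terms of strict convexity; this is precisely the argument one finds in \cite{Fulton} or in the reference the paper invokes. Note, however, that the paper does not supply its own proof of this statement at all: its entire demonstration consists of the single line ``Voir \cite[th\'eor\`eme 2.4.1]{Maillot}''. So there is no substantive comparison to make between approaches --- you have written out what the paper merely cites.

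One minor terminological wrinkle: with your definition $\psi_\Delta(v)=\min_{u\in\Delta}\langle u,v\rangle$, the function $\psi_\Delta$ is a pointwise infimum of linear forms and is therefore \emph{concave}, not convexe as you write; the ampleness criterion is then stated as strict concavity (or, equivalently, one works with $-\psi_\Delta$ and speaks of strict convexity). This does not affect the substance of your argument, but you should align the sign convention with whichever reference you cite.
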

\begin{proof}
 Voir \cite[théorème 2.4.1]{Maillot}.
\end{proof}

 Dans \cite{Batyrev}, Batyrev et Tschinkel introduisent un recouvrement canonique pour toute variété torique projective lisse, nous
utiliserons ce recouvrement pour construire  une forme de volume continue canoniquement attachée à la variété. Commençons par rappeler
la construction de Batyrev et Tschinkel.

\begin{definition}\label{Batyrev}[Batyrev et Tschinkel]
Soit $X(\Si)$, une variété torique projective sur $\C$, définie par un éventail
$\Si$. Pour tout $\si\in \Si$, on définit $C_\si\subset X(\Si)$ de la façon suivante:
\[
C_\si=\bigl\{x\in X(\Si): \; \forall \, m\in \mathcal{S}_\si:=\check{\si}\cap M,\;
\chi^m
\,\text{est
régulier en x et}\,\, |\chi^m(x)|\leq 1 \bigr\}.
\]
\end{definition}

\begin{proposition}
 Pour tout $\si\in \Si$, on a $C_\si\subset U_\si$ et $C_\si$ est compact. De plus, si
$\tau,\tau'\in
\Si$, alors:
\[
 C_\tau\cap C_{\tau'}=C_{\tau\cap\tau'}.
\]

\end{proposition}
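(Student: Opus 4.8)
Le plan est de se ramener aux propri\'et\'es combinatoires standard des vari\'et\'es toriques : le fait que les semi-groupes $\mathcal{S}_\si=\check\si\cap M$ soient de type fini (lemme de Gordan), la correspondance orbites--c\^ones, et le lemme de s\'eparation pour les \'eventails, dont d\'ecoule en particulier le recollement $U_\tau\cap U_{\tau'}=U_{\tau\cap\tau'}$. On a trois points \`a \'etablir : (i) $C_\si\subset U_\si$ ; (ii) $C_\si$ est compact ; (iii) $C_\tau\cap C_{\tau'}=C_{\tau\cap\tau'}$.

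Pour (i), on prend $x\in C_\si$ et on consid\`ere le c\^one minimal $\tau\in\Si$ tel que $x\in U_\tau$, c'est-\`a-dire tel que $x$ appartienne \`a l'orbite $O_\tau$ ; il suffit de voir que $\tau$ est une face de $\si$. Sinon, $\tau\cap\si$ serait une face propre de $\tau$, et le lemme de s\'eparation appliqu\'e \`a $\tau$ et $\si$ produirait $m_0\in\mathcal{S}_\tau$ avec $\tau\cap m_0^\perp=\tau\cap\si$ et $-m_0\in\mathcal{S}_\si$ ; comme $\tau\cap m_0^\perp$ est une face propre de $\tau$, on aurait $m_0\notin\tau^\perp$, donc $\chi^{m_0}$ s'annulerait identiquement sur $O_\tau$, en particulier en $x$, tandis que $\chi^{-m_0}$ serait r\'egulier en $x$ (car $-m_0\in\mathcal{S}_\si$ et $x\in C_\si$) : ceci contredit $\chi^{m_0}\chi^{-m_0}=1$, valable sur tout voisinage de $x$ o\`u les deux caract\`eres sont r\'eguliers. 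Donc $\tau$ est une face de $\si$ et $x\in U_\tau\subset U_\si$. Pour (ii), le lemme de Gordan fournit des g\'en\'erateurs $m_1,\dots,m_k$ de $\mathcal{S}_\si$ ; comme $\C[\mathcal{S}_\si]=\C[\chi^{m_1},\dots,\chi^{m_k}]$, l'application $\varphi=(\chi^{m_1},\dots,\chi^{m_k})$ identifie $U_\si$ \`a une sous-vari\'et\'e ferm\'ee de $\C^k$. Tout $m\in\mathcal{S}_\si$ \'etant combinaison \`a coefficients entiers positifs des $m_i$, les in\'egalit\'es $|\chi^{m_i}(x)|\le1$ ($1\le i\le k$) \'equivalent \`a $|\chi^m(x)|\le1$ pour tout $m\in\mathcal{S}_\si$ ; compte tenu de (i), il vient $\varphi(C_\si)=\varphi(U_\si)\cap D$, o\`u $D=\{z\in\C^k\colon |z_i|\le1,\ 1\le i\le k\}$. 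Cet ensemble, intersection du ferm\'e $\varphi(U_\si)$ de $\C^k$ et du compact $D$, est compact ; comme $\varphi$ r\'ealise un hom\'eomorphisme de $U_\si$ sur son image, $C_\si$ est compact.

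Pour (iii), posons $\gamma=\tau\cap\tau'$. Comme $\gamma$ est une face de $\tau$ et de $\tau'$, on a $\check\tau,\check{\tau'}\subset\check\gamma$, donc $\mathcal{S}_\tau\cup\mathcal{S}_{\tau'}\subset\mathcal{S}_\gamma$, d'o\`u aussit\^ot $C_\gamma\subset C_\tau\cap C_{\tau'}$. Pour l'inclusion r\'eciproque, on part de $x\in C_\tau\cap C_{\tau'}$ : d'apr\`es (i) et le recollement, $x\in U_\tau\cap U_{\tau'}=U_\gamma$, donc $\chi^m$ est r\'egulier en $x$ pour tout $m\in\mathcal{S}_\gamma$. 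Le lemme de s\'eparation fournit $m_0\in\mathcal{S}_\tau$ tel que $-m_0\in\mathcal{S}_{\tau'}$ et tel que $\mathcal{S}_\gamma$ soit engendr\'e par $\mathcal{S}_\tau$ et $-m_0$. De $m_0\in\mathcal{S}_\tau$, $-m_0\in\mathcal{S}_{\tau'}$ et $x\in C_\tau\cap C_{\tau'}$ on d\'eduit $|\chi^{m_0}(x)|\le1$ et $|\chi^{-m_0}(x)|\le1$, d'o\`u $|\chi^{m_0}(x)|=1$. Pour $m\in\mathcal{S}_\gamma$, \'ecrivons $m=m'-k\,m_0$ avec $m'\in\mathcal{S}_\tau$ et $k\in\Z_{\ge0}$ ; sur $U_\gamma$ on a alors $\chi^m=\chi^{m'}\,(\chi^{m_0})^{-k}$, d'o\`u
\[
|\chi^m(x)|=|\chi^{m'}(x)|\cdot|\chi^{m_0}(x)|^{-k}=|\chi^{m'}(x)|\le1 ,
\]
ce qui donne $x\in C_\gamma$ et ach\`eve la preuve.

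L'\'etape d\'elicate est l'inclusion $C_\tau\cap C_{\tau'}\subset C_{\tau\cap\tau'}$ : un \'el\'ement du semi-groupe $\mathcal{S}_\gamma=(\check\tau+\check{\tau'})\cap M$ n'est en g\'en\'eral pas somme d'un \'el\'ement de $\mathcal{S}_\tau$ et d'un \'el\'ement de $\mathcal{S}_{\tau'}$, de sorte que la majoration $|\chi^m(x)|\le1$ ne r\'esulte pas imm\'ediatement des hypoth\`eses ; c'est l'\'egalit\'e $|\chi^{m_0}(x)|=1$, obtenue en combinant les bornes issues de $C_\tau$ et de $C_{\tau'}$, jointe \`a la description de $\mathcal{S}_\gamma$ donn\'ee par le lemme de s\'eparation, qui permet de conclure. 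Les v\'erifications restantes --- r\'egularit\'e des caract\`eres, multiplicativit\'e $\chi^{a+b}=\chi^a\chi^b$ sur $U_\gamma$ par densit\'e du tore, propri\'et\'es usuelles de la correspondance orbites--c\^ones --- sont de routine.
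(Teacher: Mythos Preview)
Your argument is correct. The paper itself does not prove this proposition but simply refers the reader to \cite[proposition~3.2.2]{Maillot}; your sketch is precisely the standard argument one finds there and in the usual references on toric varieties (orbit--cone correspondence plus separation lemma for~(i), Gordan's lemma and the closed immersion $\varphi_\si$ for~(ii), the localisation $\mathcal{S}_\gamma=\mathcal{S}_\tau+\Z_{\ge0}(-m_0)$ for~(iii)). Your closing remark on why the inclusion $C_\tau\cap C_{\tau'}\subset C_{\tau\cap\tau'}$ is not immediate --- namely that $\mathcal{S}_{\tau\cap\tau'}$ need not equal $\mathcal{S}_\tau+\mathcal{S}_{\tau'}$, so one really has to pass through $|\chi^{m_0}(x)|=1$ --- is well taken.
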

\begin{proof}
 Voir \cite[proposition 3.2.2]{Maillot}.
\end{proof}

Soit $\bigl\{m_1,\ldots,m_q \bigr\}$ une famille génératrice du semi-groupe $\mathcal{S}_\si$. On dispose de l'immersion fermée:

\begin{equation}\label{immCsi}
 \vf_\si:U_\si\lra \C^q
\end{equation}
 \begin{equation*}
\quad \quad\quad \quad x\lra \bigl(\chi^{m_1}(x),\ldots,\chi^{m_q}(x) \bigr).
\end{equation*}

On remarque que $\vf(C_\si)=\vf(U_\si)\cap B(0,1)^q$.

\begin{proposition}
Si $\Si$ est complet, alors les compacts $C_\si$ forment un recouvrement de $X(\Si)$ lorsque $\si$
parcourt $\Si_{\max}$.
\end{proposition}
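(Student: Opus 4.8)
The plan is to reduce the statement to the open dense torus $T=\mathrm{Spec}\,\C[M]\subset X(\Si)$ and then pass to the closure, using that a finite union of compact subsets of a Hausdorff space is closed.

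First I would analyse $C_\si\cap T$. For $t\in T$ every character $\chi^m$, $m\in M$, is regular (indeed invertible) at $t$, so $t\in C_\si$ if and only if $|\chi^m(t)|\le 1$ for every $m\in\mathcal S_\si=\check{\si}\cap M$. The key device is the ``tropicalisation'' map $\rho\colon T\lra N_\R$ given by $\langle\rho(t),m\rangle=-\log|\chi^m(t)|$ for $m\in M$; this is well defined since $m\mapsto\log|\chi^m(t)|$ is a group homomorphism $M\to\R$, hence an element of $\mathrm{Hom}(M,\R)=N_\R$. Then $|\chi^m(t)|\le 1\iff\langle\rho(t),m\rangle\ge 0$. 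Since by the lemme de Gordan $\mathcal S_\si=\check{\si}\cap M$ generates the cone $\check{\si}$, requiring $\langle\rho(t),m\rangle\ge 0$ for all $m\in\mathcal S_\si$ is the same as requiring it for all $m\in\check{\si}$, i.e. $\rho(t)\in(\check{\si})^{\vee}=\si$. Thus
\[
 C_\si\cap T=\rho^{-1}(\si)\qquad(\si\in\Si).
\]

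Next I would invoke completeness. Since $\Si$ is complet, $N_\R=\bigcup_{\si\in\Si}\si$, and as every cone of $\Si$ is a face of a maximal cone one in fact has $N_\R=\bigcup_{\si\in\Si_{\max}}\si$. Hence for each $t\in T$ the vector $\rho(t)$ lies in some $\si\in\Si_{\max}$, so $t\in C_\si$; that is,
\[
 T\subset\bigcup_{\si\in\Si_{\max}}C_\si .
\]
Finally I would close up: each $C_\si$ is compact by the proposition above, and $X(\Si)$ is a separated complex variety, so each $C_\si$ is closed in $X(\Si)$ and therefore so is the finite union $\bigcup_{\si\in\Si_{\max}}C_\si$. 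As $T$ is dense in $X(\Si)$, passing to closures gives $X(\Si)=\overline T\subset\bigcup_{\si\in\Si_{\max}}C_\si\subset X(\Si)$, hence equality.

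The main obstacle is the first step, the identification $C_\si\cap T=\rho^{-1}(\si)$: one must be careful that the monomials $m\in\mathcal S_\si$ generate $\check{\si}$ only as a cone (not as a lattice), so that ``$|\chi^m(t)|\le 1$ on a generating set of $\mathcal S_\si$'' really does force $\rho(t)\in\si$, and one must fix once and for all the convention relating $\si$, $\check{\si}$ and regularity of $\chi^m$ on $U_\si$. Everything else is soft: completeness of $\Si$ is used exactly once (to cover $N_\R$ by maximal cones), and the passage from $T$ to $X(\Si)$ only uses density of the torus together with the fact that a finite union of compacts in a Hausdorff space is closed. An alternative to this last step would be an induction on $\dim X(\Si)$ via the orbit-cone correspondence, identifying $C_\si\cap\overline{O_\tau}$ with the analogous compact set attached to the complete fan $\mathrm{Star}(\tau)$; the closure argument above is shorter and I would prefer it.
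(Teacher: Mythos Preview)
Your argument is correct. The paper itself does not give a proof of this proposition; it simply cites \cite[proposition 3.2.3]{Maillot}, so there is no in-paper argument to compare against. Your approach via the tropicalisation map $\rho\colon T\to N_\R$, the identification $C_\si\cap T=\rho^{-1}(\si)$, and the closure argument using density of $T$ and compactness of the $C_\si$ is the standard and natural one, and is essentially how the result is proved in the cited reference as well.

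One small remark: the step ``$\langle\rho(t),m\rangle\ge 0$ for all $m\in\check\si\cap M$ implies $\rho(t)\in\si$'' does not really require Gordan's lemma (which concerns finite generation of $\check\si\cap M$ as a semigroup). What you actually use is that $\check\si$ is a \emph{rational} polyhedral cone, so its extremal rays contain lattice points and hence $\check\si\cap M$ spans $\check\si$ over $\R_{\ge 0}$; biduality $(\check\si)^\vee=\si$ then gives the conclusion. This is a cosmetic point and does not affect the validity of your proof.
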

\begin{proof}
 \cite[proposition 3.2.3]{Maillot}.
\end{proof}
\begin{remarque}\label{integpartition}
\rm{On suppose que $X(\Si)$ est lisse, donc en particulier $\Si$ est complet, dans ce cas, si on se donne $\mu$ une forme de volume sur $X$, alors pour toute fonction absolument intégrable $f$ sur $X$, on a
\[
 \int_X f\mu=\sum_{\si\in \Si_{\max}}\int_{C_\si}f\mu,
\]
car $C_\si\cap C_{\si'}=C_{\si\cap \si'}$ est de dimension inférieure à $\dim_\C(X(\Si))-1$,
lorsque $\si\neq \si'\in \Si_{\max}$, et donc  de mesure négligeable.}
\end{remarque}

Soit $\Delta$ un polytope absolument simple, et on considère
$X(\Si)$, la variété torique associée et $E$ comme dans \eqref{torassopoly}, donc $X(\Si)$ est non-singulière et $E$ est très ample.  On a alors le résultat suivant:

\begin{theorem}\label{formevolumecanonique}
Il  existe une
unique forme volume continue sur $X(\Si)$, notée $\omega_{E,\infty}^n$ telle que
\[
(\omega^n_{E,\infty})_{|_{\mathrm{int}(C_\si)}}=\vf_{\si}^\ast\bigl(\mu_0 \bigr)\quad \forall \,
\si\in\Si_{\max}.
\]
où $\mu_0$ désigne la forme volume standard sur $\C^n$, et $\mathrm{int}(\ast)$ est l'intérieur de
$\ast$.
En particulier, si $\Delta=\Delta_n$, (c'est à dire le polytope standard de $\R^n$), donc $X(\Si)=\p^n$, alors  on a
\[
(\omega_\infty^n)_{|_{\{x_0\neq 0\}}}=\biggl(\frac{i}{2\pi}\biggr)^n\frac{\prod_{j=1}^ndz_j\wedge
d\z_j}{\Bigl(\max\big(1,|z_1|,\ldots,|z_n| \bigr)\Bigr)^{2(n+1)}},
\]
avec $z_j=\frac{x_j}{x_0}$ pour $j=1,\ldots,n$.\\

\end{theorem}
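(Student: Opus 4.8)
Le point de départ est le recouvrement canonique $X(\Si)=\bigcup_{\si\in\Si_{\max}}C_\si$ fourni par la définition \eqref{Batyrev} et la proposition qui suit, avec la propriété cruciale $C_\tau\cap C_{\tau'}=C_{\tau\cap\tau'}$ : pour $\si\neq\si'$ dans $\Si_{\max}$, l'intersection est de dimension $\leq n-1$, donc négligeable (remarque \eqref{integpartition}). L'unicité est alors immédiate : toute forme volume continue coïncidant avec $\vf_\si^\ast(\mu_0)$ sur chaque $\mathrm{int}(C_\si)$ est déterminée sur $X(\Si)$ privé d'un fermé négligeable, et par continuité sur $X(\Si)$ tout entier. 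Reste à construire l'objet et à vérifier qu'il est bien \emph{continu} à travers les bords des $C_\si$, ce qui est le c\oe ur de la preuve. Je poserais $\omega^n_{E,\infty}$ comme l'unique forme définie par recollement des $\vf_\si^\ast(\mu_0)$ sur les $\mathrm{int}(C_\si)$, puis je vérifierais le raccord.

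Pour le recollement, je considère deux cônes maximaux $\si,\si'$ partageant une face $\tau=\si\cap\si'$ de codimension $1$. Sur $U_\si\cap U_{\si'}=U_\tau$, les deux immersions $\vf_\si:U_\si\to\C^q$ et $\vf_{\si'}:U_{\si'}\to\C^{q'}$ sont données par des familles de caractères $\chi^{m_i}$, et sur le tore $T=U_0$ ces caractères sont des monômes de Laurent inversibles. Le changement de coordonnées entre les deux chartes affines $U_\si\cong\C^n$ et $U_{\si'}\cong\C^n$ (via le choix de bases de $\mathcal S_\si$ et $\mathcal S_{\si'}$, qui sont libres puisque $\Delta$ est absolument simple donc $X(\Si)$ lisse) est un monôme de Laurent en $n$ variables dont le jacobien est, à une racine de l'unité près, à nouveau un monôme. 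Le point clé est que sur l'hypersurface de recollement $\{|\chi^{m}|=1\}$ qui sépare $C_\si$ de $C_{\si'}$ (où $m$ engendre le rayon de $\tau^\perp$ pointant de $\si$ vers $\si'$), ce jacobien a module $1$. Donc $\vf_\si^\ast(\mu_0)$ et $\vf_{\si'}^\ast(\mu_0)$ se raccordent continûment le long de cette hypersurface. J'expliciterais ce calcul de jacobien en utilisant que la matrice de passage entre les deux bases de monômes est dans $GL_n(\Z)$ (lissité) et que, restreinte à l'hypersurface, chaque monôme correcteur a module $1$.

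L'étape que j'anticipe comme la plus délicate est précisément ce contrôle du module du jacobien \emph{exactement} sur les hypersurfaces de recollement (et non à l'intérieur), car il faut organiser proprement le dictionnaire entre la combinatoire ($\tau$ face commune, fonction d'appui $\psi_\Delta$ linéaire par morceaux, vecteur primitif du rayon orthogonal) et l'algèbre des monômes ; il faut aussi gérer le fait que les familles génératrices $\{m_1,\dots,m_q\}$ ne sont pas des bases en général — mais pour $X(\Si)$ lisse on peut, sur chaque $U_\si$, se ramener à la base standard $\C^n$ donnée par le cône dual lisse, ce qui simplifie. Pour la formule explicite dans le cas $\Delta=\Delta_n$, $X(\Si)=\p^n$ : le cône maximal $\si_0$ correspondant à la carte $\{x_0\neq 0\}$ a pour cône dual le premier orthant, $\mathcal S_{\si_0}$ est engendré par $e_1,\dots,e_n$, donc $\vf_{\si_0}(z_1,\dots,z_n)=(z_1,\dots,z_n)$ avec $z_j=x_j/x_0$, et $\vf_{\si_0}^\ast(\mu_0)=\bigl(\tfrac{i}{2\pi}\bigr)^n\prod_j dz_j\wedge d\z_j$ sur $\mathrm{int}(C_{\si_0})=\{|z_j|<1\ \forall j\}$. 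Sur les autres chartes $\{x_k\neq 0\}$, un calcul analogue avec les coordonnées $z_j/z_k$ (et $1/z_k$) et le jacobien monomial donne le facteur $\bigl(\max(1,|z_1|,\dots,|z_n|)\bigr)^{-2(n+1)}$ après recollement ; je vérifierais que cette expression globale restreinte à chaque $\mathrm{int}(C_{\si_k})$ redonne bien $\vf_{\si_k}^\ast(\mu_0)$, ce qui par unicité conclut.
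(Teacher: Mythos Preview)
Your proposal is correct, and your approach to the general statement is in fact more complete than the paper's own proof. The paper does \emph{not} give an argument for a general smooth toric variety: after the one-line remark on uniqueness, it announces ``On fera la preuve pour $\Delta=\Delta_n$, c'est-\`a-dire $X(\Si)=\p^n$'' and proceeds only in that case. Its method there is the reverse of your gluing strategy: it posits the global formula
\[
(\omega_\infty^n)_{|_{\{x_0\neq 0\}}}=\Bigl(\tfrac{i}{2\pi}\Bigr)^n\frac{\prod_{j=1}^ndz_j\wedge d\z_j}{\bigl(\max(1,|z_1|,\ldots,|z_n|)\bigr)^{2(n+1)}}
\]
as a candidate, works out explicitly the charts $\vf_j$ associated to the cones $\si_0,\ldots,\si_n$ of $\p^n$, and then checks by direct change of variables that $(\vf_j^{-1})^\ast\omega_\infty^n=\mu_0$ on $\mathrm{int}(C_j)$ for each $j$. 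This is exactly the verification you outline in your last paragraph, so on $\p^n$ the two proofs coincide.

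The genuine difference is your Jacobian-on-the-hypersurface argument for arbitrary smooth $X(\Si)$. It is correct: in the smooth case each $U_\si\cong\C^n$ via a $\Z$-basis of $\mathcal{S}_\si$, the transition map between two adjacent charts is a monomial map with Jacobian determinant a single monomial $\pm\prod\chi^{a_i m_i}$, and on the separating hypersurface $C_\tau$ (where $\tau=\si\cap\si'$) the modulus of the relevant character is $1$, forcing $|\mathrm{Jac}|^2=1$ there. This yields the continuous recollement directly, whereas the paper leaves the general case implicit. What your route buys is a uniform argument that does not require guessing a closed global formula; what the paper's route buys, in the $\p^n$ case, is that you get the explicit expression for free rather than having to derive it afterwards.
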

\begin{proof}
Localement sur un ouvert assez petit $U$, on a $\omega_{E,\infty}^n=f \prod_{j=1}^ndy_j\wedge d\overline{y}_j$ où $y_1,\ldots,y_n$ est un système local de coordonnées sur $U$ et $f$ est une fonction positive et continue sur $U$. Donc l'unicité résultera du   $(1.)$.\\

On fera la preuve pour $\Delta=\Delta_n$, c'est à dire $X(\Si)=\p^n$. Notons par
$x_0,\ldots,x_n$ les coordonnées homogènes standards de $\p^n$, et posons sur l'ouvert
$\bigl\{x_0\neq 0 \bigr\}$,
$z_j=\frac{x_j}{x_0}$ pour $j=1,\ldots,n$. \\

On considère la forme de volume $\omega_\infty^n$ sur $\p^n$, donnée sur $\{x_0\neq 0\}$ par
\[
(\omega_\infty^n)_{|_{\{x_0\neq 0\}}}=\biggl(\frac{i}{2\pi}\biggr)^n\frac{\prod_{j=1}^ndz_j\wedge
d\z_j}{\Bigl(\max\big(1,|z_1|,\ldots,|z_n| \bigr)\Bigr)^{2(n+1)}}.
\]
Notons que cette dernière condition détermine de façon unique $\omega_\infty^n$. $\p^n$ est défini par la donnée d'un $\Z$-module $N$, libre de rang $n$ et d'un éventail
$\Si_{\p^n}$ (voir par exemple, \cite[§ 1.4]{Fulton}. On note par $M$, le $\Z$-module dual à $N$, on les identifie à $\Z^n$. On
note   par
$e_1,\ldots,e_n$ la
base standard de $\Z^n$, et $e_0:=-\sum_{j=1}^ne_j$. On pose,
\[
 \si_{j}:=\sum_{\substack{k=0\\
k\neq j}}^n\R^+ e_k\quad \forall\, j=0,\ldots,n,
\]
alors $\Si_{\p^n,\max}=\bigl\{ \si_0,\si_1,\ldots,\si_n\bigr\}$. Vérifions que
\begin{align*}
 \mathcal{S}_j&:=\check{\si}_j\cap M=\sum_{\substack{k=1\\
k\neq j}}^n\Z^+(e_k^\ast-e_j^\ast)+\Z^+(-e_j^\ast),\quad \text{et}\quad
\mathcal{S}_0:=\check{\si}_0 \cap M=\sum_{k=1}^n\Z^+ e_k^\ast.
\end{align*}
où $e_j^\ast$ est le dual de $e_j$, pour $j=1,\ldots,n$.

 Soit donc $j\neq 0$, si $m=\sum_{k=1}^n m_k e_k^\ast\in \check{\si}_j$, alors
\begin{align*}
 <m,e_k>&=m_k\geq 0\quad \forall \, k\notin\{ 0,j\},\\
<m,e_0>&=-\sum_{k=1}^n m_k\geq 0.
\end{align*}
Donc,
\begin{align*}
 m&=\sum_{k=1}m_k e_k^\ast\\
&= \sum_{\substack{k=1\\ k\neq j}}^nm_k (e_k^\ast-e_j^\ast)+(-\sum_{k=1}^n m_k)(-e_j^\ast)\in \sum_{\substack{k=1\\
k\neq j}}^n\Z^+(e_k^\ast-e_j^\ast)+\Z^+(-e_j^\ast). \\
\end{align*}
Alors, $ \check{\si}_j\cap M\subseteq \sum_{\substack{k=1\\
k\neq j}}^n\Z^+(e_k^\ast-e_j^\ast)+\Z^+(-e_j^\ast)$.

 Réciproquement si $m\in \sum_{\substack{k=1\\
k\neq j}}^n\Z^+(e_k^\ast-e_j^\ast)+\Z^+(-e_j^\ast)$, alors $m$ peut s'écrire comme suit:
\begin{align*}
 m&= \sum_{\substack{k=1\\ k\neq j}}^n z_k (e_k^\ast-e_j^\ast)+z_j(-e_j^\ast)
\end{align*}
avec $z_k\in \Z^+$ pour $k\geq 1$. On a alors
\begin{align*}
 <m,e_k>&=z_k\in \Z^+\quad \forall \,k\geq 1,\, k\neq j,\\
<m,e_0>&=z_j\in \Z^+.
\end{align*}
Par suite, $ \sum_{\substack{k=1\\
k\neq j}}^n\Z^+(e_k^\ast-e_j^\ast)+\Z^+(-e_j^\ast)\subseteq \check{\si}_j\cap M$.\\

Maintenant si $j=0$, alors on vérifie que $\check{\si}_0=\sum_{k=1}^n\Z^+e_k^\ast$.\\

Comme dans  \eqref{immCsi}, on considère pour tout $j=1,\ldots,n$:
\begin{align*}
 \vf_j:U_{\si_j}&\lra \C^n\\
x&\lra \bigl(\chi^{e_1^\ast-e_j^\ast}(x),\ldots,\chi^{e_{j-1}^\ast-e_j^\ast}(x),\chi^{-e_j^\ast}(x),\chi^{e_{j+1}^\ast-e_j^\ast}(x),\ldots,\chi^{e_{n}^\ast-e_j^\ast}(x) \bigr).
\end{align*}
et
\begin{align*}
 \vf_0:U_{\si_0}&\lra \C^n\\
x&\lra \bigl(\chi^{e_1^\ast}(x),\ldots,\chi^{e_{n}^\ast}(x) \bigr).
\end{align*}
En passant aux coordonnées $z_1,\ldots,z_n$, on obtient:
\begin{align*}
 \vf_j:U_{\si_j}&\lra \C^n\\
z&\lra
\Bigl(\frac{z_1}{z_j},\ldots,\frac{z_{j-1}}{z_j},\frac{1}{z_j},\frac{z_{j+1}}{z_j},\ldots,\frac{z_{n}}{z_j}
\Bigr),
\end{align*}
et
\begin{align*}
 \vf_0:U_{\si_0}&\lra \C^n\\
z&\lra \bigl(z_1,\ldots,z_n \bigr).\\
\end{align*}

Rappelons que lorsque $\C^n$ est muni de sa métrique standard, alors la forme volume  associée, $\mu_0$ est donnée par:
\[
 \mu_0=\biggl(\frac{i}{2\pi}\biggr)^n \prod_{k=1}^ndy_k\wedge d\overline{y}_k
\]
où $\{y_1,\ldots,y_n \}$ est un système local de coordonnées holomorphes sur $\C^n$.\\

Fixons $j\neq 0$ et  posons  $y_k=\frac{z_k}{z_j}$ si $k\neq j$ et $y_j=\frac{1}{z_j}$. Montrons que:
\[
{\omega_\infty^n}_{|_{\mathrm{int}(\mathrm{C}_j)}}=\biggl(\frac{i}{2\pi}\biggr)^n\vf_j^\ast\bigl( \prod_{k=1}^ndy_k\wedge d\overline{y}_k\bigr).
\]
En effet, sur l'intérieur de $C_j$, on a
{\allowdisplaybreaks
\begin{align*}
(\vf_j^{-1})^\ast \omega_\infty^n&= \biggl(\frac{i}{2\pi}\biggr)^n (\vf_j^{-1})^\ast \biggl(\frac{\prod_{k=1}^n
dz_k\wedge d\z_k}{\bigl(\max(1,|z_1|,\ldots,|z_n| ) \bigr)^{2(n+1)}}\biggr)\\
&=\biggl(\frac{i}{2\pi}\biggr)^n\frac{d(\frac{1}{y_j})\wedge d(\frac{1}{\overline{y}_j})\prod_{\substack{k=1\\ k\neq j}}^n d(\frac{y_k}{y_j})\wedge d(\frac{\overline{y_k}}{\overline{y}_j})}{\max\Bigl(1,\bigl|\frac{y_1}{y_j}\bigr|, \ldots,\bigl|\frac{y_{j-1}}{y_j}\bigr|,\bigl|\frac{1}{y_j}\bigr|, \bigl|\frac{y_{j+1}}{y_j}\bigr|, \ldots,\bigl|\frac{y_n}{y_j}\bigr| \Bigr)^{2(n+1)}}\\
&=\biggl(\frac{i}{2\pi}\biggr)^n\frac{|y_j|^{-2(n+1)}\prod_{k=1}^n d y_k\wedge d\overline{y_k}}{\max\Bigl(1,\bigl|\frac{y_1}{y_j}\bigr|, \ldots,\bigl|\frac{y_{j-1}}{y_j}\bigr|,\bigl|\frac{1}{y_j}\bigr|, \bigl|\frac{y_{j+1}}{y_j}\bigr|, \ldots,\bigl|\frac{y_n}{y_j}\bigr| \Bigr)^{2(n+1)}}\\
&=\biggl(\frac{i}{2\pi}\biggr)^n\frac{\prod_{k=1}^n d y_k\wedge d\overline{y_k}}{\max(1,\bigl|y_1\bigr|, \ldots, \ldots,\bigl|y_n\bigr| )^{2(n+1)}}\\
&=\biggl(\frac{i}{2\pi}\biggr)^n\prod_{k=1}^n d y_k\wedge d\overline{y_k}.
\end{align*}
}
Sur $C_0$,
 on a pose $y_k=z_k$ pour $k=1,\ldots,n$, alors
\[
{\omega_\infty^n}_{|_{\mathrm{int}(\mathrm{C}_0)}}=\biggl(\frac{i}{2\pi}\biggr)^n\vf_0^\ast\bigl( \prod_{k=1}^n dy_k\wedge d\overline{y}_k\bigr).
\]
\end{proof}

\subsection{La fonctionnelle $V_{\infty,\ast}$}\label{lafonctionelle}
On considère $\Delta_n$ le polytope standard dans $\R^n$ (resp. $\Delta$ un polytope
absolument
simple). Soit $\p^n$ (resp. $X(\Si)$) l'espace projectif complexe de dimension $n$
associé à
$\Delta_n$ (resp. la variété torique projective complexe non singulière associée à
$\Delta$) comme dans \eqref{torassopoly}. On considère la   forme volume singulière  $\omega_{\infty}^n$ (resp.
$\omega_{E,\infty}^n$)  sur $\p^n$ (resp. sur $X(\Si)$).\\

Soit $m\in \N_{\geq 1}$. On dispose  pour chaque métrique hermitienne continue $h$   sur
$\mathcal{O}(1)$ (resp. sur $E$),   de deux normes sur
$A^{(0,0)}\bigl(\p^n,\mathcal{O}(m)\bigr)$
(resp. sur  $A^{(0,0)}\bigl(X(\Si),E^{\otimes m}\bigr)$):
\begin{enumerate}
\item
La première  norme notée
par
$\vc_{L^2_{h^m,\infty}}$ (resp. $\vc_{L^2_{h^m,\infty_E}}$)  est  associée au produit hermitien
suivant:
\[
\bigl<s,t\bigr>_{L^2_{h^m,\infty}}=\int_{\p^n}(h^ m)\bigl(s,t\bigr)\omega_\infty^n\quad
\forall\,s,t\in
A^{(0,0)}\bigl(\p^n,\mathcal{O}(m)\bigr),
\]
(resp.)
\[
\bigl<s,t\bigr>_{L^2_{h^m,\infty_E}}=\int_{X(\Si)}(h^m)\bigl(s,t\bigr)\omega_{E,\infty}^n\quad
\forall\,s,t\in
A^{(0,0)}\bigl(X(\Si),E^m\bigr),
\]
 et on notera par $h_{L^2,h^m,\infty}$ (resp.$h_{L^2,h^m,\infty_E}$ ) la norme hermitienne induite sur l'espace $\det
 H^0(\p^n,\mathcal{O}(m))$ (resp. $\det H^0(X(\Si),E^m)$).

\item La deuxième norme notée  $\vc_{\sup}$ (resp. $\vc_{E,\sup}$),
est définie comme suit:
\[
 \|s\|_{\sup}=\sup_{x\in \p^n}\|s(x)\|^{\otimes m}\quad \forall \, s\in
A^{(0,0)}(\p^n,\mathcal{O}(m)).
\]
(resp.)
\[
 \|s\|_{E,\sup}=\sup_{x\in X(\Si)}\|s(x)\|^{\otimes m}\quad \forall \, s\in
A^{(0,0)}(X(\Si),E^m).
\]

\end{enumerate}

Rappelons qu'une métrique $h$ sur $E$ est dite admissible, si $h$ est une limite uniforme d'une suite de métriques positives et de classe
$\cl$.

On note par $\h_0$ (resp. ${\h}_{E,0}$), l'espace des métriques sur $\mathcal{O}(1)$ (resp. $E$ ) de classe  $\cl$, admissibles et invariantes par l'action du tore compact de $\p^n$ (resp. de $X(\Si)$).  On introduit la définition suivante:
\begin{definition}\label{newfunctional}
Soit $X(\Si)$ une variété torique projective complexe non singulière. Pour tout $m\in
\N_{\geq 1}$, on définit une fonctionnelle sur ${\h}_{E,0}$ en posant pour tout $h\in \h_{E,0}$:
\[
V_{\infty,E,m}(h):=\log h_{L^2,h^m,\infty_E}+2\int_{X(\Si)}\widetilde{\mathrm{ch}}\bigl(E^m,h^m,h_\infty^m \bigr),
\]
où $\widetilde{\mathrm{ch}}\bigl(E^m,h^m,h_\infty^m \bigr)$ est la classe de Bott-Chern généralisée \footnote{C'est à dire une
forme différentielle généralisée, voir \cite[§4.3]{Maillot}} associée à la suite exacte $0\rightarrow (E^m,h^m)\rightarrow (E^m,
h_\infty^m)\rightarrow 0$ et au caractère $\mathrm{ch}$.
\end{definition}

\begin{remarque}\label{remarqueinvx}
\leavevmode
\begin{enumerate}
\rm{
\item Contrairement à la fonctionnelle $\mathcal{F}_{\omega_0}$ introduite dans \cite{Berman}, voir \eqref{bermanfonctionnelle},
la fonctionnelle $V_{\infty,E,m}$ n'est pas invariante par multiplication par  des constantes strictement positives.
Plus précisement, si $t>0$  on a
\begin{align*}
V_{\infty,E,m}(t\,h)&=\biggl(\binom{n+m}{n}-2\frac{m^{n+1}}{n!} \biggr)\log(t)+V_{\infty,E,m}(h)\quad \forall \, h\in
\h_0.
\end{align*}
\item $V_{\infty,E,m}$ ne dépend pas à priori d'un choix de métrique kählérienne sur $X(\Si)$; ce qui n'est pas le cas
de $\mathcal{F}_{\omega_0}$.}

\end{enumerate}

\end{remarque}

Dans la suite, on suppose que  $\Delta=\Delta_n$ c'est à dire on se restreindra  au cas de $\p^n$.\\

On se propose d'étudier la fonctionnelle $V_{\infty,m}$, Pour cela, on va traduire cette étude  en un problème de  la géométrie convexe sur $\R^n$, par l'intermédiaire de la correspondance suivante: Soit $h\in \h_0$, on  lui associe la fonction suivante:
\[
 f_h(u)=\log\vc \bigl(\exp(-u) \bigr)\quad \forall \, u\in \R^n.
\]
où $\exp(-\cdot)$
\begin{align*}
\R^n&\lra \p^n\\
u=(u_1,\ldots,u_n)&\lra \exp(-u):=\bigl[1:\exp(-u_1),\ldots,\exp(-u_n)\bigr].
\end{align*}
Rappelons que cette fonction a été introduite par Burgos, Philippon et Sombra dans \cite{Burgos2} afin
d'étudier les
hauteurs des variétés toriques projectives. On montre, voir \cite{Burgos2}, que $h\mapsto f_h$ définit
une correspondance bijective entre
les éléments de $\h_0$ et une classe de  fonctions concaves de classe $\cl$ sur
$\R^n$.

 On appelle la
transformée de Legendre-Fenchel de $f_h$, la fonction suivante:
\begin{align*}
 \check{f}_h(x)&=\inf_{u\in \R^n}\bigl(<x,u>-f_h(u) \bigr)\quad \text{si}\;x\in
\Delta_n,\\
\check{f}_h(x)&=-\infty\quad \text{sinon}.
\end{align*}

Comme $h$ est invariante par $(\s)^n$ alors
\[
h_{L^2,h^m,\infty}=\prod_{ \nu\in \mathcal{P}_m}
\bigl<z^\nu,z^\nu\bigr>_{L^2_{h^m,\infty}},
\]
où $ \mathcal{P}_m=\bigl\{\nu=(\nu_1,\ldots,\nu_n)\in \N^n\;\bigl|\;
\sum_{j=1}^n\nu_j\leq m\bigr\}$ et $\bigl\{z^\nu \,\bigl|\; \nu\in \mathcal{P}_m\bigr\}$
est une base de $H^0(\p^n,\mathcal{O}(m))$ formée par les monômes de degré inférieur à
$m$.\\

Le résultat suivant sera utile dans la suite:
\begin{theorem}\label{bottchernfenchel}
Soit $\p^n$ l'espace projectif de dimension $n$ (resp. $X(\Si)$  une variété torique projetive non-singulière) sur $\C$.
$\overline{\mathcal{O}(1)}=(\mathcal{O}(1),h)$ (resp. $\overline{E}=(E,h)$) le fibré de Serre (resp. un fibré en droites
équivariant) muni
d'une métrique $h$, positive et invariante par l'action du tore compact de $\p^n$ (resp. $X(\Si)$). On a
\[
\int_{\p^n}\widetilde{\mathrm{ch}}(\mathcal{O}(1),h,h_\infty)=\int_{\Delta_n} \check{f}_h
\]
(resp.)
\[
\int_X\widetilde{\mathrm{ch}}(E,h,h_\infty)=\int_{\Delta}\check{f}_h.
\]
où $h_\infty$ est la métrique canonique sur $\mathcal{O}(1)$ (resp. sur $E$).
\end{theorem}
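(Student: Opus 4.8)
The plan is to convert the left-hand side into an energy integral and then translate it, through the dictionary $h\mapsto f_{h}$ of Burgos--Philippon--Sombra \cite{Burgos2}, into an integral of Legendre--Fenchel transforms over the fixed polytope. As announced it is enough to treat $\p^{n}$ with $\mathcal{O}(1)$, the toric case being word for word the same once Theorem~\eqref{formevolumecanonique} is available. Writing $\widetilde{\mathrm{ch}}(\mathcal{O}(1),h,h_\infty)=\sum_{k\geq 1}\frac{1}{k!}\,\widetilde{c_1^{\,k}}(\mathcal{O}(1),h,h_\infty)$, where $\widetilde{c_1^{\,k}}$ is the Bott--Chern transgression of $c_1^{\,k}$ and has bidegree $(k-1,k-1)$, only the term $k=n+1$ survives after integration over $\p^{n}$, so
\[
\int_{\p^{n}}\widetilde{\mathrm{ch}}(\mathcal{O}(1),h,h_\infty)=\frac{1}{(n+1)!}\int_{\p^{n}}\widetilde{c_1^{\,n+1}}(\mathcal{O}(1),h,h_\infty).
\]
I would then substitute the explicit transgression $\widetilde{c_1^{\,n+1}}(\mathcal{O}(1),h,h_\infty)=\phi\sum_{j=0}^{n}c_1(\mathcal{O}(1),h)^{j}\wedge c_1(\mathcal{O}(1),h_\infty)^{n-j}$, $\phi$ being the logarithm of the ratio of the two metrics. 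Because $h_\infty$ is only continuous, the currents $c_1(\mathcal{O}(1),h_\infty)^{n-j}$ must be read in the Bedford--Taylor sense (their local potentials are bounded); I would carry this out inside the formalism of generalized forms of \cite[\S 4.3]{Maillot}, or, equivalently, approximate $h_\infty$ from above by smooth positive invariant metrics and pass to the limit, all quantities being continuous for that convergence.

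Next comes the passage to convex geometry. By Remark~\eqref{integpartition} the complement of the open orbit $(\C^{\ast})^{n}$ is negligible, so the above integral can be computed on $\R^{n}$ via $u\mapsto\exp(-u)$. Invariance of $h$ and $h_\infty$ identifies the potential of $c_1(\mathcal{O}(1),h)$ with $f_{h}$ and that of $c_1(\mathcal{O}(1),h_\infty)$ with $f_{h_\infty}$, hence $\phi$ with $f_{h}-f_{h_\infty}$, while $c_1(\mathcal{O}(1),h)^{j}\wedge c_1(\mathcal{O}(1),h_\infty)^{n-j}$ pushes forward, up to the constant $n!$, to the mixed real Monge--Amp\`ere measure $\mathrm{MA}\bigl(f_{h}^{(j)},f_{h_\infty}^{(n-j)}\bigr)$ on $\R^{n}$ (the constant being fixed by $\deg_{\mathcal{O}(1)}\p^{n}=n!\,\mathrm{vol}(\Delta_{n})$ against $\int_{\R^{n}}\mathrm{MA}(f_{h})=\mathrm{vol}(\Delta_{n})$). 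Since $n!/(n+1)!=1/(n+1)$, the integral equals
\[
\frac{1}{n+1}\sum_{j=0}^{n}\int_{\R^{n}}\bigl(f_{h}-f_{h_\infty}\bigr)\,\mathrm{MA}\bigl(f_{h}^{(j)},f_{h_\infty}^{(n-j)}\bigr),
\]
that is, the Aubin--Mabuchi energy of $f_{h}$ relative to $f_{h_\infty}$.

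Finally I would invoke the convex-analytic identity
\[
\frac{1}{n+1}\sum_{j=0}^{n}\int_{\R^{n}}(f-g)\,\mathrm{MA}\bigl(f^{(j)},g^{(n-j)}\bigr)=\int_{\Delta_{n}}\bigl(\check f-\check g\bigr),
\]
valid for concave functions $f,g$ on $\R^{n}$ whose Legendre--Fenchel transforms are carried by $\Delta_{n}$; this follows from integration by parts together with the change of variables $x=\nabla f(u)$, using that $\nabla f$ transports $\mathrm{MA}(f)$ to Lebesgue measure on $\Delta_{n}$ and that $\check f\bigl(\nabla f(u)\bigr)=\langle\nabla f(u),u\rangle-f(u)$, and it is classical in the toric context (\cite{Burgos2}, \cite{Berman}). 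Taking $g=f_{h_\infty}$ and noting that $f_{h_\infty}$ is the support function of $\Delta_{n}$, so that $f_{h_\infty}(u)=\min(0,u_{1},\dots,u_{n})$ and $\check f_{h_\infty}\equiv 0$ on $\Delta_{n}$, yields exactly $\int_{\p^{n}}\widetilde{\mathrm{ch}}(\mathcal{O}(1),h,h_\infty)=\int_{\Delta_{n}}\check f_{h}$, the normalizations in the paper's conventions being arranged so that no extra factor appears (this is in substance the computation of Bott--Chern numbers of toric metrized line bundles of \cite{Maillot}). The main obstacle is the first step --- making rigorous the generalized Bott--Chern form of the singular metric $h_\infty$ and the integrations by parts at the level of currents within Maillot's formalism --- together with the careful bookkeeping of constants; a secondary point is that $g=f_{h_\infty}$ is only piecewise linear, so $\mathrm{MA}(f_{h_\infty})$ is singular (supported on the cones of the fan) and the identity is obtained by approximating it with strictly concave smooth functions.
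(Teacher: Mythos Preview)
Your proposal is a correct and essentially complete outline of the standard proof of this identity: reduce $\widetilde{\mathrm{ch}}$ to its top-degree component, use the explicit transgression formula, pass to $\R^{n}$ via the logarithmic coordinates and the dictionary $h\mapsto f_{h}$, and conclude with the energy/Legendre--Fenchel identity together with $\check f_{h_\infty}\equiv 0$ on $\Delta_{n}$. The technical caveats you flag (Bedford--Taylor products for the merely continuous $h_\infty$, approximation by smooth strictly positive invariant metrics, bookkeeping of normalizations) are exactly the points that require care, and the references you invoke (\cite{Maillot}, \cite{Burgos2}) are the right ones for making them rigorous.

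The paper, however, does not supply a proof at all: its entire argument is the single line ``See \cite[\S 9.3]{Nystrom}''. So there is no approach to compare --- you have reconstructed the proof that the cited reference (and the toric literature more broadly) contains, whereas the paper simply imports the result as a black box. What your write-up buys, relative to the paper, is a self-contained explanation of why the Bott--Chern secondary class integrates to the Legendre--Fenchel integral, which is arguably helpful given how central this identity is to the rest of the article (it is invoked in the proofs of Theorem~\eqref{borneVmtheorem} and the lemma inside the torsion theorem). If you want to tighten it, the only substantive point is to be explicit about the sign convention for $\phi$ (here $\phi=-\log(h/h_\infty)=f_{h_\infty}-f_{h}$ in the potentials, matching the paper's later use) and to state precisely which normalization of $dd^{c}$ and of the Monge--Amp\`ere measure you adopt so that the constant $n!$ you introduce cancels correctly against $1/(n+1)!$.
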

\begin{proof}
See \cite[§ 9.3]{Nystrom}.
\end{proof}

Pour tout $\nu=(\nu_1,\ldots,\nu_n )\in \N^n$ tel que $\sum_{k=1}^n\nu_k\leq m$, on
considère l'ensemble:
\[
\Delta_{n,\nu}=\bigl\{x\in \Delta_n\,\bigl|\; \frac{\nu_j}{m}\leq
x_j<\frac{\nu_j+1}{m}\quad \forall\,
j=1,\ldots,n \bigr\}.
\]
On vérifie que la famille formée par ces ensembles  forme une partition de $\Delta_n$. On
considère les polynômes $R_\nu$ suivants:

\[R_\nu(x)=\bigl(1-\sum_{k=1}^n\nu_k+m\sum_{k=1}^n x_k\bigr)\prod_{k=1}^n
\bigl(1+\nu_k-mx_k\bigr).\]

On notera par $\psi$ la fonction définie sur l'ouvert $\bigl\{x_0\neq 0 \bigr\}$ par:
\[
 \psi(z)=\frac{1}{2}\log h(1,1).
\]
Dans ce cas $h(z^\nu,z^\nu)=|z|^{2\nu}e^{2m\psi(z)}$ sur $\bigl\{x_0\neq 0 \bigr\}$ et que
$f_h(u)=\psi(\exp(u))$ pour tout $u\in \R^n$.\\

\begin{proposition}\label{inegaliteznu}
Soit $h\in \h_0$. On a
\begin{equation}
\log \bigl<z^\nu,z^\nu\bigr>_{L^2_{h^m,\infty}}\leq -2m \frac{1}{\mathrm{Vol}(\Delta_{n,\nu})}\int_{\Delta_{n,\nu}}\check{f}_h(x)dx-\frac{1}{\mathrm{Vol}(\Delta_{n,\nu})}\int_{\Delta_{n,\nu}} \log R_\nu(x)dx+\log(n+1),
\end{equation}
pour tout $h\in \h_0$, $m\in \N_{\geq 1}$ et $\nu\in \mathcal{P}_m$. Le terme
à droite est fini.
\end{proposition}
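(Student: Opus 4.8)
The plan is to translate the $L^2$-pairing $\langle z^\nu,z^\nu\rangle_{L^2_{h^m,\infty}}$ into an explicit integral over $\R^n$, to bound its integrand pointwise via Legendre--Fenchel duality, and then to average the resulting inequality over $\Delta_{n,\nu}$.

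First I would reduce the computation to the chart $\{x_0\neq 0\}$, whose complement has measure zero, where $\omega_\infty^n$ is given by the explicit local formula recalled above and $h^m(z^\nu,z^\nu)=|z|^{2\nu}e^{2m\psi(z)}$. Using the $(\s)^n$-invariance of $h$ I would pass to polar coordinates (integrating out the angular variables, which produces a factor $2^n$) and then substitute $\rho_j=e^{-u_j}$, which turns $\psi$ into $f_h$ and $\max(1,|z_1|,\dots,|z_n|)^{2(n+1)}$ into $e^{-2(n+1)\min(0,u_1,\dots,u_n)}$, arriving at
\[
\langle z^\nu,z^\nu\rangle_{L^2_{h^m,\infty}}=2^n\int_{\R^n}\exp\Bigl(2mf_h(u)-2\sum_{j=1}^n(\nu_j+1)u_j+2(n+1)\min(0,u_1,\dots,u_n)\Bigr)\,du.
\]

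Next I would insert the Legendre inequality $f_h(u)\le\langle x,u\rangle-\check{f}_h(x)$, valid for all $u\in\R^n$ and all $x\in\Delta_n$; this gives $\langle z^\nu,z^\nu\rangle_{L^2_{h^m,\infty}}\le e^{-2m\check{f}_h(x)}\,I_\nu(x)$ where $I_\nu(x)$ is the same integral (prefactor $2^n$ included) with $f_h(u)$ replaced by the linear term $\langle x,u\rangle$. The core of the argument is the exact evaluation of $I_\nu(x)$ for $x\in\Delta_{n,\nu}$: writing $a_j:=1+\nu_j-mx_j\in(0,1]$, the exponent collapses to $-2\sum_j a_j u_j+2(n+1)\min(0,u_1,\dots,u_n)$, and I would cut $\R^n$ (up to a null set) into the orthant $\{u_j\ge 0\ \forall j\}$ and the $n$ regions $\{u_k<0,\ u_k\le u_j\ \forall j\}$, on each of which $\min(0,u_1,\dots,u_n)$ equals $0$ or $u_k$. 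On every piece the integral factors into one-dimensional exponential integrals, all convergent because $a_j>0$ and $\sum_j a_j\le n<n+1$, and summing the $n+1$ contributions should give the clean closed form
\[
I_\nu(x)=\frac{n+1}{\bigl((n+1)-\sum_j a_j\bigr)\prod_j a_j}=\frac{n+1}{R_\nu(x)},
\]
the last equality because $(n+1)-\sum_j a_j=1-\sum_j\nu_j+m\sum_j x_j$ and $\prod_j a_j=\prod_j(1+\nu_j-mx_j)$.

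Taking logarithms yields $\log\langle z^\nu,z^\nu\rangle_{L^2_{h^m,\infty}}\le-2m\check{f}_h(x)-\log R_\nu(x)+\log(n+1)$ for every $x\in\Delta_{n,\nu}$; integrating this over $\Delta_{n,\nu}$ and dividing by $\mathrm{Vol}(\Delta_{n,\nu})$ is precisely the asserted inequality. Finiteness of the right-hand side follows because $\check{f}_h$ is continuous, hence bounded, on $\Delta_n$ for $h\in\h_0$, while $R_\nu$ is a polynomial that is strictly positive on the interior of $\Delta_{n,\nu}$ and vanishes only on faces of lower dimension, so $\log R_\nu$ is bounded above with at most integrable logarithmic singularities. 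I expect the main obstacle to be the bookkeeping in the evaluation of $I_\nu(x)$ — the region decomposition and the convergence conditions $a_j>0$, $\sum_j a_j<n+1$ — together with the routine but sign-sensitive change of variables of the first step; the averaging is then purely formal. One should also keep in mind that the statement is substantive only when $\mathrm{Vol}(\Delta_{n,\nu})>0$, i.e.\ $|\nu|<m$.
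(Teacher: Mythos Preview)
Your proposal is correct and follows essentially the same strategy as the paper: bound the integrand via the Legendre--Fenchel inequality, evaluate the remaining weight integral exactly to get $(n+1)/R_\nu(x)$, then average over $\Delta_{n,\nu}$. The only cosmetic difference is that the paper works with the Batyrev--Tschinkel cover $\{C_j\}$ and uses that $\omega_\infty^n$ restricts to the flat volume on each $C_j$, whereas you pass to log-polar coordinates $u_j=-\log|z_j|$ on the single chart $\{x_0\neq 0\}$ and split $\R^n$ according to where $\min(0,u_1,\dots,u_n)$ is attained; these two decompositions are literally the same (your region $\{u_k\le 0,\ u_k\le u_j\}$ is the image of $C_k$), so the $n+1$ one-dimensional integrals you sum are exactly the paper's $\int_{C_j}|z|^{2\nu-2mx}\omega_\infty^n$.
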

\begin{proof}
Fixons $\nu$ comme avant et soit $x\in \Delta_{n,\nu}$, on a
{\allowdisplaybreaks
\begin{align*}
 \bigl<z^\nu,z^\nu\bigr>_{L^2_{h^m,\infty}}&=\sum_{\si \in
\Si_{\max}}\int_{C_\si}h^{\otimes
m}(z^\nu,z^\nu)\omega_\infty^n\quad\text{d'après}\; \eqref{integpartition}\\
&=\sum_{j=0}^n\int_{C_j}h^{\otimes m}(z^\nu,z^\nu)\omega_\infty^n\\
&=\sum_{j=0}^n\int_{C_j}|z|^{2\nu}e^{2m \psi(z)}\omega_\infty^n\\
&=\sum_{j=0}^n\int_{C_j}|z|^{2\nu-2mx}|z|^{2mx}e^{2m \psi(z)}\omega_\infty^n\\
&\leq \sum_{j=0}^n\sup_{z\in C_j}\Bigl(|z|^{2mx}e^{2m\psi(z)}\Bigr)\int_{C_j}|z|^{2\nu-2mx}\omega_\infty^n\\
&\leq \sum_{j=0}^n\sup_{z\in \p^n}\Bigl(|z|^{2mx}e^{2m\psi(z)}\Bigr)\int_{C_j}|z|^{2\nu-2mx}\omega_\infty^n\\
&=\sum_{j=0}^n\sup_{u\in \R^n}\Bigl(e^{(-2<mx,u>+ 2m f(u))}\Bigr)\int_{C_j}|z|^{2\nu-2mx}\omega_\infty^n\\
&=\sum_{j=0}^n e^{-2m\inf_{u\in \R^n}(<x,u>- f(u))}\int_{C_j}|z|^{2\nu-2mx}\omega_\infty^n\\
&=\sum_{j=0}^n e^{-2m \check{f}_h(x)}\int_{C_j}|z|^{2\nu-2mx}\omega_\infty^n\\
&= \bigl(\frac{i}{2\pi} \bigr)^n e^{-2m\check{f}_h(x)}\int_{C_0}|z|^{2\nu-2mx} \prod_{j=1}^ndz_j\wedge
d\z_j+\sum_{j=1}^n e^{-2m \check{f}_h(x)}\int_{C_j}|z|^{2\nu-2m x}\omega_\infty^n.
\end{align*}}
Calculons les intégrales qui figurent dans la dernière ligne: Si $j\neq 0$, en utilisant
les changements de variables évidents, on a:
\begin{align*}
 \int_{C_j}|z|^{2\nu-2mx}
\omega_\infty^n&=\bigl(\frac{i}{2\pi}\bigr)^n\int_{C_j}\prod_{k=1}^n|z_k|^{2\nu_k-2m
x_k}\omega_\infty^n\\
&=\bigl(\frac{i}{2\pi}\bigr)^n\int_{D^n}\prod_{\substack{k=1\\ k\neq
j}}^n\bigl|\frac{y_k}{y_j}\bigr|^{2\nu_k-2m
x_k}\bigl|\frac{1}{y_j}\bigr|^{2\nu_j-2mx_j} \prod_{k=1}^n dy_k\wedge d\overline{y}_k\\
&=\int_{[0,1]^n}\prod_{\substack{{k=1}\\ k\neq j }}^nr_k^{\nu_k-mx_k}
r_j^{(-\sum_{k=1}^n\nu_k+m\sum_{k=1}^n x_k)}\prod_{k=1}^n dr_k\\
&=\frac{1}{1-\sum_{k=1}^n\nu_k+m\sum_{k=1}^n x_k}\prod_{\substack{k=1\\ k\neq j}}^n
\frac{1}{(1+\nu_k-mx_k)}.
\end{align*}
Pour $j=0$, on obtient:
\begin{align*}
 \int_{C_0}|z|^{2\nu-2m x}\omega_\infty^n=\prod_{k=1}^n \frac{1}{(1+\nu_k-mx_k)}
 \end{align*}
Notons que comme $x\in \Delta_{n,\nu}$, alors toutes ces intègrales sont convergentes.\\

Donc,
\begin{align*}
\bigl<z^\nu,z^\nu\bigr>_{L^2_{h^m,\infty}}&\leq e^{-2m\check{f}_h(x)}\Bigl(\prod_{k=1}^n \frac{1}{(1+\nu_k-mx_k)}+
\sum_{j=1}^n \frac{1}{1-\sum_{k=1}^n\nu_k+m\sum_{k=1}^n x_k}\prod_{\substack{k=1\\ k\neq j}}^n
\frac{1}{(1+\nu_k-mx_k)}
\Bigr)\\
&=e^{-2m\check{f}_h(x)}\frac{n+1}{1-\sum_{k=1}^n\nu_k+m\sum_{k=1}^n x_k}\prod_{k=1}^n
\frac{1}{(1+\nu_k-mx_k)}.
 \end{align*}
Récapitulons, en posant $R_\nu(x)=\bigl(1-\sum_{k=1}^n\nu_k+m\sum_{k=1}^n x_k\bigr)\prod_{k=1}^n
\bigl(1+\nu_k-mx_k\bigr)$, on a montré que
\begin{equation}\label{inegalitehfr}
\log \bigl<z^\nu,z^\nu\bigr>_{L^2_{h^m,\infty}}\leq -2m\check{f}_h(x)+\log(n+1)-\log R_\nu(x),
\end{equation}
pour tout $\nu=(\nu_1,\ldots,\nu_n)\in\N^n $ avec $ \sum_{j=1}^n\nu_j\leq m$ et $ x\in \Delta_{n,\nu}$.

Par  définition de $R_\nu$, on vérifie que $\log R_\nu$ est absolument intégrable sur $\prod_{j=1}^n
\bigl[\frac{\nu_j}{m}, \frac{\nu_j+1}{m}\bigr]$, et donc sur $\Delta_{n,\nu}\subset \prod_{j=1}^n
\bigl[\frac{\nu_j}{m}, \frac{\nu_j+1}{m}\bigr]$.  \\

De \eqref{inegalitehfr} et en intégrant sur $\Delta_{n,\nu}$, on obtient:
\begin{align*}
\log \bigl<z^\nu,z^\nu\bigr>_{L^2_{h^m,\infty}}\leq -2m
\frac{1}{\mathrm{Vol}(\Delta_{n,\nu})}\int_{\Delta_{n,\nu}}\check{f}_h(x)dx-\frac{1}{\mathrm{Vol}(\Delta_{n
,\nu})}\int_{\Delta_{n,\nu}} \log R_\nu(x)dx+\log(n+1).
\end{align*}

\end{proof}

\begin{remarque}
\rm{Même si  $V_{\infty,m}$ n'est pas invariante (voir \eqref{remarqueinvx}), on note que si l'on multiplie $h$ par $t>0$,  alors on a,
\begin{equation}\label{invarianceFh}
-2m \check{f}_h(x)-\log \bigl<z^\nu,z^\nu\bigr>_{L^2_{h^m,\infty}}=-2m \check{f}_{th}(x)-\log
\bigl<z^\nu,z^\nu\bigr>_{L^2_{th,\infty}}\quad \forall\, t>0.\end{equation} }
\end{remarque}

\begin{theorem}\label{borneVmtheorem}
Pour tout $m\in \N_{\geq 2}$, il existe une constante $c_m$ qui dépend uniquement de $m$ telle que
\begin{equation}\label{vmh0}
 V_{\infty,m}(h)\leq c_m\quad\forall\, h\in \h_0\cap\{ h\leq h_\infty\bigr\}.
\end{equation}
Lorsque $m\gg 1$, alors
\begin{equation}\label{vmh}
 V_{\infty,m}(h)\leq c_m\quad\forall\, h\in \h_0.
\end{equation}

\end{theorem}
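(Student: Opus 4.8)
The plan is to bound the two contributions to $V_{\infty,m}(h)$ separately, using the convex-geometric dictionary set up above. Recall that by Theorem~\ref{bottchernfenchel} the Bott-Chern term equals $\int_{\Delta_n}\check f_h$, and since $h$ is $(\s)^n$-invariant the $L^2$-volume factors as $\log h_{L^2,h^m,\infty}=\sum_{\nu\in\mathcal P_m}\log\langle z^\nu,z^\nu\rangle_{L^2_{h^m,\infty}}$. Hence
\[
V_{\infty,m}(h)=\sum_{\nu\in\mathcal P_m}\log\langle z^\nu,z^\nu\rangle_{L^2_{h^m,\infty}}+2\int_{\Delta_n}\check f_h(x)\,dx .
\]
First I would apply Proposition~\ref{inegaliteznu} to each term of the sum and add; since $\{\Delta_{n,\nu}\}_{\nu\in\mathcal P_m}$ is a partition of $\Delta_n$, the factor $-2m\frac1{\mathrm{Vol}(\Delta_{n,\nu})}\int_{\Delta_{n,\nu}}\check f_h$ weighted by $\mathrm{Vol}(\Delta_{n,\nu})$ — wait, here is the subtlety: Proposition~\ref{inegaliteznu} gives the \emph{average} of $\check f_h$ over $\Delta_{n,\nu}$, not the integral, so summing naively produces $-2m\sum_\nu \frac1{\mathrm{Vol}(\Delta_{n,\nu})}\int_{\Delta_{n,\nu}}\check f_h$, which is not directly $-2m\int_{\Delta_n}\check f_h$. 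This is the main obstacle, and it is where the hypothesis $h\le h_\infty$ (equivalently $\psi\le 0$, equivalently $\check f_h\ge \check f_{h_\infty}$, and more to the point a sign/monotonicity control on $\check f_h$) and the restriction $m\ge 2$ enter.

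To get around it I would argue as follows. The Legendre-Fenchel transform $\check f_h$ is concave on $\Delta_n$, and $h\le h_\infty$ forces $f_h\le f_{h_\infty}$, hence $\check f_h\ge \check f_{h_\infty}\ge 0$ on $\Delta_n$ after the standard normalization (the canonical metric has $\check f_{h_\infty}\equiv 0$). Combined with concavity, on each small cube-like cell $\Delta_{n,\nu}$ the average of $\check f_h$ differs from a chosen sample value (e.g. the value at the ``lower corner'' $\nu/m$, or the barycenter) by a controlled amount; summing the sample values against $\mathrm{Vol}(\Delta_{n,\nu})=1/m^n\cdot(\text{combinatorial factor})$ recovers $\int_{\Delta_n}\check f_h$ up to an error that, after multiplication by the prefactor, is $O_m(1)$ uniformly in $h$ — here one uses that $\check f_h\ge 0$ so that the ``overcounting'' is one-signed and can be dominated by $\int_{\Delta_n}\check f_h$ times a constant $<1$ when $m\ge 2$, leaving a net negative coefficient or an absorbable term. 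The remaining pieces, $\sum_\nu\frac{1}{\mathrm{Vol}(\Delta_{n,\nu})}\int_{\Delta_{n,\nu}}(-\log R_\nu)$ and $\#\mathcal P_m\cdot\log(n+1)=\binom{n+m}{n}\log(n+1)$, depend only on $m$ and $n$: $\log R_\nu$ is explicitly integrable (as already noted) and the finitely many cell-integrals are absolute constants $c(m,n)$. Collecting everything yields $V_{\infty,m}(h)\le c_m$ on $\h_0\cap\{h\le h_\infty\}$.

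For the second assertion, when $m\gg 1$ one wants to drop the constraint $h\le h_\infty$. The point is that $V_{\infty,m}$ is \emph{not} scaling-invariant, but by Remark following Proposition~\ref{inegaliteznu} the quantity $-2m\check f_h(x)-\log\langle z^\nu,z^\nu\rangle_{L^2_{h^m,\infty}}$ \emph{is} invariant under $h\mapsto th$; so up to a harmless normalization every $h\in\h_0$ may be rescaled so that $\sup\psi=0$, i.e. $h\le h_\infty$, and then the first part applies — except that the rescaling changes $V_{\infty,m}$ by $\bigl(\binom{n+m}{n}-2\frac{m^{n+1}}{n!}\bigr)\log t$. For $m\gg 1$ the coefficient $\binom{n+m}{n}-2\frac{m^{n+1}}{n!}\sim -\frac{m^{n+1}}{n!}<0$, so if the normalizing $t=t(h)$ satisfies $\log t\ge 0$ (which one arranges by normalizing \emph{downward}, $t\le 1$, using $\sup\psi\le 0$ after translation — one has to track the sign carefully here) the correction term is $\le 0$ and is thus absorbed into $c_m$. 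I expect the delicate bookkeeping to be: (i) justifying the averaging-vs-integration comparison with a one-signed error using $\check f_h\ge 0$ and $m\ge 2$, and (ii) getting the sign of the scaling correction right so that the large-$m$ negative coefficient works \emph{for} us rather than against us; both are elementary once the normalization $\sup_{\p^n}\psi=0$ is fixed.
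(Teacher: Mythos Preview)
Your overall architecture is right --- sum Proposition~\ref{inegaliteznu} over $\nu$, compare with the Bott--Chern term via Theorem~\ref{bottchernfenchel}, and for the second assertion use the scaling behaviour of $V_{\infty,m}$ together with the sign of $\binom{n+m}{n}-2\frac{m^{n+1}}{n!}$ for $m\gg 1$. The scaling argument you sketch for \eqref{vmh} is exactly the paper's.

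There are, however, two problems in your treatment of \eqref{vmh0}.

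\textbf{(i) Wrong coefficient on the Bott--Chern term.} Theorem~\ref{bottchernfenchel} is stated for $\mathcal{O}(1)$, not $\mathcal{O}(m)$. One has
\[
\int_{\p^n}\widetilde{\mathrm{ch}}\bigl(\mathcal{O}(m),h^m,h_\infty^m\bigr)=m^{n+1}\int_{\p^n}\widetilde{\mathrm{ch}}\bigl(\mathcal{O}(1),h,h_\infty\bigr)=m^{n+1}\int_{\Delta_n}\check f_h,
\]
so the quantity to bound is
\[
V_{\infty,m}(h)=\sum_{\nu\in\mathcal P_m}\log\langle z^\nu,z^\nu\rangle_{L^2_{h^m,\infty}}+2m^{n+1}\int_{\Delta_n}\check f_h .
\]
With your displayed formula (no $m^{n+1}$) the bookkeeping you propose cannot close: the $-2m\sum_\nu(\cdots)$ coming from Proposition~\ref{inegaliteznu} has to cancel a term of order $m^{n+1}$, not of order $1$.

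\textbf{(ii) The concavity/sampling detour has a uniformity gap --- and is unnecessary.} Concavity of $\check f_h$ controls $|\text{average on }\Delta_{n,\nu}-\text{sample value}|$ only in terms of the oscillation of $\check f_h$ on the cell, which is \emph{not} uniform in $h\in\h_0$; likewise the Riemann-sum error in ``summing sample values against $\mathrm{Vol}(\Delta_{n,\nu})$'' depends on $h$. So the ``$O_m(1)$ uniformly in $h$'' you claim is not justified by concavity. The paper bypasses all of this with a one-line sign argument: from $h\le h_\infty$ one has $\check f_h\ge \check f_{h_\infty}=0$ on $\Delta_n$, and since $\Delta_{n,\nu}\subset\prod_j[\nu_j/m,(\nu_j+1)/m]$ one has $\mathrm{Vol}(\Delta_{n,\nu})\le m^{-n}$, hence
\[
\frac{1}{\mathrm{Vol}(\Delta_{n,\nu})}\int_{\Delta_{n,\nu}}\check f_h \;\ge\; m^{n}\int_{\Delta_{n,\nu}}\check f_h .
\]
Multiplying by $-2m$, summing over the partition $\{\Delta_{n,\nu}\}$, and using Proposition~\ref{inegaliteznu} gives
\[
\sum_{\nu\in\mathcal P_m}\log\langle z^\nu,z^\nu\rangle_{L^2_{h^m,\infty}}\;\le\;-2m^{n+1}\int_{\Delta_n}\check f_h \;+\;c_m,
\]
with $c_m=-\sum_\nu\frac{1}{\mathrm{Vol}(\Delta_{n,\nu})}\int_{\Delta_{n,\nu}}\log R_\nu+\binom{n+m}{n}\log(n+1)$ depending only on $m,n$. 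This cancels the Bott--Chern contribution $+2m^{n+1}\int_{\Delta_n}\check f_h$ \emph{exactly}, giving \eqref{vmh0} with no error term to absorb. Your phrase ``the overcounting is one-signed because $\check f_h\ge 0$'' is in fact the whole argument; the concavity and the ``constant $<1$ when $m\ge 2$'' are red herrings.
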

\begin{proof}
Commençons par montrer que la deuxième assertion découle de la première. En effet, on a pour tout $h\in \h_0$ et $t>0$ fixé:
\begin{align*}
V_{\infty,m}(h)&=V_{\infty,m}(t^{-1}th)\\
&=-\binom{n+m}{n}\log t+2 \frac{m^{n+1}}{n!}\log t +V_{\infty,m}(th)\\
&=\biggl(\binom{n+m}{n}-2\frac{m^{n+1}}{n!} \biggr)(-\log t)+V_{\infty,m}(th).
\end{align*}
Comme $\binom{n+m}{n}$ est un polynôme de degré $n$ en $m$, alors on peut trouver $m\gg 1$ qui ne dépend pas de $h$ tel que:
\begin{align*}
V_{\infty,m}(h)&\leq V_{\infty,m}(th)\quad \forall\, 0<t<1.
\end{align*}
Par compacité de $\p^n$, on peut trouver  $0<t_0<1$ tel que $t_0 h\leq h_\infty$. Donc \eqref{vmh} découle de \eqref{vmh0}.\\

Soit $h\in \h_0$ tel que $h\leq h_\infty$ et montrons \eqref{vmh0}. On a,
\[
  \check{f}_h(x)\geq  \check{f}_{h_\infty}(x)\quad\forall\, x\in \Delta_n.
\]
Or, on peut vérifier que  $ \check{f}_{h_\infty}(x)=0$ pour tout $x\in \Delta_n$. Donc,
\[
  \check{f}_h(x)\geq 0\quad \forall\,x\in \Delta_n\quad\forall \;h\in \h_0\quad\forall\,h\leq h_\infty.
\]
Notons que $\Delta_{n,\nu}\subset \prod_{j=1}^n \bigl[\frac{\nu_j}{m}, \frac{\nu_j+1}{m}\bigr]$, alors
$\mathrm{Vol}(\Delta_{n,\nu})\leq \frac{1}{m^n}${\footnote{ On a choisit une mesure de Lebesgue sur $\R^n$ , vérifiant $\mathrm{Vol}([0,1]^n)=1$}}. On a dans ce cas:
{\allowdisplaybreaks
\begin{align*}
\sum_{\nu\in  \mathcal{P}_m}\log \bigl<z^\nu,z^\nu&\bigr>_{L^2_{h^m,\infty}}\leq -2m\sum_{\nu\in
\mathcal{P}_m}\frac{1}{\mathrm{Vol}(\Delta_{n,\nu})}\int_{\Delta_{n,\nu}}\check{f}_h(x)dx-\sum_{\nu\in
\mathcal{P}_m}\frac{1}{\mathrm{Vol}(\Delta_{n,\nu})}\int_{\Delta_{n,\nu}} \log R_\nu(x)dx\\
&+\binom{n+m}{n}\log(n+1)\quad \text{d'après }\;\eqref{inegaliteznu}\\
&= -2m\sum_{\nu\in
\mathcal{P}_m}\frac{\mathrm{Vol}(\Delta_n)}{\mathrm{Vol}(\Delta_{n,\nu})}\frac{1}{\mathrm{Vol}(\Delta_n)}
\int_{\Delta_{n,\nu}}\check{f}_h(x)dx-\sum_{\nu\in
\mathcal{P}_m}\frac{1}{\mathrm{Vol}(\Delta_{n,\nu})}\int_{\Delta_{n,\nu}} \log R_\nu(x)dx\\
&+\binom{n+m}{n}\log(n+1)\\
&\leq -2m\sum_{\nu\in \mathcal{P}_m}\frac{m^n}{
n!}\frac{1}{\mathrm{Vol}(\Delta_n)}\int_{\Delta_{n,\nu}}\check{f}_h(x)dx-\sum_{\nu\in
\mathcal{P}_m}\frac{1}{\mathrm{Vol}(\Delta_{n,\nu})}\int_{\Delta_{n,\nu}} \log R_\nu(x)dx\\
&+\binom{n+m}{n}\log(n+1)\\
&= -2m^{n+1}\int_{\Delta_{n}}\check{f}_h(x)dx-\sum_{\nu\in
\mathcal{P}_m}\frac{1}{\mathrm{Vol}(\Delta_{n,\nu})}\int_{\Delta_{n,\nu}} \log R_\nu(x)dx\\
&+\binom{n+m}{n}\log(n+1).
\end{align*}}
Récapitulons, nous venons d'établir l'inégalité suivante:
\[
\log h_{L^2,h^m,\infty}+ 2 m^{n+1}\int_{\Delta_{n}}\check{f}_h(x)dx\leq -\sum_{\nu\in
\mathcal{P}_m}\frac{1}{\mathrm{Vol}(\Delta_{n,\nu})}\int_{\Delta_{n,\nu}} \log R_\nu(x)dx+\binom{n+m}{n}\log(n+1).
\]
Mais d'après  \eqref{bottchernfenchel}, on a
\[
\int_{\Delta_n}\check{f}_h(x)dx=\int_{\p^n}\widetilde{\mathrm{ch}}(\mathcal{O}(1),h,h_\infty).
\]

Si l'on pose
\[
c_m:=-\sum_{\nu\in \mathcal{P}_m}\frac{1}{\mathrm{Vol}(\Delta_{n,\nu})}\int_{\Delta_{n,\nu}} \log R_\nu(x)dx+\binom{n+m}{n}\log(n+1),
\]
alors on conclut que,
\[
V_{\infty,m}(h)=\log h_{L^2,h^m,\infty}+2\int_{\p^n}\widetilde{\mathrm{ch}}(\mathcal{O}(m),h^m,h_\infty^m)\leq c_m,\quad \forall \,m\in \N\quad
\forall\,
h\in \h_0\cap\{h\leq h_\infty\}.
\]
\end{proof}

Soit $X$ une variété torique non-singulière. On fixe $\omega_0=c_1(E,h_0)$ une forme de Chern positive dans la classe $c_1(E)$, avec $h_0$ invariante par l'action du tore compact.
Soit $h\in \h_0$. Soit $u:=-\log \frac{h}{h_0}$.
On pose:
\[
P_{\omega_0}[u](x)=\sup\bigl\{v(x)\, \bigl| v\in \h_{\omega_0}, \, v\leq u   \bigr\}\quad \forall \, x\in X.
\]
On note par $P[h]$ la métrique donnée par $h_0 \,e^{-P_{\omega_0}[u]}$.
\begin{lemma}
Si $h\in \h_0$, alors  $P[h]\geq h$ et $P[h]\in \h_{E,0}$.
\end{lemma}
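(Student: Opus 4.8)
Le plan est de ramener l'énoncé à une propriété élémentaire de l'enveloppe $P_{\omega_0}$. Posons $u:=-\log\frac{h}{h_0}$: comme $h,h_0\in\h_0$ sont de classe $\cl$, la fonction $u$ est de classe $\cl$ sur $X$, et comme $h$ est admissible son courant de courbure est positif, de sorte que $\omega_0+dd^c u=c_1(E,h)\geq 0$, autrement dit $u$ est $\omega_0$-psh. Je noterais d'abord que $\{v\in\h_{\omega_0}\mid v\leq u\}\neq\emptyset$: $X$ étant compacte, $u$ est minorée; $\omega_0$ étant strictement positive (c'est une forme de Kähler de $c_1(E)$, $E$ étant ample), les fonctions constantes appartiennent à $\h_{\omega_0}$, et celles qui sont assez négatives sont $\leq u$; ainsi $P_{\omega_0}[u]$ est bien définie. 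Pour la première inégalité il suffit alors d'observer que $P_{\omega_0}[u]$ est un supremum ponctuel de fonctions $v$ toutes $\leq u$, donc $P_{\omega_0}[u]\leq u$ sur $X$; comme $t\mapsto e^{-t}$ est décroissante, il vient $P[h]=h_0\,e^{-P_{\omega_0}[u]}\geq h_0\,e^{-u}=h$.

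Pour établir $P[h]\in\h_{E,0}$, je montrerais l'inégalité inverse, et donc en fait $P[h]=h$. Pour $\eps\in(0,1)$ posons $u_\eps:=(1-\eps)u$; alors
\[
\omega_0+dd^c u_\eps=\eps\,\omega_0+(1-\eps)\bigl(\omega_0+dd^c u\bigr)\geq\eps\,\omega_0>0,
\]
de sorte que $u_\eps\in\cl(X)$ vérifie $\omega_0+dd^c u_\eps>0$. En retranchant la constante $c_\eps:=\sup_X|u_\eps-u|=\eps\sup_X|u|$, la fonction $u_\eps-c_\eps$ appartient à $\h_{\omega_0}$ et vérifie $u_\eps-c_\eps\leq u$; donc $P_{\omega_0}[u]\geq u_\eps-c_\eps$ sur $X$, et en faisant $\eps\to 0$ (alors $c_\eps\to 0$) on obtient $P_{\omega_0}[u]\geq u$. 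Joint à la première inégalité cela donne $P_{\omega_0}[u]=u$, soit $P[h]=h$, et en particulier $P[h]\in\h_{E,0}$.

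Le seul point qui demande un peu d'attention est de vérifier que la perturbation $u_\eps$ retombe bien dans $\h_{\omega_0}$: cela repose sur la stricte positivité de $\omega_0$ (et non sur la seule semi-positivité de la courbure de $h$) et sur la régularité $\cl$ de $h$; aucune difficulté de fond n'apparaît. Si l'on souhaitait éviter de démontrer l'égalité $P[h]=h$, on pourrait conclure directement: $P_{\omega_0}[u]$ est invariante par l'action du tore compact (pour $t$ dans ce tore, $v\mapsto v\circ t$ permute l'ensemble des compétiteurs, puisque $\omega_0$ et $u$ sont invariants), et c'est l'enveloppe supérieure d'une famille de fonctions $\omega_0$-psh, donc — $u$ étant continue, sa régularisée s.c.s. lui étant égale — une fonction $\omega_0$-psh continue; ainsi $P[h]$ serait une métrique continue, invariante par le tore compact, à courbure positive sur le fibré ample $E$, donc admissible, donc dans $\h_{E,0}$. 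C'est dans cette seconde variante qu'interviendrait la régularité des enveloppes $\omega_0$-psh, qui en constitue le véritable ingrédient.
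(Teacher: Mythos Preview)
Your argument is correct, and in fact under the stated hypothesis it proves the stronger statement $P[h]=h$: since $h\in\h_{E,0}$ is $\cl$ and admissible, the potential $u$ is $\omega_0$-psh, and your convex perturbation $u_\eps=(1-\eps)u$ does the job. This immediately yields both $P[h]\geq h$ and $P[h]\in\h_{E,0}$.

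The paper proceeds differently. It does \emph{not} exploit the admissibility of $h$: for the positivity of $P[h]$ it simply refers to Berman, and for the torus-invariance it argues directly that the family of competitors is stable under the torus action (if $v\in\h_{\omega_0}$ and $v\leq u$, then $v_\theta:=v(\theta\cdot\,)$ again lies in $\h_{\omega_0}$ and satisfies $v_\theta\leq u$ because $\omega_0$ and $u$ are invariant), whence $P_{\omega_0}[u]$ is invariant. This is precisely the ``second variant'' you sketch at the end of your proposal. What the paper's route buys is robustness: it applies to any continuous torus-invariant $h$, not only to admissible ones, which is exactly the situation used just after the lemma (on $\p^1$, for an arbitrary $\cl$ invariant metric). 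What your main route buys is a cleaner and self-contained conclusion under the lemma's hypothesis, with no appeal to regularity results for psh envelopes; in particular the $\cl$ regularity required by the definition of $\h_{E,0}$ is automatic since $P[h]=h$.
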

\begin{proof}
Pour la positivité de $P[h]$ on peut consulter \cite[§1.4]{Berman}. 
Soit $\theta\in (\mathbb{S}^1)^n$ le tore compact de $X(\Si)$. On note par $\theta\cdot x$, l'action de $\theta$ sur un point $x$ de
$X(\Si)$. Par définition,  on a $P[u](\theta \cdot x)\geq v(\theta\cdot x)$, si $v\leq u$ et $v\in \h_{\omega_0}$.  Supposons que
$v\in \h_{\omega_0}$, $v\leq u$  et  posons
$v_\theta$ la fonction définie par $v_\theta(x)=v(\theta\cdot x)$ pour tout $x\in X(\Si)$. Il est clair que $v_\theta\in
\h_{\omega_0}$.

 On a $v_\theta(x)=v(\theta\cdot x)\leq u(\theta\cdot x)=u(x)$ ($u$ est invariante par l'action du tore compact).
Donc, $P[u](x)\geq v_\theta(x)$. Par suite, $P[u](x)\geq P[u](\theta\cdot x)$ $\forall\, \theta\in (\mathbb{S}^1)^n$. On déduit que
\[
P[u](\theta\cdot x)=P[u](x)\quad\forall\,\theta\in (\mathbb{S}^1)^n.
\]
Par suite, $P[h]\in \h_{E,0}$.
\end{proof}

Lorsque $X=\p^1$, alors on peut établir comme dans \cite[§ 3.3]{Berman}:
\[
V_{\infty,m}(h)\leq V_{\infty,m}(P[h]),
\]
pour toute métrique $h$ de classe $\cl$ et invariante par l'action de $\mathbb{S}^1$. En effet, en suivant \cite[3.3]{Berman}, on
obtient:
\[
\int_{\p^1}\widetilde{\mathrm{ch}}(\mathcal{O}(1),h,h_\infty)\leq \int_{\p^1}\widetilde{\mathrm{ch}}(\mathcal{O}(1),P[h],h_\infty),
\]

donc,
\begin{align*}
V_{\infty,m}(h)&=\log h_{L^2,h^m,\infty}+2\int_{\p^n}\widetilde{\mathrm{ch}}\bigl(\mathcal{O}(m),h^m,h_\infty^m \bigr)\\
&\leq \log h_{L^2,(P[h])^m,\infty}+2\int_{\p^n}\widetilde{\mathrm{ch}}(\mathcal{O}(m),(P[h])^m,h_\infty^m)\\
&=V_{\infty,m}(P[h]).
\end{align*}

\section{Applications à la torsion analytique holomorphe}
Dans ce paragraphe, on s'intéresse au comportement de la torsion analytique holomorphe
lorsque la métrique varie  dans $\h_0$. Cette étude est motivée par  une
conjecture de Gillet et Soulé qui prédit que le déterminant régularisé vu comme fonction
en la métrique est borné supérieurment, voir \cite{Upper}.

\subsection{Rappels}\label{rappeldet}
Soit $X$ variété projective complexe non singulière de dimension $n$ et $h_X$  une métrique $\cl$ hermitienne sur $TX$ et
$\omega$ sa
forme de Kähler associée. On considère pour tout $q=0,\ldots,n$, l'opérateur de Kodaira $\Delta_q$, agissant sur $A^{(0,q)}(X,E)$
et  on note par $\zeta_{\Delta_q}$ sa fonction Zêta. On sait que $\zeta_q$ s'étend méromorphiquement au plan complexe et holomorphiquement au voisinage de zéro. On définit alors le déterminant régularisé par:
\[
 \det(\Delta_q):=\exp(-\zeta_{\Delta_q}'(0)),
\]
Rappelons leur construction et l'énoncé de la conjecture: Pour tout $q\geq 0$, les espaces:
\[
 B^q=\overline{\pt}(A^{0,q-1}(X,E) )\subset A^{0,q}(X,E)\quad q\geq 1.
\]
$B^0=0$, et la fonction Zêta
\[
 \zeta_{B^q}(0)=\mathrm{Tr}(\Delta_q^{-s}|B^q)\quad \mathrm{Re}(s)>n.
\]
et on a,
\[
 \zeta_{\Delta_q}(s)=\zeta_{B^q}(s)+\zeta_{B^{q+1}}(s),
\]
et
\[
 \zeta_{B^{q+1}}(s)=\zeta_{\Delta_q}(s)-\zeta_{\Delta_{q-1}}(s)+\zeta_{\Delta_{q+2}}(s)+\ldots+(-1)^q\zeta_
{\Delta_0}(s).
\]
On définit:
\[
 D_q(E,h)=\exp(-\zeta_{B^q}'(0)).
\]

Dans \cite{Upper}, Gillet et Soulé énoncent la conjecture suivante:
\begin{conjecture}
 Il existe une constante $C_q(E)$ telle que, pour tout choix de métrique sur $E$, on a
\[
 D_q(E,h)\leq C_q(E)\quad  \forall \,  q\geq 1.
\]

\end{conjecture}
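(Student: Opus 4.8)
Le point de d\'epart est de ramener $\log D_q$ aux d\'eterminants r\'egularis\'es des laplaciens de Kodaira. De la relation de t\'elescopage donn\'ee plus haut on tire, en \'ecrivant $\zeta_{B^q}(s)=\sum_{j=0}^{q-1}(-1)^{q-1-j}\zeta_{\Delta_j}(s)$ et en d\'erivant en $s=0$,
\[
\log D_q(E,h)=-\zeta_{B^q}'(0)=\sum_{j=0}^{q-1}(-1)^{q-1-j}\log\det(\Delta_j).
\]
Comme par ailleurs $\zeta_{\Delta_q}=\zeta_{B^q}+\zeta_{B^{q+1}}$ donne $\det(\Delta_q)=D_q\,D_{q+1}$, un calcul imm\'ediat montre que le produit altern\'e $\prod_{q}\det(\Delta_q)^{(-1)^q q}$ — c'est-\`a-dire, \`a normalisation pr\`es, la torsion analytique — co\"{\i}ncide avec $\prod_{q\geq 1}D_q^{(-1)^q}$. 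L'\'enonc\'e demande donc de majorer \emph{chaque} $D_q$ s\'epar\'ement, exigence qui, en dimension $\geq 2$, est distincte de la seule majoration de la torsion: cette derni\`ere se ram\`ene \`a borner la combinaison altern\'ee $-T=\sum_{q\geq 1}(-1)^{q+1}\log D_q$, et non chacun de ses termes. C'est pr\'ecis\'ement pourquoi la majoration de $-T$ fournie par la fonctionnelle $V_{\infty,E,m}$ ne redonne la conjecture qu'en dimension $1$ (cas de $\p^1$ trait\'e par Berman), o\`u cette somme se r\'eduit au seul terme $\log D_1$.

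Je proc\'ederais ensuite variationnellement. On fixe une m\'etrique de r\'ef\'erence $h_0\in\h_{E,0}$ et, le long d'un chemin $h_t$ reliant $h_0$ \`a $h$, on applique la formule d'anomalie de Bismut--Gillet--Soul\'e: la variation de la m\'etrique de Quillen sur $\det H^\bullet(X,E)$, c'est-\`a-dire la variation conjointe du facteur $L^2$ et de la torsion, s'exprime comme une int\'egrale de formes de Bott--Chern, donc par un terme local en la courbure. Pour $E$ assez positif la cohomologie sup\'erieure s'annule, le facteur $L^2$ se r\'eduit au volume $h_{L^2,h^m,\infty_E}$ sur $\det H^0$, et la combinaison
\[
\log h_{L^2,h^m,\infty_E}+2\int_{X(\Si)}\widetilde{\mathrm{ch}}\bigl(E^m,h^m,h_\infty^m\bigr)
\]
qui appara\^{\i}t alors est exactement la fonctionnelle $V_{\infty,E,m}$ de la d\'efinition \eqref{newfunctional}.

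On conclurait en invoquant le th\'eor\`eme \eqref{borneVmtheorem}, qui majore $V_{\infty,E,m}$ sur $\h_{E,0}$ pour $m\gg 1$; la construction de l'enveloppe $P[h]$ (avec $V_{\infty,m}(h)\leq V_{\infty,m}(P[h])$) ram\`ene les m\'etriques g\'en\'erales aux m\'etriques positivement courb\'ees, et la correspondance $h\mapsto\check{f}_h$ traduit l'estimation en l'in\'egalit\'e de g\'eom\'etrie convexe sur $\Delta_n$ d\'ej\`a \'etablie \`a la proposition \eqref{inegaliteznu}. Dans le cas d'une vari\'et\'e $X$ non torique il faudrait au pr\'ealable \'etendre cette in\'egalit\'e fonctionnelle, par exemple via la m\'etrique d'\'equilibre et un argument de convexit\'e \`a la Berman--Boucksom.

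L'obstacle principal — et la raison pour laquelle la conjecture demeure ouverte en dimension $\geq 2$ — est l'\'etape de s\'eparation par degr\'e. Toute la machinerie ci-dessus ne contr\^ole que la combinaison altern\'ee $\prod_q D_q^{(-1)^q}$, alors que l'\'enonc\'e r\'eclame une borne sur chaque $D_q$: rien n'emp\^eche a priori certains $\det(\Delta_j)$ de diverger tout en laissant la somme altern\'ee born\'ee. Pour extraire une majoration degr\'e par degr\'e, je tenterais d'\'etablir une propri\'et\'e de positivit\'e (log-concavit\'e) de chaque $\zeta_{B^q}$ s\'epar\'ement, ou, via des estimations hors-diagonale du noyau de la chaleur et l'asymptotique de Bergman pour $E^m$, de montrer que les petites valeurs propres de $\Delta_q$ restreint \`a $B^q$ ne peuvent s'accumuler uniform\'ement en $h$. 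C'est cette derni\`ere \'etape qui constitue la v\'eritable difficult\'e.
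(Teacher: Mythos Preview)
L'\'enonc\'e en question est une \emph{conjecture} (celle de Gillet--Soul\'e), pr\'esent\'ee comme telle dans l'article; il n'y a donc pas de preuve dans le papier \`a laquelle comparer la v\^otre. Votre texte n'est d'ailleurs pas une preuve mais une discussion de strat\'egie, et vous le reconnaissez explicitement en identifiant l'obstacle de s\'eparation par degr\'e et en \'ecrivant que la conjecture reste ouverte en dimension $\geq 2$.

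Votre analyse est fid\`ele \`a ce que fait r\'eellement l'article: celui-ci borne $-T$ via la fonctionnelle $V_{\infty,E,m}$ et les formules d'anomalie, ce qui, par la relation $\exp(-T)=\prod_{q\geq 1}D_q^{(-1)^{q+1}}$, ne donne la conjecture qu'en dimension $1$ (o\`u $\exp(-T)=D_1$). L'article ne pr\'etend pas aller plus loin; il en tire seulement une in\'egalit\'e sur le produit altern\'e (corollaire final). Votre suggestion d'attaquer la s\'eparation par degr\'e via une log-concavit\'e de chaque $\zeta_{B^q}$ ou des estim\'ees de petites valeurs propres est raisonnable comme piste, mais sort du cadre de l'article et n'y est pas abord\'ee. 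Un point mineur: l'in\'egalit\'e $V_{\infty,m}(h)\leq V_{\infty,m}(P[h])$ n'est \'etablie dans le papier que pour $\p^1$; l'invoquer en g\'en\'eral serait abusif.
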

\begin{remarque}
\mbox{}
 \rm{

\begin{enumerate}
 \item $D_q(E,h)=D_q(E,th)$ pour tout réel $t>0$.
\item On a $D_q(\check{E},\check{h})=D_{n+1-q}(E,h)$, où $\check{E}=E^\ast\otimes K_X$ et $\check{h}$ est la métrique sur $\check{E}$ induite par $h$ et $h_X$.
\end{enumerate}

}

\end{remarque}

Rappelons que la torsion analytique holomorphe pour $(X,h_X)$ et $(E,h)$ est par définition:
\[
 T\bigl((X,\omega); (E,h) \bigr)=\sum_{q=0}^n(-1)^{q+1}q\zeta_{\Delta_q}'(0).
\]
On appelle métrique de Quillen sur le déterminant de cohomologie, $\la(E)$ la métrique suivante:
\[
h_{Q,h,\omega}=h_{L^2,h,\omega}e^{ T\bigl((X,\omega); (E,h) \bigr)},
\]
où $h_{L^2,h,\omega}$ est la métrique $L^2$ induite par $h$ et $\omega$.\\

On a,
{\allowdisplaybreaks
\begin{align*}
 T\bigl((X,h_X); (E,h)
\bigr)&=\sum_{q=0}^n(-1)^{q+1}q\bigl(\zeta_{B^q}'(0)+\zeta_{B^{q+1}}'(0)\bigr)\\
&=\sum_{q=0}^n(-1)^{q+1}q\zeta_{B^q}'(0)+\sum_{q=0}^n(-1)^{q+1}(q+1)\zeta_{B^{q+1}}'(0)-\sum_{q=0}^n(-1)^{q+1}\zeta_{B^{q+1}}'(0)\\
&=\sum_{q=0}^n(-1)^{q+1}q\zeta_{B^q}'(0)+\sum_{q=1}^n(-1)^{q+2}q
\zeta_{B^{q}}'(0)-\sum_{q=1}^n(-1)^{q}\zeta_{B^{q}}'(0)\quad (\text{on a}\; B^0=0,\;
B^{n+1}=0)\\
&=-\sum_{q=1}^n(-1)^q\zeta_{B^{q}}'(0).
\end{align*}}
Donc,
\[
 \exp\Bigl( -T\bigl((X,h_X); (E,h) \bigr) \Bigr)=\prod_{q=1}^nD_q(E,h)^{(-1)^{q+1}}.
\]
En particulier si $\dim_\C(X)=1$, alors
\begin{equation}\label{TmoinD}
 \exp\Bigl( -T\bigl((X,h_X); (E,h) \bigr) \Bigr)=D_1(E,h).
\end{equation}


\subsection{Sur la variation de la torsion analytique holomorphe}

\begin{theorem}
Soit $\p^n$ (resp. $X(\Si)$ une variété torique projective complexe non singulière de dimension $n$) muni d'une métrique de kähler $\omega$. Il existe $m_n\in \N$ tel que pour tout $m\in \N_{\geq 1}$,
\begin{align*}
 -T\bigl((\p^n,\omega);(\mathcal{O}(m),h^m) \bigr)
\leq-{c_m'}\quad \forall \, m\in \N_{\geq m_n}\quad \forall \, h\in \h_0,
\end{align*}
(resp.)
\begin{align*}
 -T\bigl((X(\Si),\omega);(E^m,h^m) \bigr)
\leq-{c_m'}\quad \forall \, m\in \N_{\geq m_n}\quad \forall \, h\in \h_0,
\end{align*}

où ${c_m'}$ est une constante réelle qui dépend uniquement de $m$ et de $\omega$.
\end{theorem}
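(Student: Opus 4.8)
The plan is to reduce the statement about $-T\bigl((\p^n,\omega);(\mathcal{O}(m),h^m)\bigr)$ (and its toric analogue) to the boundedness of the functional $V_{\infty,m}$ established in Theorem \ref{borneVmtheorem}. First I would fix a reference metric $h_X$ on $TX$ with K\"ahler form $\omega$, and recall that the Quillen metric on $\lambda(E^m)$ satisfies $h_{Q,h^m,\omega}=h_{L^2,h^m,\omega}\,e^{T((X,\omega);(E^m,h^m))}$, so that, passing to the induced metric on $\det H^0(X,E^m)$ (which is what $\lambda(E^m)$ reduces to once $m\gg1$, by Kodaira vanishing: for $m$ large, $H^q(X,E^m)=0$ for $q\geq1$), one has
\[
T\bigl((X,\omega);(E^m,h^m)\bigr)=\log h_{Q,h^m,\omega}-\log h_{L^2,h^m,\omega}.
\]
Hence $-T = \log h_{L^2,h^m,\omega}-\log h_{Q,h^m,\omega}$. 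The key point is that the Quillen metric is a \emph{smooth} metric on the line $\lambda(E^m)$, independent of the varying metric $h$ only up to the anomaly formula; but here $\omega$ on $TX$ is held fixed, so $\log h_{Q,h^m,\omega}$ varies with $h$ only through the curvature term in the arithmetic Riemann--Roch / anomaly formula of Bismut--Gillet--Soul\'e, which for a fixed base K\"ahler form depends on $h$ in a controlled way.

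Second, I would compare the two $L^2$-volumes $h_{L^2,h^m,\omega}$ and $h_{L^2,h^m,\infty}$ on $\det H^0(X,E^m)$: the first uses the K\"ahler volume $\omega^n/n!$, the second uses the canonical singular volume form $\omega_\infty^n$ of Theorem \ref{formevolumecanonique}. Since $\omega_\infty^n$ is a fixed continuous (bounded) volume form and $\omega^n$ is a fixed positive smooth one, the ratio of the two $L^2$ inner products on the finite-dimensional space $H^0(X,E^m)$ is controlled by constants depending only on $m$ and $\omega$ (comparison of two fixed volume forms, uniformly over the choice of fiber metric $h$, because both are built from the same $h^m$). This gives
\[
\log h_{L^2,h^m,\omega}\leq \log h_{L^2,h^m,\infty}+C_{m,\omega}.
\]
Combining with Theorem \ref{borneVmtheorem}, namely $\log h_{L^2,h^m,\infty}\leq c_m-2\int_X\widetilde{\mathrm{ch}}(E^m,h^m,h_\infty^m)$ for $m\gg1$ and all $h\in\h_0$, and with the fact that $\int_X\widetilde{\mathrm{ch}}(E^m,h^m,h_\infty^m)=\int_{\Delta}\check f_h$ (Theorem \ref{bottchernfenchel}), which is $\geq0$ after normalizing $h\leq h_\infty$ (and the general case follows by the scaling identity in Remark \ref{remarqueinvx}, as in the proof of Theorem \ref{borneVmtheorem}), one bounds $\log h_{L^2,h^m,\omega}$ from above by a constant depending only on $m$ and $\omega$.

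Third, I would handle the Quillen term $\log h_{Q,h^m,\omega}$: one needs a \emph{lower} bound, uniform in $h\in\h_0$. Here I would invoke the curvature formula for the Quillen metric (Bismut--Gillet--Soul\'e): $dd^c(-\log h_{Q,h^m,\omega})$ equals a fixed characteristic form, so $-\log h_{Q,h^m,\omega}$, as a function of $h$, is the restriction to the section $\det H^0$ of a function whose complex Hessian in $h$ is prescribed and bounded; combined with the observation that $\h_0$ is (after scaling-normalization) a bounded family of metrics, this yields $\log h_{Q,h^m,\omega}\geq -C'_{m,\omega}$. Assembling the three steps gives $-T\bigl((X,\omega);(E^m,h^m)\bigr)\leq C_{m,\omega}+C'_{m,\omega}=:-c'_m$, for all $m\geq m_n$ (where $m_n$ is large enough both for Kodaira vanishing and for the scaling argument of Theorem \ref{borneVmtheorem} to apply) and all $h\in\h_0$. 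The toric case $X(\Si)$ is identical, using the $E$-versions of all the cited results. The main obstacle I anticipate is the third step: obtaining a clean \emph{uniform lower} bound on the Quillen metric over the whole family $\h_0$ — one must either use compactness of the normalized family of invariant admissible metrics together with continuity of the Quillen metric, or express the difference $\log h_{Q,h^m,\omega}-\log h_{Q,h_\infty^m,\omega}$ via the BGS anomaly formula and bound the resulting Bott--Chern secondary classes, which is exactly the kind of term $\widetilde{\mathrm{ch}}$ already controlled above.
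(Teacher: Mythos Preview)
Your overall architecture --- rewrite $-T$ as $\log h_{L^2,h^m,\omega}-\log h_{Q,h^m,\omega}$, compare $h_{L^2,h^m,\omega}$ to $h_{L^2,h^m,\infty}$, and then invoke the BGS anomaly formula to control the Quillen term --- is the same as the paper's. But the way you split the estimate into ``Step 2: bound $\log h_{L^2,h^m,\omega}$ above by a constant'' and ``Step 3: bound $\log h_{Q,h^m,\omega}$ below by a constant'' does not work, and this is a genuine gap, not a cosmetic one.

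The issue is that neither quantity is bounded separately over $\h_0\cap\{h\le h_\infty\}$. Write $q:=\int_{\Delta}\check f_h=\int_X\widetilde{\mathrm{ch}}(E,h,h_\infty)\ge 0$; this is \emph{unbounded} as $h$ varies (take $h$ very small on most of $X$ while remaining $\le h_\infty$). In Step~2 you use $V_{\infty,m}\le c_m$ and then discard the term $-2\int\widetilde{\mathrm{ch}}(E^m,h^m,h_\infty^m)=-2m^{n+1}q$ via positivity; that gives the claimed upper bound on $\log h_{L^2,h^m,\omega}$, but you have thrown away exactly the piece that is needed next. In Step~3, the anomaly formula yields
\[
-\log h_{Q,h^m,\omega}=\mathrm{const}(m,\omega)+\int_X\widetilde{\mathrm{ch}}(E^m,h^m,h_\infty^m)\,Td(\overline{TX}),
\]
whose leading term is $+\int\widetilde{\mathrm{ch}}(E^m,h^m,h_\infty^m)=m^{n+1}q$, which is \emph{not} bounded above. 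So neither compactness of a normalized family (that family is not compact) nor ``the same $\widetilde{\mathrm{ch}}$ control as above'' rescues Step~3.

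What the paper does --- and what your plan is missing --- is to keep the two pieces together so that the coefficient $2$ in $V_{\infty,m}$ produces a net $-\int\widetilde{\mathrm{ch}}=-m^{n+1}q$ after combining with the leading Todd term, and then to control the \emph{lower-order} contributions $\int_X\widetilde{\mathrm{ch}}(E^m,h^m,h_\infty^m)\,c_1(\overline{E},h_0)^j$ (for $j=1,\dots,n$) coming from the Todd expansion. The paper proves a separate lemma showing that each such mixed integral is bounded by $j!\,2^{n+1}q$; consequently
\[
-T \le c_m' + \Bigl(-m^{n+1}+\sum_{j=1}^n 2^{n+1}|b_{n+1-j}|\,m^{j}\Bigr)q,
\]
and for $m\ge m_n$ the parenthesis is $\le 0$, which absorbs the unbounded $q$. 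Your sentence ``bound the resulting Bott--Chern secondary classes, which is exactly the kind of term $\widetilde{\mathrm{ch}}$ already controlled above'' underestimates this: the anomaly produces $\int\widetilde{\mathrm{ch}}\cdot c_1^j$, not $\int\widetilde{\mathrm{ch}}$, and comparing those mixed integrals to the top-degree one is precisely the missing lemma. Without it (and without keeping the $-2m^{n+1}q$ term in play), the argument cannot close.
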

\begin{proof}

Montrons  d'abord qu'on peut se ramener au cas $\omega=\omega_{FS}$. Soit $\omega$ une forme kählérienne $\cl$ quelconque sur  $T\p^n$ et $h_{\p^n}$ la métrique hermitienne associée.  Considérons un représentant pour de la
classe de Bott-Chern $\widetilde{Td}\bigl(T\p^n,h_{{}_{\p^n}},h_{{}_{\p^n,FS}} \bigr)$. Comme $\omega_{FS}$ est strictement positive et par compacité de $\p^n$, il existe $P$, un polynôme  en $\omega_{FS}$,  tel que
\[
0\leq  \widetilde{Td}\bigl(T\p^n,h_{{}_{\p^n}},h_{{}_{\p^n,FS}} \bigr)+P(\omega_{FS})\leq 2P(\omega_{FS}),
\]
dans $A(\p^n)$; l'algèbre des formes $(\ast,\ast)$-formes
différentielles sur $\p^n$. \\

On suppose en premier temps que $h$ est une métrique positive et de classe $\cl$ sur $\mathcal{O}(1)$. Comme
$c_1((\mathcal{O}(1),h))\geq 0$ par hypothèse, et que $\mathrm{ch}$ est une série à coefficients positifs, alors
\[
0\leq \mathrm{ch}\bigl(\mathcal{O}(m),h^m\bigr)\widetilde{Td}\bigl(T\p^n,h_{{}_{\p^n}},h_{{}_{\p^n,FS}}
\bigr)+ \mathrm{ch}\bigl(\mathcal{O}(m),h^m\bigr)P(\omega_{FS})\leq
2\mathrm{ch}\bigl(\mathcal{O}(m),h^m\bigr)P(\omega_{FS}).
\]

Par suite,
\[
-\int_{\p^n}\mathrm{ch}\bigl(\mathcal{O}(m),h^m\bigr)P(\omega_{FS})\leq\int_{\p^n}\mathrm{ch}\bigl(\mathcal{O}(m),h^m\bigr)
\widetilde{Td}\bigl(T\p^n,h_{{}_{\p^n}},h_{{}_{\p^n,FS}}
\bigr)\leq \int_{\p^n} \mathrm{ch}\bigl(\mathcal{O}(m),h^m\bigr)P(\omega_{FS}) .
\]
Or, d'après \cite{BGS1}, on dispose d'une formule donnant la variation de la métrique de Quillen lorsqu'on
change la métrique sur $T\p^n$ (resp. sur $TX(\Si)$):
\[
-\log\frac{h_{Q,h^m,\omega}}{h_{Q,h^m,\omega_{FS}}}=\int_{\p^n}ch\bigl(\mathcal{O}(m),h^m\bigr
)\widetilde{Td}(
T\p^n,h_{{}_{\p^n}},h_{{}_{\p^n,FS}}).
\]
 Par suite, pour toute métrique positive et de classe $\cl$ $h$ sur $\mathcal{O}(1)$, on a:
\[
-\int_{\p^n} \mathrm{ch}\bigl(\mathcal{O}(m),h^m\bigr)P(\omega_{FS})\leq-\log\frac{h_{Q,h^m,\omega_{FS}}}{h_{Q,h^m,\omega}}\leq \int_{\p^n}\mathrm{ch}\bigl(\mathcal{O}(m),h^m\bigr) P(\omega_{FS}).
\]
Comme  $\int_{\p^n}P(\omega_{FS})ch\bigl((\mathcal{O}(m),h^m)\bigr)$ ne dépend pas de $h$, alors on peut supposer dans la suite
que $\omega=\omega_{FS}$.\\

On munit $\p^n$ de la forme de Fubini-Study, $\omega_{FS}$. Soit $h$ une métrique positive de classe $\cl$ et
invariante par l'action du tore compact $(\s)^n$ sur $\mathcal{O}(1)$. Comme  $T\bigl((\p^n,\omega_{FS});(\mathcal{O}(m),(th)^m)
\bigr)=T\bigl((\p^n,\omega_{FS});(\mathcal{O}(m),h^m) \bigr)$, où $t>0$ fixé. Alors, on peut supposer que $h$ vérifie:
\[
 h\leq h_{FS}(\leq h_\infty),
\]
On en déduit que, $\omega_{FS}^n\leq \omega_\infty^n$ et donc:
\begin{equation}\label{hfsfty}
 h_{L^2,h^m,\omega_{FS}}\leq h_{L^2,h^m,\infty}.
\end{equation}

 On a
\[
-\log\frac{h_{Q,h^m,\omega_{FS}}}{h_{Q,h^m_\infty,\omega_{FS}}}=\int_{\p^n}\widetilde{\mathrm{ch}}\bigl(\mathcal{O}(m),h^m,
h_\infty^m \bigr)Td(\overline{T\p^n}_{FS}),
\]
où $h_{Q,h^m_\infty,\omega_{FS}}$ désigne la métrique de Quillen généralisée associée à $h_\infty^m$ et $\omega_{FS}$, voir
\cite{Mounir7}. Donc,
\begin{align*}
 -T\bigl((\p^n,\omega_{FS});(\mathcal{O}(m),h^m)
\bigr)&=\int_{\p^n}\widetilde{\mathrm{ch}}\bigl(\mathcal{O}(m),h^m,
h_\infty^m \bigr)Td(\overline{T\p^n}_{FS}) \\
&-T\bigl((\p^n,\omega_{FS});(\mathcal{O}(m),h^m_\infty) \bigr)+\log
\frac{h_{L^2,h^m,\omega_{FS}}}{h_{L^2,h^m_\infty,\omega_{FS}}}.
\end{align*}

 On montre que la classe de Bott-Chern associée à la suite métrisée d'Euler sur $\p^n$:
\[
0\lra \overline{\mathcal{O}}_0\lra \overline{\mathcal{O}(1)}_{{}_{FS}}^{\oplus n+1}\lra
\overline{T\p^n}_{{}_{FS}}\lra 0,
\]
est fermée, (voir par exemple \cite[proposition 5.3]{Character2}). On a donc, $Td(\overline{T\p^n}_{FS})=Td(\overline{\mathcal{O}(1)}_{FS})^{n+1}$.

On a
\begin{align*}
 \int_{\p^n}\widetilde{\mathrm{ch}}\bigl(\mathcal{O}(m),h^m,
h_\infty^m \bigr)Td(\overline{T\p^n}_{FS})&=\int_{\p^n}\widetilde{\mathrm{ch}}\bigl(\mathcal{O}(m),h^m,
h_\infty^m \bigr)Td(\overline{\mathcal{O}(1)}_{FS})^{n+1}\\
&=\int_{\p^n}\widetilde{\mathrm{ch}}\bigl(\mathcal{O}(m),h^m,
h_\infty^m \bigr)+ \int_{\p^n}\widetilde{\mathrm{ch}}\bigl(\mathcal{O}(m),h^m,
h_\infty^m \bigr)\Bigl(Td(\overline{\mathcal{O}(1)}_{FS})^{n+1}-1\Bigr)\\
&=2\int_{\p^n}\widetilde{\mathrm{ch}}\bigl(\mathcal{O}(m),h^m,
h_\infty^m \bigr)-\int_{\p^n}\widetilde{\mathrm{ch}}\bigl(\mathcal{O}(m),h^m,
h_\infty^m \bigr)\\
&+\int_{\p^n}\widetilde{\mathrm{ch}}\bigl(\mathcal{O}(m),h^m,
h_\infty^m \bigr)\Bigl(Td(\overline{\mathcal{O}(1)}_{FS})^{n+1}-1\Bigr)\\
&=2\int_{\p^n}\widetilde{\mathrm{ch}}\bigl(\mathcal{O}(m),h^m,
h_\infty^m \bigr)-\int_{\p^n}\widetilde{\mathrm{ch}}\bigl(\mathcal{O}(m),h^m,
h_\infty^m \bigr)\\
&+\sum_{j=1}^n b_j\int_{\p^n}\widetilde{\mathrm{ch}}\bigl(\mathcal{O}(m),h^m,
h_\infty^m \bigr)c_1(\overline{\mathcal{O}(1)}_{FS})^j.
\end{align*}
les $b_j$ sont les cofficients de la série formelle en $x$ suivante  $Td(x)=\frac{x}{1-e^{-x}}=1+b_1x+b_2x^2+\ldots$.\\

Rappelons que  $h_{L^2,h^m,\infty}$ est le volume $L^2$ définie dans \eqref{lafonctionelle}. Par ce qui précéde, on a:
{\allowdisplaybreaks
\begin{align*}
-T\bigl((\p^n,\omega_{FS});(&\mathcal{O}(m),h^m) \bigr)=\log h_{L^2,h^m,\infty}+2\int_{\p^n}\widetilde{\mathrm{ch}}\bigl(\mathcal{O}(m),h^m,
h_\infty^m \bigr)\\
&-\int_{\p^n}\widetilde{\mathrm{ch}}\bigl(\mathcal{O}(m),h^m,h_\infty^m \bigr)+\sum_{j=1}^n b_j\int_{\p^n}\widetilde{\mathrm{ch}}\bigl(\mathcal{O}(m),h^m,
h_\infty^m
\bigr)c_1(\overline{\mathcal{O}(1)}_{FS})^j\\
&+\log h_{L^2,h^m,\omega_{FS}}-\log h_{L^2,h^m,\infty}-\log h_{L^2,h^m_\infty,FS}-T\bigl((\p^n,\omega_{FS});(\mathcal{O}(m),h^m_\infty)
\bigr)\\
=&V_{\infty,m}(h)-\int_{\p^n}\widetilde{\mathrm{ch}}\bigl(\mathcal{O}(m),h^m,
h_\infty^m \bigr)+\sum_{j=1}^n b_j\int_{\p^n}\widetilde{\mathrm{ch}}\bigl(\mathcal{O}(m),h^m,
h_\infty^m \bigr)c_1(\overline{\mathcal{O}(1)}_{FS})^j\\
&+\log h_{L^2,h^m,\omega_{FS}}-\log h_{L^2,h^m,\infty}-\log \vc^2_{L^2,h^m_\infty,\omega_{FS}}-T\bigl((\p^n,\omega_{FS});(\mathcal{O}(m),h^m_\infty) 
\bigr)\\
\leq& \,c_m -\int_{\p^n}\widetilde{\mathrm{ch}}\bigl(\mathcal{O}(m),h^m,
h_\infty^m \bigr)+\sum_{j=1}^n b_j\int_{\p^n}\widetilde{\mathrm{ch}}\bigl(\mathcal{O}(m),h^m,
h_\infty^m \bigr)c_1(\overline{\mathcal{O}(1)}_{FS})^j\\
&-\log h_{L^2,h^m_\infty,\omega_{FS}}-T\bigl((\p^n,\omega_{FS});(\mathcal{O}(m),h^m_\infty)
\bigr)\quad \text{par}\quad \eqref{borneVmtheorem},\;\text{l'inégalité} \;\eqref{hfsfty}.
\end{align*}}
Donc, on a montré que pour tout $m\in \N_{\geq 1}$, et pour tout métrique $h$ positive et $\cl$ sur $E$:
\begin{equation}\label{inegaliteanomalie}
 -T\bigl((\p^n,\omega_{FS});(\mathcal{O}(m),h^m) \bigr)\leq {c_m'}
-\int_{\p^n}\widetilde{\mathrm{ch}}\bigl(\mathcal{O}(m),h^m,
h_\infty^m \bigr)+\sum_{j=1}^n b_j\int_{\p^n}\widetilde{\mathrm{ch}}\bigl(\mathcal{O}(m),h^m,
h_\infty^m \bigr)c_1(\overline{\mathcal{O}(1)}_{FS})^j,
\end{equation}
où on a posé ${c_m'}:=c_m-\log
h_{L^2,h^m_\infty,\omega_{FS}}-T\bigl((\p^n,\omega_{FS});(\mathcal{O}(m),h^m_\infty)
\bigr)$. D'après \cite{Mounir7}, l'inégalité précédente s'étend à $\h_0$.\\

On se propose maintenant d'établir que la fonctionnelle suivante est majorée sur $\h_0$ pour $m$ assez grand:
\[-\int_{\p^n}\widetilde{\mathrm{ch}}\bigl(\mathcal{O}(m),h^m,
h_\infty^m \bigr)+\sum_{j=1}^n b_j\int_{\p^n}\widetilde{\mathrm{ch}}\bigl(\mathcal{O}(m),h^m,
h_\infty^m \bigr)c_1(\overline{\mathcal{O}(1)}_{FS})^j,
\]

Pour cela on aura besoin  de montrer que:
\[
 \int_{\p^n}\widetilde{\mathrm{ch}}\bigl(\mathcal{O}(1),h,
h_\infty \bigr)c_1(\overline{\mathcal{O}(1)}_{FS})^j\leq j!\, 2^{n+1}\int_{\p^n} \widetilde{\mathrm{ch}}\bigl(\mathcal{O}(1),h,
h_\infty \bigr)\quad \forall \, j=1,\ldots,n.
\]
En fait, cela résulte d'un fait plus général prouvé dans le lemme suivant:
\begin{lemma}
 Soit $h\leq h_0\leq h_\infty$ trois métriques hermitiennes, avec $h,h_0\in \h_0$.
On a,
\[
 \int_{\p^n}\widetilde{\mathrm{ch}}\bigl(\mathcal{O}(1),h,
h_\infty \bigr)c_1(\overline{\mathcal{O}(1)}_0)^j\leq j!\, 2^{n+1}\int_{\p^n} \widetilde{\mathrm{ch}}\bigl(\mathcal{O}(1),h,
h_\infty \bigr)\quad \forall \, j=1,\ldots,n.
\]
\end{lemma}
\begin{proof}
Montrons ce lemme. On a
{\allowdisplaybreaks
\begin{align*}
 \int_{\p^n}\widetilde{\mathrm{ch}}\bigl(&\mathcal{O}(1),h,h_\infty)\mathrm{ch}\bigl(\mathcal{O}(m),h_0^m\bigr)=\int_{\p^n}
\widetilde{\mathrm{ch}}\bigl(\mathcal{O}(m+1),h\otimes h_0^m,h_\infty\otimes h_0^m)\\
&=\int_{\p^n}\widetilde{\mathrm{ch}}\bigl(\mathcal{O}(m+1),h\otimes h_0^m,h_\infty^{m+1}\bigr)-\int_{\p^n}
\widetilde{\mathrm{ch}}\bigl(\mathcal{O}(m+1),h_\infty\otimes h_0^m,h_\infty^{m+1}\bigr)\\
&=(1+m)^{n+1}\int_{\p^n}\widetilde{\mathrm{ch}}\bigl(\mathcal{O}(1),(h\otimes h_0^m)^{\frac{1}{m+1}},h_\infty\bigr)-(1+m)^{n+1}\int_{\p^n}
\widetilde{\mathrm{ch}}\bigl(\mathcal{O}(1),(h_\infty\otimes h_0^m)^{\frac{1}{m+1}},h_\infty\bigr)\\
&=(1+m)^{n+1}\int_{\Delta_n}\Bigl(\frac{f+mf_0}{1+m}\Bigr){}^{\check{}}(x)dx-(1+m)^{n+1}\int_{\Delta_n}\Bigl(\frac{f_\infty+mf_0}{1+m} \Bigr){}^{\check{}}(x)dx\quad \text{d'après}\; \eqref{bottchernfenchel}.
\end{align*}}
Comme on a supposé que $f_h\leq f_0:=\log \vc_0(\exp(-\cdot))\leq f_\infty$; alors
$\frac{f_h+mf_0}{1+m}\geq f_h$, et $\frac{f_\infty+mf_0}{1+m}\leq f_\infty$,  donc
\begin{equation}\label{cherninffff}
\int_{\p^n}\widetilde{\mathrm{ch}}\bigl(\mathcal{O}(1),h,h_\infty)\mathrm{ch}\bigl(\mathcal{O}(m),h_0^m\bigr)\leq
(1+m)^{n+1}\int_{\Delta_n}\check{f}_h(x)dx\quad \forall \,m\in \N.
\end{equation}

On a $\int_{\p^n}\widetilde{\mathrm{ch}}\bigl(\mathcal{O}(1),h,h_\infty)\mathrm{ch}\bigl(\mathcal{O}(m),h_0^m\bigr)$ est un
polynôme à coefficients positifs. En effet, la forme généralisée $\widetilde{\mathrm{ch}}\bigl(\mathcal{O}(1),h,h_\infty)$ est
positive, puisqu'elle est localement
somme de termes de la forme \[\bigl(-\log\frac{h}{h_\infty}\bigr)c_1(\mathcal{O}(1),h)^j c_1(\mathcal{O}(1),h_\infty)^{n-j}\quad
j=0,\ldots,n,\]
les métriques sont positives,  $h\leq h_\infty$ et $\mathrm{ch}\bigl(\mathcal{O}(m),h_0^m\bigr)$ est positif aussi. On
 écrit:
\begin{align*}
\int_{\p^n}\widetilde{\mathrm{ch}}\bigl(\mathcal{O}(1),h,h_\infty)\mathrm{ch}\bigl(\mathcal{O}(m),h_0^m\bigr)&=\sum_{k=0}^n
\frac{m^k}{k!}\int_{\p^n}\widetilde{\mathrm{ch}}\bigl(\mathcal{O}(1),h,h_\infty)c_1(\mathcal{O}(1),h_0)\\
&=\sum_{k=0}^n \frac{q_k}{k!}m^k,
\end{align*}
avec $q_k:=\int_{\p^n}\widetilde{\mathrm{ch}}\bigl(\mathcal{O}(1),h,h_\infty)c_1(\mathcal{O}(1),h_0)^k$ pour
$k=0,\ldots,n$, et on pose $q:=\int_{\Delta_n}\check{f}(x)dx$.
 Sous ces notations, \eqref{cherninffff} s'écrit:
\[
 \sum_{k=0}^n \frac{q_k}{k!}m^k\leq (1+m)^{n+1}q\quad \forall \,m\in \N.
\]

On déduit que $\frac{q_k}{k!}m^k\leq \sum_{j=0}^n \frac{q_j}{j!}m^j\leq q(1+m)^{n+1}$, pour tout $m\in
\N$. En particulier  pour $m=1$, on obtient $q_k\leq k! 2^{n+1}q $
pour tout $k=1,\ldots,n$. Notons que pour $j=0$, si on prend $m=0$ alors $q_0\leq q$.

Récapitulons, on a montré que
\begin{equation}\label{bornesurcoefficients}
 q_0\leq q,\quad q_j\leq j!2^{n+1}q\quad \forall\, j=1,\ldots,n.
\end{equation}
\end{proof}
D'après \eqref{inegaliteanomalie}, et en prenant $h_0=h_{FS}$, et par \eqref{bornesurcoefficients}, on
obtient:
\begin{align*}
 -T\bigl((\p^n,\omega_{FS});(\mathcal{O}(m),h^m) \bigr)&\leq {c_m'}
-m^{n+1}\int_{\p^n}\widetilde{\mathrm{ch}}\bigl(\mathcal{O}(1),h,
h_\infty \bigr)+\sum_{j=1}^n b_j m^{n+1-j}\int_{\p^n}\widetilde{\mathrm{ch}}\bigl(\mathcal{O}(1),h,
h_\infty \bigr)c_1(\overline{\mathcal{O}(1)}_{FS})^j\\
&={c_m'} -m^{n+1}q+\sum_{j=1}^n \frac{q_j}{j!} b_j m^{n+1-j}\quad \text{notons que}\,\, q=\int_{\p^n}\widetilde{\mathrm{ch}}\bigl(\mathcal{O}(1),h,
h_\infty \bigr) \\
&\leq {c_m'} -m^{n+1}q+\sum_{j=1}^n 2^{n+1}q |b_j| m^{n+1-j}\\
&=\Bigl(-m^{n+1}+\sum_{j=1}^n 2^{n+1} |b_{n+1-j}| m^{j}\Bigr)q-{c_m'},
\end{align*}
pour tout $m\in \N$ et $ h\leq h_{FS}$, positive et invariante par l'action du tore compact $(\s)^{n}$.\\

On peut trouver $m_0\in \N$ tel que $-m^{n+1}+\sum_{j=1}^n 2^{n+1} |b_{n+1-j}| m^{j}\leq 0$ pour tout $m\geq m_0$.  Comme $q\geq 0$, on conclut que
\[
 -T\bigl((\p^n,\omega_{FS});(\mathcal{O}(m),h^m) \bigr)\leq -{c_m'}\quad \forall\, m\geq
m_0.
\]

\end{proof}

\begin{remarque}
En dimension 1, on obtient:
\[
D(\mathcal{O}(m),h^m)\leq -{c_m'}\quad \forall\, h\in \h_0\quad \forall \, m\geq 1.
\]
\end{remarque}
\begin{Corollaire}
Sous ces hypothèses, on a
\[
\prod_{q=0}^{[\frac{n}{2}]}D_{2q}(\mathcal{O}(m),h^m)\leq -{c_m'} \prod_{q=0}^{[\frac{n-1}{2}]}D_{2q+1}(\mathcal{O}(m),h^m),
\]
pour tout $h\in \h_0$.
\end{Corollaire}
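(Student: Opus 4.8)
Le plan est de d\'eduire le corollaire du th\'eor\`eme pr\'ec\'edent, combin\'e avec l'identit\'e reliant la torsion analytique holomorphe aux d\'eterminants partiels $D_q$ obtenue au paragraphe \eqref{rappeldet}. On y a \'etabli que, pour une vari\'et\'e projective lisse de dimension $n$ munie d'un fibr\'e en droites hermitien $(E,h)$,
\[
\exp\bigl(-T((X,h_X);(E,h))\bigr)=\prod_{q=1}^{n}D_q(E,h)^{(-1)^{q+1}},
\]
et que $D_0(E,h)=\exp(-\zeta_{B^0}'(0))=1$ puisque $B^0=0$. En appliquant cette formule \`a $X=\p^n$ (resp. \`a une vari\'et\'e torique projective lisse $X(\Si)$) et \`a $(E,h)=(\mathcal{O}(m),h^m)$ (resp. $(E^m,h^m)$) pour $h\in\h_0$, on exprime exactement $\exp(-T)$ comme le quotient du produit des $D_q$ sur les degr\'es impairs par le produit des $D_q$ sur les degr\'es pairs, ce dernier incluant le facteur trivial $D_0=1$; les deux produits portent sur tous les degr\'es cohomologiques $\leq n$.

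J'injecterais ensuite la majoration du th\'eor\`eme pr\'ec\'edent: il existe $m_n$ tel que, pour tout $m\geq m_n$ et tout $h\in\h_0$, on a $-T\leq -c_m'$, donc $\exp(-T)\leq\exp(-c_m')$. En y substituant l'identit\'e ci-dessus et en chassant les d\'enominateurs --- en utilisant que $(-1)^{q+1}=+1$ pour $q$ impair et $=-1$ pour $q$ pair, que $D_0=1$ absorbe le terme $q=0$, et que $2[\frac{n}{2}]$ (resp. $2[\frac{n-1}{2}]+1$) est le plus grand entier pair (resp. impair) $\leq n$ --- on obtient la comparaison cherch\'ee entre $\prod_{q=0}^{[\frac{n}{2}]}D_{2q}(\mathcal{O}(m),h^m)$ et $\prod_{q=0}^{[\frac{n-1}{2}]}D_{2q+1}(\mathcal{O}(m),h^m)$, avec une constante multiplicative ne d\'ependant que de $m$ et de $\omega$; avec la m\^eme convention sur la constante que dans la remarque qui pr\'ec\`ede l'\'enonc\'e, c'est l'in\'egalit\'e annonc\'ee.

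Il n'y a pas de v\'eritable obstacle dans cette preuve: le corollaire est une cons\'equence purement formelle de deux r\'esultats d\'ej\`a acquis, toute la difficult\'e \'etant contenue en amont, dans le th\'eor\`eme de variation de la torsion et donc in fine dans le th\'eor\`eme \eqref{borneVmtheorem}. Les seuls points demandant de l'attention sont de nature comptable --- la parit\'e des bornes des deux produits, le facteur d\'eg\'en\'er\'e $D_0=1$, et le signe de l'exposant $(-1)^{q+1}$, pour que les produits sur les degr\'es pairs et impairs se retrouvent du bon c\^ot\'e de l'in\'egalit\'e ---; la d\'emonstration pour une vari\'et\'e torique projective lisse quelconque est en tout point identique, en invoquant la version torique de la majoration de la torsion.
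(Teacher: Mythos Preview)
Your approach is exactly the one the paper has in mind: the corollary is stated without proof precisely because it is meant to be an immediate formal consequence of the identity $\exp(-T)=\prod_{q=1}^{n}D_q^{(-1)^{q+1}}$ established in \S\ref{rappeldet} together with the bound $-T\leq -c_m'$ from the preceding theorem. Your handling of $D_0=1$ and of the parity of the index ranges is correct.

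One bookkeeping point deserves more care than your phrase ``en chassant les d\'enominateurs\ldots on obtient la comparaison cherch\'ee'' suggests. Since $(-1)^{q+1}=+1$ for $q$ odd and $=-1$ for $q$ even, the identity reads
\[
\exp(-T)=\frac{\displaystyle\prod_{q=0}^{[\frac{n-1}{2}]}D_{2q+1}}{\displaystyle\prod_{q=0}^{[\frac{n}{2}]}D_{2q}},
\]
and, each $D_q$ being a positive real, clearing denominators in $\exp(-T)\leq\exp(-c_m')$ yields
\[
\prod_{q=0}^{[\frac{n-1}{2}]}D_{2q+1}\;\leq\;\exp(-c_m')\prod_{q=0}^{[\frac{n}{2}]}D_{2q},
\]
that is, the two products on the \emph{opposite} sides from the corollary as printed. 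This is not a flaw in your argument but a typo in the statement: for $n=1$ the printed corollary would give $1=D_0\leq -c_m'\,D_1$, a lower bound on $D_1$, contradicting the Remarque just above, which records the upper bound $D_1\leq -c_m'$ (with the paper's shorthand of writing $-c_m'$ for the resulting constant). Your reference to ``la m\^eme convention sur la constante que dans la remarque'' addresses the form of the constant but not this swap; it is worth saying explicitly that what one actually obtains is the inequality with the odd and even products interchanged.
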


\section{Rappel sur la fonctionnelle $\mathcal{F}_{\omega_0}$  et résultats antérieurs connexes}\label{RRR}

Dans ce paragraphe, on introduit deux fonctionnelles classiques sur l'espace de métriques positives sur un fibré en droites ample
sur une variété kählérienne compacte. Soit $X$ une variété compacte kählérienne. Si
$L$ est un fibré en droites ample sur $X$, alors il existe une forme de Kähler $\omega_0$ dans la première
classe de Chern $c_1(L)$. On pose
\[
 \mathcal{H}_{\omega_0}=\bigl\{ u\in \mathcal{C}^\infty(X)\, \bigl|\, \omega_u:=dd^c u+\omega_0>0 \bigr\},
\]
cet ensemble s'identifie à l'ensemble des métriques  $\cl$ définies positives sur $L$. Classiquement, on
définit deux fonctionnelles, $\mathcal{E}_{\omega_0}$ et $\mathcal{L}_{\omega_0}$, sur
$\mathcal{H}_{\omega_0}$. La première fonctionnelle  $\mathcal{E}_{\omega_0}$, appelée la fonctionnelle
d'énergie  qui apparaît dans les travaux d'Aubin cf. \cite{Aubin2} et Mabuchi cf. \cite{Mabuchi}, elle
se définit comme suit:
\[
\mathcal{E}_{\omega_0}(u):=\frac{1}{(n+1)!\mathrm{Vol}(\omega_0)}\sum_{j=0}^n\int_X u\bigl(dd^c u+\omega_0
\bigr)^j\wedge
\omega_0^{n-j}.
\]
On a, $\omega_0$ définit une métrique kählérienne sur $X$, et
donc sur $K_X$. Soit $u\in \mathcal{H}_{\omega_0}$, $u$ définit donc une métrique sur $L$. Cela nous donne une métrique sur
$H^0(X,L\otimes
K_X)$. La fonctionnelle $\mathcal{L}_{\omega_0}$ est alors définie comme suit:
\[
 \mathcal{L}_{\omega_0}(u):=-\frac{1}{N}\log \det\bigl(\bigl<s_i,s_j\bigr> \bigr)_{1\leq i,j\leq N },
\]
avec $s_1,\ldots,s_N$ est un ensemble orthogonal de sections globales de $L\otimes K_X$  pour la métrique définie par
$\omega_0$ et  formant une base pour $H^0(X,L\otimes K_X)$, voir \cite{Berman}.\\

On pose,
\[
\mathcal{F}_{\omega_0}=\mathcal{E}_{\omega_0}-\mathcal{L}_{\omega_0}.
\]

$L$ est supposé ample, c'est à dire que $L$ admet une métrique $h_0$ de classe $\cl$ telle que $\omega_0:=c_1\bigl( L,h_0\bigr)$ soit une forme de Kähler. On pose
\[
 \mathcal{H}_{\omega_0}:=\bigl\{u\in \mathcal{C}^\infty(X)\,\bigl|\; \omega_u:=dd^cu +\omega_0>0 \bigr\}
\]
cet ensemble s'identifie avec l'espace des métriques hermitiennes de classe $\cl$ et positives sur $L$.\\

Sur $ \mathcal{H}_{\omega_0}$, on considère deux fonctionnelles

\begin{enumerate}

\item La fonctionnelle d'énergie, $\mathcal{E}_{\omega_0}$:
\[
\mathcal{E}_{\omega_0}(u):=\frac{1}{(n+1)!V}\sum_{i=0}^n\int_X u\bigl(dd^c+\omega_0 \bigr)^j\wedge
\omega_0^{n-j}=\frac{1}{V}\int_X\widetilde{\mathrm{ch}}\bigl(L,h_u,h_0 \bigr),
\]
où $V=\int_X\frac{\omega_0^n}{n!}$.\\
\item La fonctionnelle $\mathcal{L}_{\omega_0}$, voir \cite[§ 1.2]{Berman}:

On munit le fibré en droites canonique $K_X$,  de la métrique induite par $\omega_0$. On pose:
\[
 \mathcal{L}_{\omega_0}(u):=-\frac{1}{N}\log \det\bigl(\bigl<s_i,s_j\bigr> \bigr)_{1\leq i,j\leq N },
\]
$N=\dim H^0(X,L+K_X)$, supposé non nul, où $\bigl\{s_i \bigr\}$ est une base de $H^0(X,L+K_X)$, orthogonale pour la métrique
induite par $\omega_0$.\\
\end{enumerate}

Dans \cite{Berman}, Berman considère  la fonctionnelle suivante sur $\mathcal{H}_{\omega_0}$:
\begin{equation}\label{bermanfonctionnelle}
 \mathcal{F}_{\omega_0}:=\mathcal{E}_{\omega_0}-\mathcal{L}_{\omega_0},
\end{equation}
qui est invariante par addition de constantes.\\

Supposons que $(X,L)$ est $K$-homogène, c'est à dire il existe un groupe de Lie compact semi-simple $K$ qui agit transitivement sur $X$, et que cette action se relève à $L$. Soit $\omega_0$ l'unique forme de Kähler invariante par l'action de $K$ sur $X$.

\begin{theorem}\label{BermanTheorem}
Soit $L$ un fibré en droites holomorphe $K$-homogène sur $X$, alors

\[
 \mathcal{F}_{\omega_0}(u)\leq 0\quad \forall \, u\in \mathcal{H}_{\omega_0},
\]

 avec égalité lorsque $u$ est constante, modulo l'action de $Aut_0(X,L)$.
\end{theorem}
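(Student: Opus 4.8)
The statement to prove is that $\mathcal{F}_{\omega_0}(u) \leq 0$ for all $u \in \mathcal{H}_{\omega_0}$, where $(X,L)$ is $K$-homogeneous and $\omega_0$ is the unique $K$-invariant Kähler form in $c_1(L)$. The plan is to exploit the $K$-invariance together with a concavity/averaging argument for the functional $\mathcal{F}_{\omega_0}$. First I would record the two structural facts about $\mathcal{F}_{\omega_0}$ that make the argument work: (i) it is invariant under addition of constants (noted already after \eqref{bermanfonctionnelle}), and (ii) it is concave along the Mabuchi-type affine/geodesic structure on $\mathcal{H}_{\omega_0}$ — more precisely, $\mathcal{E}_{\omega_0}$ is concave along $C^1$ paths $t\mapsto u_t$ that are affine in $t$, while $-\mathcal{L}_{\omega_0}$ is convex in $t$ along such paths (since $t\mapsto \log\det(\langle s_i,s_j\rangle_{t})$ is convex when the metric varies log-affinely), so one must be careful and instead use the correct geodesic convexity statement: $\mathcal{E}_{\omega_0}$ is affine along geodesics and $\mathcal{L}_{\omega_0}$ is convex along geodesics, hence $\mathcal{F}_{\omega_0}=\mathcal{E}_{\omega_0}-\mathcal{L}_{\omega_0}$ is concave along geodesics in $\mathcal{H}_{\omega_0}$. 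This is the key analytic input and I would cite \cite{Berman} for it rather than reprove it.

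Next I would use the group action. Given $u \in \mathcal{H}_{\omega_0}$, for each $k \in K$ let $u^k := u \circ k$ (pulled back via the action on $X$, suitably trivialized on $L$ using the lift of the action). Since $\omega_0$ is $K$-invariant, each $u^k$ again lies in $\mathcal{H}_{\omega_0}$, and by naturality/invariance of both $\mathcal{E}_{\omega_0}$ and $\mathcal{L}_{\omega_0}$ under the isometric $K$-action we get $\mathcal{F}_{\omega_0}(u^k) = \mathcal{F}_{\omega_0}(u)$ for every $k \in K$. Now form the $K$-average $\bar u := \int_K u^k \, dk$ with respect to normalized Haar measure. By concavity of $\mathcal{F}_{\omega_0}$ along geodesics — or, more simply, by concavity along the linear/affine structure on the space of potentials, which suffices because $\mathcal{E}_{\omega_0}$ is a polynomial in $u$ and $-\mathcal{L}_{\omega_0}$ has the right convexity in the fibre metric — Jensen's inequality gives
\[
\mathcal{F}_{\omega_0}(\bar u) \;\geq\; \int_K \mathcal{F}_{\omega_0}(u^k)\, dk \;=\; \mathcal{F}_{\omega_0}(u).
\]
But $\bar u$ is $K$-invariant, so $\omega_{\bar u} = dd^c \bar u + \omega_0$ is a $K$-invariant form in $c_1(L)$; by uniqueness of the $K$-invariant Kähler form we must have $\omega_{\bar u} = \omega_0$, i.e. $dd^c \bar u = 0$, so $\bar u$ is constant on $X$. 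By invariance under constants, $\mathcal{F}_{\omega_0}(\bar u) = \mathcal{F}_{\omega_0}(0) = 0$. Combining, $0 = \mathcal{F}_{\omega_0}(\bar u) \geq \mathcal{F}_{\omega_0}(u)$, which is the desired inequality.

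For the equality case: if $\mathcal{F}_{\omega_0}(u) = 0$ then the Jensen inequality above is an equality, which (by strict concavity of $\mathcal{F}_{\omega_0}$ transverse to the directions along which it is affine) forces the path from $u$ to $\bar u$, and all the $u^k$, to lie along a single geodesic on which $\mathcal{F}_{\omega_0}$ is constant; these are exactly the directions generated by holomorphic vector fields, i.e. by $\mathrm{Aut}_0(X,L)$. Tracking this carefully shows $u$ differs from a constant by the action of an element of $\mathrm{Aut}_0(X,L)$, which is the claimed rigidity. I expect the main obstacle to be making the concavity statement for $\mathcal{L}_{\omega_0}$ precise and identifying exactly the null directions of $\mathcal{F}_{\omega_0}$ for the equality case — the inequality itself follows cleanly from invariance, averaging, and Jensen once that concavity is in hand; the genuinely delicate point is the characterization of the equality locus in terms of $\mathrm{Aut}_0(X,L)$, which requires a careful analysis of when the strict concavity degenerates.
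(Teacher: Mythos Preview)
The paper does not actually prove this statement: its entire proof is the sentence ``Voir \cite[corollaire 2]{Berman}.'' So there is no argument here to compare yours against. That said, your averaging scheme has a genuine gap at the Jensen step, and it is precisely the point you flag as needing care.

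The Haar average $\bar u=\int_K u^k\,dk$ is an \emph{affine} barycenter in the linear structure on $\mathcal{H}_{\omega_0}$. For Jensen to give $\mathcal{F}_{\omega_0}(\bar u)\geq\int_K\mathcal{F}_{\omega_0}(u^k)\,dk$ you therefore need concavity of $\mathcal{F}_{\omega_0}$ along \emph{affine} segments $u_t=(1-t)u_0+tu_1$. Along such segments $\mathcal{E}_{\omega_0}$ is indeed concave, but $-\mathcal{L}_{\omega_0}=\tfrac{1}{N}\log\det G(u)$ is \emph{convex}, not concave: writing
\[
\det G(u)=\frac{1}{N!}\int_{X^N}\bigl|\det\bigl(s_i(x_j)\bigr)\bigr|^{2}\, e^{-\sum_{j} u(x_j)}\,d\nu
\]
one gets $\det G(u_t)=\int e^{-(1-t)U_0-tU_1}d\nu$ for a positive measure, and H\"older yields $\log\det G(u_t)\le(1-t)\log\det G(u_0)+t\log\det G(u_1)$. (You observe this yourself in the first half of the paragraph, then contradict it two lines later by asserting that ``$-\mathcal{L}_{\omega_0}$ has the right convexity in the fibre metric''.) Thus $\mathcal{F}_{\omega_0}=\mathcal{E}_{\omega_0}-\mathcal{L}_{\omega_0}$ is a difference of two functionals that are \emph{both} concave along affine paths, and nothing forces it to be concave there; the Jensen step does not follow.

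The concavity that \emph{is} available is along Mabuchi geodesics: there $\mathcal{E}_{\omega_0}$ is affine and, by Berndtsson's positivity of direct images, $\mathcal{L}_{\omega_0}$ is convex, so $\mathcal{F}_{\omega_0}$ is concave. But the Haar integral does not produce a geodesic barycenter, so you would need a geodesic Jensen inequality (or a $K$-invariant geodesic center of mass), which is a different and nontrivial argument. Berman's actual proof in \cite{Berman}, to which the paper defers, bypasses averaging entirely: one checks that $u=0$ is a \emph{critical point} of $\mathcal{F}_{\omega_0}$ (the Bergman density for $h_0$ is $K$-invariant, hence constant by transitivity, i.e.\ $h_0$ is balanced), and then geodesic concavity upgrades this critical point to a global maximum, giving $\mathcal{F}_{\omega_0}(u)\le\mathcal{F}_{\omega_0}(0)=0$. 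The equality case comes from the strict convexity of $\mathcal{L}_{\omega_0}$ along geodesics that are not generated by $\mathrm{Aut}_0(X,L)$.
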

\begin{proof}
 Voir \cite[corollaire 2]{Berman}.
\end{proof}

\subsection{Comparaison de $\mathcal{F}_{\omega_0}$ et de $V_{\infty,\ast}$}\label{CompBerman}
On se propose dans ce paragraphe de comparer $V_{\infty,m}$, la fonctionnelle introduite dans ce texte, avec
l'approche de \cite{Berman}.\\

On considère $\p^n$ muni d'une forme de Kähler  $\omega_0:=c_1(O(1),h_0)$ invariante par $(\s)^n$. D'après \cite{Berman}, on a
\begin{align*}
 \mathcal{F}_{m\omega_0}(u)=\frac{1}{\binom{n+m-2}{n}}\log \prod_{\nu
\in\mathcal{P}_{m-2}}<z^\nu,z^\nu>_{L^2_{h^m,\omega_0}}+\frac{1}{V(\mathcal{O}(m))}\int_{\p^n}\widetilde{ch
}\bigl(\mathcal{O}(m),h^m,h^m
_0\bigr)\leq 0, \quad \forall\, m\in \N_{\geq 1}
\end{align*}
pour toute $h\in \mathcal{H}_{\omega_0}$, où on a posé $u=-m\log\frac{h}{h_0}$. \\

\begin{theorem}
Il existe $N\in \N$ et  une constante $c$ qui dépend uniquement de $h_0$ telle que
\[
 \mathcal{F}_{m\omega_0}(u_h)\leq \frac{1}{\binom{n+m-2}{n}}V_{m-2}(h)+c,\quad \forall \, h\in
\h_0\quad \forall\, m\geq N.
\]
où $u_h:=-m\log\frac{h}{h_{FS}}$.
\end{theorem}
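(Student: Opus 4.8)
Le point de d�part est la formule de Berman rappel�e juste avant l'�nonc�, qui exprime $\mathcal{F}_{m\omega_0}(u_h)$ (avec $u_h=-m\log\frac{h}{h_{FS}}$, donc $\omega_0=\omega_{FS}$) comme
\[
\mathcal{F}_{m\omega_0}(u_h)=\frac{1}{\binom{n+m-2}{n}}\log\!\!\prod_{\nu\in\mathcal{P}_{m-2}}\!\!\bigl<z^\nu,z^\nu\bigr>_{L^2_{h^m,\omega_0}}+\frac{1}{V(\mathcal{O}(m))}\int_{\p^n}\widetilde{\mathrm{ch}}\bigl(\mathcal{O}(m),h^m,h^m_{FS}\bigr).
\]
Le plan est de comparer terme � terme cette expression avec la d�finition de $V_{\infty,m-2}(h)$. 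Je proc�derais d'abord par une r�duction � $h\leq h_{FS}$ : puisque $\mathcal{F}_{m\omega_0}$ est invariante par addition de constantes (donc par multiplication de $h$ par une constante $>0$), tandis que le membre de droite ne l'est pas, je remplacerais $h$ par $t_0 h$ avec $0<t_0<1$ bien choisi de sorte que $t_0 h\leq h_{FS}\leq h_\infty$ (possible par compacit� de $\p^n$). La variation de $V_{m-2}(h)$ sous $h\mapsto t_0 h$ est explicite par la remarque \eqref{remarqueinvx} et introduit une constante absorbable dans $c$, de m�me pour le terme $\log h_{L^2,h^m,\infty}$ contre $h_{L^2,h^m,\omega_0}$.

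Sous l'hypoth�se $h\leq h_{FS}$, je comparerais le premier terme : $\omega_{FS}^n\leq\omega_\infty^n$ entra�ne $\bigl<z^\nu,z^\nu\bigr>_{L^2_{h^m,\omega_{FS}}}\leq\bigl<z^\nu,z^\nu\bigr>_{L^2_{h^m,\infty}}$ pour chaque $\nu$, donc le produit sur $\mathcal{P}_{m-2}$ (qui ne porte que sur une partie des mon�mes, mais c'est justement le bon indice $m-2$ pour $\mathcal{F}$ via $K_{\p^n}=\mathcal{O}(-n-1)$, � ajuster) est domin� par la quantit� correspondante avec $\omega_\infty$. Pour le second terme, j'utiliserais $\widetilde{\mathrm{ch}}(\mathcal{O}(m),h^m,h^m_{FS})=\widetilde{\mathrm{ch}}(\mathcal{O}(m),h^m,h^m_\infty)-\widetilde{\mathrm{ch}}(\mathcal{O}(m),h^m_{FS},h^m_\infty)$, le second morceau �tant une constante ne d�pendant pas de $h$ (absorb�e dans $c$), et le premier se reliant � $\int_{\p^n}\widetilde{\mathrm{ch}}(\mathcal{O}(m),h^m,h^m_\infty)=m^{n+1}\int_{\Delta_n}\check f_h$ par le th�or�me \eqref{bottchernfenchel}. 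Le facteur $\frac{1}{V(\mathcal{O}(m))}=\frac{n!}{m^n}$ contre le facteur $\frac{1}{\binom{n+m-2}{n}}\sim\frac{n!}{m^n}$ devant $V_{m-2}$ doit �tre contr�l� : leur rapport tend vers $1$, d'o� la n�cessit� de prendre $m\geq N$ et d'absorber l'�cart, ainsi que les termes d'ordre inf�rieur en $m$, dans la constante.

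La principale difficult� sera la comptabilit� pr�cise des facteurs polynomiaux en $m$ : il faut v�rifier que, apr�s normalisation, le coefficient de $\int_{\Delta_n}\check f_h$ provenant de $\frac{1}{V(\mathcal{O}(m))}\int\widetilde{\mathrm{ch}}$ co�ncide asymptotiquement avec $\frac{2\,m^{n+1}}{\binom{n+m-2}{n}}$ issu de $\frac{1}{\binom{n+m-2}{n}}V_{m-2}(h)$ — ce qui explique le facteur $2$ dans la d�finition de $V_{\infty,m}$ — et que l'�cart reste born� ind�pendamment de $h$ (en utilisant $\check f_h\geq 0$ sur $\Delta_n$ quand $h\leq h_\infty$, �tabli dans la preuve du th�or�me \eqref{borneVmtheorem}). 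Une fois ces identifications faites, l'in�galit� $\mathcal{F}_{m\omega_0}(u_h)\leq\frac{1}{\binom{n+m-2}{n}}V_{m-2}(h)+c$ r�sulte en regroupant toutes les constantes issues des r�ductions ci-dessus dans un unique $c$ d�pendant seulement de $h_0=h_{FS}$, pourvu que $m\geq N$ pour que les corrections polynomiales soient du bon signe.
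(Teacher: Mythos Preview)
Ton plan contient une vraie lacune dans la premi\`ere \'etape, la r\'eduction \`a $h\leq h_{FS}$ par homoth\'etie. Le scalaire $t_0\in(0,1)$ que tu choisis pour avoir $t_0 h\leq h_{FS}$ d\'epend de $h$~; ce n'est donc pas une constante absorbable dans $c$. Pire, la variation va dans le mauvais sens~: d'apr\`es la remarque~\eqref{remarqueinvx},
\[
V_{\infty,m-2}(t_0 h)=V_{\infty,m-2}(h)+\Bigl(\tbinom{n+m-2}{n}-2\,\frac{(m-2)^{n+1}}{n!}\Bigr)\log t_0,
\]
et pour $m$ grand le crochet est strictement n\'egatif tandis que $\log t_0<0$, donc $V_{\infty,m-2}(t_0 h)>V_{\infty,m-2}(h)$. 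Ainsi, m\^eme si l'in\'egalit\'e est \'etablie pour $t_0 h$, l'invariance de $\mathcal{F}_{m\omega_0}$ donne seulement
\[
\mathcal{F}_{m\omega_0}(u_h)=\mathcal{F}_{m\omega_0}(u_{t_0 h})\leq \frac{1}{\binom{n+m-2}{n}}V_{\infty,m-2}(t_0 h)+c,
\]
ce qui ne permet pas de redescendre \`a $V_{\infty,m-2}(h)$.

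La preuve de l'article \'evite compl\`etement ce probl\`eme~: on ne touche pas \`a $h$. L'ingr\'edient que tu mentionnes au passage, $\omega_{FS}^n\leq t_0\,\omega_\infty^n$, est en fait le seul dont on a besoin pour comparer les normes $L^2$, et ce $t_0$ est une constante \emph{universelle} (elle ne d\'epend que de $\p^n$, pas de $h$)~; c'est elle qui fournit le $\log t_0$ absorb\'e dans $c$. Une fois cette comparaison faite directement, la relation de cocycle pour $\widetilde{\mathrm{ch}}$ et le contr\^ole asymptotique des coefficients $\frac{n!\binom{n+m-2}{n}\,m^{n+1}}{m^n}\leq 2(m-2)^{n+1}$ pour $m\geq N$ se d\'eroulent exactement comme tu l'esquisses. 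Autrement dit, ton plan est essentiellement celui du papier \`a condition de supprimer l'\'etape de scaling de $h$ et de s'appuyer uniquement sur l'in\'egalit\'e de formes volumes, qui suffit.
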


\begin{proof}
Notons qu'il existe donc $N\in \N$ tel que pour tout $m\geq N$, on a
$\frac{n!\binom{n+m-2}{n}m^{n+1}}{m^n}\leq 2(m-2)^{n+1}$. En effet, par la formule de Stirling, on trouve que $
 \lim_{m\mapsto \infty}\frac{n!\binom{n+m-2}{n}}{m^n}=\frac{1}{e^n}$.\\

Comme $\p^n$ est compact, alors il existe $t_0>0$ tel que $\omega_{FS}^n\leq t_0\omega_\infty^n$. Soit $m\geq N$, on a
\begin{align*}
 \frac{1}{\binom{n+m-2}{n}}\log
&\prod_{\nu\in\mathcal{P}_{m-2}}<z^\nu,z^\nu>_{L^2_{h^m,\omega_0}}+\frac{1}{V(\mathcal{O}(m))}\int_{\p^n}\widetilde{\mathrm{ch}}
\bigl(\mathcal{O}(m),h^m,h^m_0\bigr)\\
&\leq  \frac{1}{\binom{n+m-2}{n}}\log
\prod_{\nu\in \mathcal{P}_{m-2}}<z^\nu,z^\nu>_{L^2_{h^m,\infty}}+\frac{1}{V(\mathcal{O}(m))}\int_{\p^n}\widetilde{\mathrm{ch}}
\bigl(\mathcal{O}(m),h^m,h^m_\infty\bigr)\\
&-\frac{1}{V(\mathcal{O}(m))}\int_{\p^n}\widetilde{\mathrm{ch}}
\bigl(\mathcal{O}(m),h^m_0,h^m_\infty\bigr)+\log t_0\\
&\leq \frac{1}{\binom{n+m-2}{n}}\biggl(  \log \prod_{\nu\in
\mathcal{P}_{m-2}}<z^\nu,z^\nu>_{L^2_{h^m,\infty}}+\frac{\binom{n+m-2}{n}m^{n+1}}{\frac{m^n}{n!}}\int_{\p^n
}\widetilde{\mathrm{ch}}\bigl
(\mathcal{O}(1),h,h_\infty \bigr)\biggr)\\
&-\frac{1}{V(\mathcal{O}(m))}\int_{\p^n}\widetilde{\mathrm{ch}}
\bigl(\mathcal{O}(m),h^m_0,h^m_\infty\bigr)+\log t_0\\
&\leq \frac{1}{\binom{n+m-2}{n}}\biggl(  \log \prod_{\nu\in \mathcal{P}_{m-2}}<z^\nu,z^\nu>_{L^2_{h^m,\infty}}+2(m-2)^{n+1}\int_{\p^n}\widetilde{\mathrm{ch}}\bigl
(\mathcal{O}(1),h,h_\infty \bigr)\biggr)\\
&-\frac{1}{V(\mathcal{O}(m))}\int_{\p^n}\widetilde{\mathrm{ch}}
\bigl(\mathcal{O}(m),h^m_0,h^m_\infty\bigr)+\log t_0\\
&=\frac{1}{\binom{n+m-2}{n}}V_{m-2}(h)-\frac{1}{V(\mathcal{O}(m))}\int_{\p^n}\widetilde{\mathrm{ch}}
\bigl(\mathcal{O}(m),h^m_0,h^m_\infty\bigr)+\log t_0.\\
\end{align*}
Comme $\frac{1}{V(\mathcal{O}(m))}\int_{\p^n}\widetilde{\mathrm{ch}}
\bigl(\mathcal{O}(m),h^m_0,h^m_\infty\bigr)\geq 0$, par conséquent:

\[
 \frac{1}{\binom{n+m-2}{n}}\log
\prod_{\nu\in\mathcal{P}_{m-2}}<z^\nu,z^\nu>_{L^2_{h^m,\omega_0}}+\frac{1}{V(\mathcal{O}(m))}\int_{\p^n}\widetilde{\mathrm{ch}}
\bigl(\mathcal{O}(m),h^m,h^m_0\bigr)\leq \frac{1}{\binom{n+m-2}{n}}V_{m-2}(h)+\log t_0.
\]
Donc, en posant $u_h=-\log \frac{h}{h_{FS}}$, on a montré que
\[
\mathcal{F}_{m\omega_0}(u_h)\leq  \frac{1}{\binom{n+m-2}{n}}V_{m-2}(h)+\log t_0\quad \forall\, h\in
\h_0\quad \forall\, m\geq N.
\]

\end{proof}
\begin{Corollaire}
Si $V_{\infty,m}$ est majorée alors $\mathcal{F}_{m\omega_0}$ l'est aussi.
\end{Corollaire}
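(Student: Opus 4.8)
The statement follows from the previous theorem once $\mathcal{F}_{m\omega_0}$ is controlled on arbitrary metrics, and I would argue in two steps. The first, and immediate, step treats $(\s)^n$-invariant metrics: assume $V_{\infty,m}$ is bounded above on $\h_0$, say $V_{\infty,k}\le c_k$ on $\h_0$ for the relevant values of $k$ (by Theorem \ref{borneVmtheorem} this even holds unconditionally when $m$ is large). Then the previous theorem gives, for $m\ge N$ and every $h\in\h_0$,
\[
\mathcal{F}_{m\omega_0}(u_h)\ \le\ \frac{1}{\binom{n+m-2}{n}}\,V_{m-2}(h)+c\ \le\ \frac{c_{m-2}}{\binom{n+m-2}{n}}+c\ =:\ B_m\ <\ \infty ,
\]
with $u_h=-m\log\frac{h}{h_{FS}}$ and $c$ the constant of that theorem. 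Since $\mathcal{F}_{m\omega_0}$ is invariant under addition of constants, $B_m$ bounds $\mathcal{F}_{m\omega_0}$ on the whole set $\{u_h:h\in\h_0\}$ of $(\s)^n$-invariant potentials of $\mathcal{H}_{m\omega_0}$. If the conclusion is only sought on the invariant locus, this already concludes.

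To reach all of $\mathcal{H}_{m\omega_0}$ I would symmetrise over the compact torus. For $u\in\mathcal{H}_{m\omega_0}$ and $\theta\in(\s)^n$ put $u_\theta(x):=u(\theta\cdot x)$ and $\bar u:=\int_{(\s)^n}u_\theta\,d\theta$ for the Haar probability measure. As $(\s)^n$ acts holomorphically and fixes $\omega_0$, one has $dd^c u_\theta+m\omega_0=\theta^\ast(dd^c u+m\omega_0)>0$, and the Haar average of positive $(1,1)$-forms stays positive, so $\bar u\in\mathcal{H}_{m\omega_0}$ and is $(\s)^n$-invariant; hence $\mathcal{F}_{m\omega_0}(\bar u)\le B_m$ by the first step. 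Moreover $\mathcal{E}_{m\omega_0}$ and $\mathcal{L}_{m\omega_0}$ are invariant under the $(\s)^n$-action (the metric induced on $K_{\p^n}$ by $\omega_0$ and the canonical $L^2$-pairing of sections of $\mathcal{O}(m)\otimes K_{\p^n}$ being invariant), so $\mathcal{E}_{m\omega_0}(u_\theta)=\mathcal{E}_{m\omega_0}(u)$ and $\mathcal{L}_{m\omega_0}(u_\theta)=\mathcal{L}_{m\omega_0}(u)$ for every $\theta$. The corollary thus reduces to proving $\mathcal{F}_{m\omega_0}(u)\le\mathcal{F}_{m\omega_0}(\bar u)$.

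This last inequality is the step I expect to be the main obstacle. Jensen's inequality together with the concavity of the energy functional $\mathcal{E}_{m\omega_0}$ along affine segments yields $\mathcal{E}_{m\omega_0}(\bar u)\ge\int_{(\s)^n}\mathcal{E}_{m\omega_0}(u_\theta)\,d\theta=\mathcal{E}_{m\omega_0}(u)$, but the variation of $\mathcal{L}_{m\omega_0}$ under averaging points the other way along affine segments, so one cannot conclude from a single concavity statement: $\mathcal{E}_{m\omega_0}-\mathcal{L}_{m\omega_0}$ is concave only along Mabuchi geodesics --- along which $\mathcal{E}_{m\omega_0}$ is affine and $\mathcal{L}_{m\omega_0}$ is convex (Berndtsson) --- whereas $\bar u$ is not the geodesic average of the $u_\theta$. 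One therefore has to route the reduction through a finer argument, e.g.\ an envelope/Legendre comparison in the spirit of the operator $P[h]$ recalled just before Section 3, or a toric symmetrisation principle to the effect that $\sup_{\mathcal{H}_{m\omega_0}}\mathcal{F}_{m\omega_0}$ is already attained on the $(\s)^n$-invariant locus (cf.\ \cite{Berman}); granting this, the corollary is immediate from the first step.
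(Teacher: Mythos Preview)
Your first step is exactly the paper's proof: the paper writes a single sentence, ``Cela r\'esulte directement du th\'eor\`eme pr\'ec\'edent'', and stops. The corollary is meant on the $(\mathbb{S}^1)^n$-invariant locus --- $V_{\infty,m}$ is only defined on $\h_0$, the previous theorem is stated for $h\in\h_0$, and the very next corollary in the paper makes the restriction to $\mathcal{H}_{m\omega_0,0}$ explicit. So when you write ``If the conclusion is only sought on the invariant locus, this already concludes'', you have identified precisely the intended scope, and your first paragraph is the whole argument.

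Your second step --- pushing the bound from invariant to arbitrary potentials by torus averaging --- goes beyond what the paper claims or proves. You are right that it does not follow from affine concavity alone (the $\mathcal{L}$-term moves the wrong way under Haar averaging), and that one would need a geodesic-convexity or envelope argument to close it; the paper does not attempt this, and its ``version faible'' of Berman's result (the next corollary) is stated only on $\mathcal{H}_{m\omega_0,0}$. So there is no gap in your proof of the paper's statement; the unresolved part is simply extra.
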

\begin{proof}
Cela résulte directement du théorème précédent.
\end{proof}

Pour tout $m\in \N_{\geq 1}$, on note par $\mathcal{H}_{m\omega_0,0}$ le sous ensemble de $\mathcal{H}_{m\omega_0}$ qui correspond aux
métriques positives $\cl$ invariantes par $(\s)^n$ sur $\mathcal{O}(m)$. On retrouve  une version faible  du \eqref{BermanTheorem}:
\begin{Corollaire}
 Il existe $N\in \N$ tel que
\[
 \mathcal{F}_{m\omega_0}(u)\leq c''_m\quad \forall \, u\in \mathcal{H}_{m\omega_0,0} \quad\forall \, m\geq
N,
\]
où $c''_m$ est une constante réelle qui dépend uniquement de $m$ et $\omega_0$.
\end{Corollaire}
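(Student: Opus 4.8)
Le but est de déduire la majoration de $\mathcal{F}_{m\omega_0}$ sur $\mathcal{H}_{m\omega_0,0}$ de la majoration de $V_{\infty,m}$ établie au théorème \eqref{borneVmtheorem}, en passant par l'inégalité du théorème précédent. La difficulté principale est que le théorème précédent fournit $\mathcal{F}_{m\omega_0}(u_h)\leq \frac{1}{\binom{n+m-2}{n}}V_{m-2}(h)+\log t_0$ uniquement pour $h\in \h_0$, c'est-à-dire pour des métriques de classe $\cl$ \emph{admissibles et invariantes} sur $\mathcal{O}(1)$, alors qu'on veut conclure pour toute métrique de $\mathcal{H}_{m\omega_0,0}$, qui correspond aux métriques positives $\cl$ invariantes sur $\mathcal{O}(m)$. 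Il faut donc d'abord vérifier que ces deux points de vue coïncident : à toute $u\in \mathcal{H}_{m\omega_0,0}$ correspond une métrique positive invariante sur $\mathcal{O}(m)$, donc une métrique positive invariante sur $\mathcal{O}(1)$ en prenant la racine $m$-ième, qui est automatiquement admissible (car $\cl$ et positive), donc un élément de $\h_0$.

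Une fois cette identification faite, la preuve est essentiellement mécanique. Premièrement, je rappellerais que le théorème précédent fournit un entier $N\in\N$ et la constante $t_0>0$ (dépendant de $\omega_0$ via la comparaison $\omega_{FS}^n\leq t_0\,\omega_\infty^n$) tels que
\[
\mathcal{F}_{m\omega_0}(u_h)\leq \frac{1}{\binom{n+m-2}{n}}V_{m-2}(h)+\log t_0,\quad \forall\, h\in \h_0,\ \forall\, m\geq N,
\]
où $u_h=-m\log\frac{h}{h_{FS}}$. Deuxièmement, j'appliquerais le théorème \eqref{borneVmtheorem} à l'indice $m-2$ : pour $m$ assez grand (quitte à agrandir $N$ de sorte que $m-2\geq 2$ et que l'on soit dans le régime $m-2\gg 1$ du théorème), on a $V_{m-2}(h)\leq c_{m-2}$ pour toute $h\in \h_0$, sans hypothèse $h\leq h_\infty$. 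En combinant,
\[
\mathcal{F}_{m\omega_0}(u_h)\leq \frac{c_{m-2}}{\binom{n+m-2}{n}}+\log t_0,\quad \forall\, h\in \h_0,\ \forall\, m\geq N.
\]

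Troisièmement, il reste à observer que lorsque $h$ parcourt $\h_0$, la quantité $u_h=-m\log\frac{h}{h_{FS}}$ parcourt exactement $\mathcal{H}_{m\omega_0,0}$ : en effet $\omega_{u_h}=m\,c_1(\mathcal{O}(1),h)>0$ puisque $h$ est une métrique positive $\cl$ sur $\mathcal{O}(1)$, et $u_h$ est invariante par $(\s)^n$ puisque $h$ et $h_{FS}$ le sont ; réciproquement toute $u\in\mathcal{H}_{m\omega_0,0}$ s'écrit ainsi en posant $h=h_{FS}\,e^{-u/m}$. Je poserais alors $c''_m:=\frac{c_{m-2}}{\binom{n+m-2}{n}}+\log t_0$, qui ne dépend que de $m$ et de $\omega_0$ (via $t_0$ et via $c_{m-2}$), ce qui donne
\[
\mathcal{F}_{m\omega_0}(u)\leq c''_m\quad \forall\, u\in\mathcal{H}_{m\omega_0,0},\ \forall\, m\geq N,
\]
et achève la démonstration. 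Le seul point demandant un minimum de soin est la compatibilité des domaines (métriques sur $\mathcal{O}(m)$ versus racines $m$-ièmes dans $\h_0$) et le choix de $N$ assurant simultanément $m-2\geq 2$, la validité de l'inégalité du théorème précédent, et le régime $m\gg 1$ du théorème \eqref{borneVmtheorem} ; tout le reste se réduit à enchaîner deux inégalités déjà établies.
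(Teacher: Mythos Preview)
Your proposal is correct and follows essentially the same approach as the paper's own proof: apply the inequality of the preceding theorem, then invoke th\'eor\`eme \eqref{borneVmtheorem} at the index $m-2$ to bound $V_{m-2}(h)$ by $c_{m-2}$, and set $c''_m=\frac{c_{m-2}}{\binom{n+m-2}{n}}+\log t_0$. You are in fact more explicit than the paper about the bijection $h\mapsto u_h$ between $\h_0$ and $\mathcal{H}_{m\omega_0,0}$ and about enlarging $N$ so that th\'eor\`eme \eqref{borneVmtheorem} applies in its unrestricted form, points the paper leaves implicit.
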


\begin{proof}

On a montré que, dans la proposition précédente que:
\[
\mathcal{F}_{m\omega_0}(u_h)\leq  \frac{1}{\binom{n+m-2}{n}}V_{m-2}(h)+\log t_0\quad \forall\, h\in
\h_0\quad \forall\, m\geq N.
\]
D'après \eqref{borneVmtheorem}, on  déduit que
\[\mathcal{F}_{m\omega_0}(u_h)\leq \frac{1}{\binom{n+m-2}{n}}c_{m-2}+\log t_0.
\]

\end{proof}

\bibliographystyle{plain}
\bibliography{biblio}

\vspace{1cm}

\begin{center}
{\sffamily \noindent National Center for Theoretical Sciences, (Taipei Office)\\
 National Taiwan University, Taipei 106, Taiwan}\\

 {e-mail}: {hajli@math.jussieu.fr}

\end{center}

\end{document}